\documentclass[11pt]{amsart}
\usepackage{amssymb, amstext, amscd, amsmath}
\usepackage{mathtools, xypic, paralist, color, rotating}
\usepackage{verbatim}
\usepackage{enumitem}
\usepackage{float}
\usepackage{setspace}
\usepackage{pifont}

\usepackage{tikz}
\usetikzlibrary{fit,positioning,arrows,automata,calc}
\tikzset{
  main/.style={circle, minimum size = 30pt, thick, draw = black!80, node distance = 10mm},
  connect/.style={-latex, thick},
  box/.style={rectangle, draw = white!100}
}
\usepackage{tikz-cd}

\floatstyle{boxed} 
\restylefloat{figure}

\numberwithin{equation}{section}
\usepackage[a4paper]{geometry}
\usepackage{quoting}
\quotingsetup{vskip=.1in}
\quotingsetup{leftmargin=.6in} 
\quotingsetup{rightmargin=.6in}

\let\OLDthebibliography\thebibliography
\renewcommand\thebibliography[1]{
  \OLDthebibliography{#1}
  \setlength{\parskip}{0pt}
  \setlength{\itemsep}{2pt plus 0.5ex}
}

\makeatletter
\def\@cite#1#2{{\m@th\upshape\bfseries%
[{#1\if@tempswa{\m@th\upshape\mdseries, #2}\fi}]}}
\makeatother
\theoremstyle{plain}
\newtheorem{theorem}{Theorem}[section]
\newtheorem{corollary}[theorem]{Corollary}
\newtheorem{proposition}[theorem]{Proposition}
\newtheorem{lemma}[theorem]{Lemma}
\newtheorem{thmintro}{Theorem}

\newtheorem{corintro}[thmintro]{Corollary}
\theoremstyle{definition}
\newtheorem{definition}[theorem]{Definition}
\newtheorem{example}[theorem]{Example}

\newtheorem{remark}[theorem]{Remark}

\newtheorem{question}{Question}

\theoremstyle{remark}

\mathtoolsset{centercolon}
  \newcommand{\A}{{\mathcal{A}}}
  \newcommand{\B}{{\mathcal{B}}}
  \newcommand{\C}{{\mathcal{C}}}
  \newcommand{\D}{{\mathcal{D}}}
  \newcommand{\E}{{\mathcal{E}}}
  \newcommand{\F}{{\mathcal{F}}}
  \newcommand{\G}{{\mathcal{G}}}

  \newcommand{\I}{{\mathcal{I}}}
  \newcommand{\J}{{\mathcal{J}}}
  \newcommand{\K}{{\mathcal{K}}}
\renewcommand{\L}{{\mathcal{L}}}

\renewcommand{\P}{{\mathcal{P}}}

\renewcommand{\S}{{\mathcal{S}}}
  \newcommand{\T}{{\mathcal{T}}}

  \newcommand{\X}{{\mathcal{X}}}
  \newcommand{\Y}{{\mathcal{Y}}}
  
\newcommand{\eps}{\varepsilon}
\def\al{\alpha}
\def\be{\beta}

\def\de{\delta}
\def\ze{\zeta}
\def\io{\iota}
\def\ka{\kappa}
\def\la{\lambda}

\def\om{\omega}

\def\si{\sigma}

\newcommand\vphi{\varphi}

\newcommand{\bC}{\mathbb{C}}

\newcommand{\bF}{\mathbb{F}}

\newcommand{\bN}{\mathbb{N}}

\newcommand{\bZ}{\mathbb{Z}}
\newcommand{\bR}{\mathbb{R}}

\newcommand{\Bx}{{\mathbf{x}}}
\newcommand{\By}{{\mathbf{y}}}
\newcommand{\Bz}{{\mathbf{z}}}
\newcommand{\AND}{\text{ and }}

\newcommand{\OR}{\text{ or }}
\newcommand{\foral}{\text{ for all }}
\newcommand{\qand}{\quad\text{and}\quad}

\newcommand{\qfor}{\quad\text{for}\quad}

\newcommand{\ca}{\mathrm{C}^*}

\newcommand{\wt}{\widetilde}
\newcommand{\wh}{\widehat}

\newcommand{\cov}{\operatorname{c}}

\newcommand{\ev}{\operatorname{ev}}
\newcommand{\fock}{\operatorname{F}}

\newcommand{\id}{{\operatorname{id}}}

\newcommand{\mt}{\emptyset}

\newcommand{\spn}{\operatorname{span}}

\newcommand{\sca}[1]{\left\langle#1\right\rangle} 
\newcommand{\bo}[1]{\mathbf{#1}} 

\newcommand{\tes}[7]{
	\xymatrix@C=2cm@R=1.5cm{
		K_0\left(#1\right) \ar[r]^{#2} & K_0\left(#3\right) \ar[r]^{#4} & K_0\left(#5\right) \ar[d] \\
		K_1\left(#5\right) \ar[u] & K_1\left(#3\right) \ar[l]^{#7} & K_1\left(#1\right) \ar[l]^{#6}
	}
}

\addtocontents{toc}{\protect\setcounter{tocdepth}{1}}

\begin{document}

\title[Finite-dimensional approximations for C*-algebras of product systems]{Finite-dimensional approximations for C*-algebras of product systems}

\author[E.T.A. Kakariadis]{Evgenios T.A. Kakariadis}
\address{Department of Mathematics\\ National and Kapodistrian University of Athens\\ Athens\\ 157 84\\ Greece}
\email{evkakariadis@math.uoa.gr}

\author[I.A. Paraskevas]{Ioannis Apollon Paraskevas}
\address{Department of Mathematics\\ National and Kapodistrian University of Athens\\ Athens\\ 157 84\\ Greece}
\email{ioparask@math.uoa.gr}

\thanks{2020 {\it Mathematics Subject Classification.} 46L05, 46L06, 46L08, 47L55}

\thanks{{\it Key words and phrases:} Product systems, Fock covariance, nuclearity, exactness.}

\begin{abstract}
We study C*-algebras generated by representations of product systems over unital subsemigroups of amenable discrete groups.
In the first part, we show that exactness of the generated C*-algebra is equivalent to exactness of the cores over the constructible ideals.
Nuclearity of the generated C*-algebra is equivalent to nuclear embeddability of the cores in the fixed point algebra.
In the second part, we further study injective equivariant Fock covariant representations, and whether the properties of the cores are inherited from properties of the coefficient C*-algebra, as in the case of compactly aligned saturated product systems over right LCM semigroups.
If the semigroup is finitely aligned and the product system is saturated and compactly aligned, then exactness of the cores is equivalent to exactness of the coefficient C*-algebra.
If in addition the semigroup is strongly finitely aligned, then nuclear embeddability of the cores is equivalent to nuclear embeddability of the coefficient C*-algebra.
These results do not hold without the compact alignment hypothesis, even for right LCM semigroups.
\end{abstract}

\maketitle

\section{Introduction}

\subsection{Framework}

The theory of C*-algebras accommodates a variety of ways for constructing an analytical object from geometric data, widely interpreted as actions on a (possibly non-commutative) state space.
For these models, the natural question is how properties of the data impact the structure of the generated C*-algebras, and vice versa.
A two-way passage allows us to construct examples and counterexamples to test hypotheses, by referring to the original structure and invoking combinatorial arguments, rather than repeating analytic arguments.
In this respect, graphs and semigroups have been at the core of this investigation since the 1970s.

In the current project we study the properties of exactness and nuclearity that sit at the core of the classification programme of C*-algebras, for a class of structures that encompasses numerous constructions and possibly irreversible transformations.
Our focal objects are product systems over a unital subsemigroup of a discrete group, and the analytical objects arise from representations that respect the Fock space model.
As these objects cover the case when the semigroup is just a group, the finite-dimensional approximation properties we study reside in the context of exact or amenable groups.

A (concrete) product system $X$ over a unital subsemigroup $P$ in a discrete group $G$ consists of a family $\{X_p\}_{p \in P}$ of norm-closed operator spaces in a common $\B(H)$ such that $X_{e_P}:=A$ is a C*-algebra, and
\[
X_p \cdot X_q \subseteq X_{pq}
\textup{ and } 
X_p^* \cdot X_{pq} \subseteq X_q
\textup{ for all }
p,q \in P,
\]
where we write $e_P = e_G$ for the common unit of $P$ and $G$.
The product system $X$ is called \emph{saturated} if in addition the closed linear span of $X_p \cdot X_q$ equals $X_{pq}$ for all $p,q\in P$.
From the definition it follows that each \emph{fibre} $X_p$ becomes a C*-correspondence over the \emph{coefficient algebra} $A$.
The case of $P = \bZ_+$ has been extensively studied, propelled by Katsura's Gauge-Invariant Uniqueness Theorems \cite{Kat04, Kat07}, and culminating in a result of Brown--Tikuisis--Zelenberg \cite{BTZ18} about their classifiability (a hopefully extensive but arguably non-exhaustive survey for $P = \bZ_+$ can be found in \cite{KP26}).
Moreover, product systems extend the notion of Fell bundles, which arise when $P = G$.
Product systems were introduced by Arveson \cite{Arv89} and later put in context by Dinh \cite{Din91} for discrete subsemigroups of $\bR_+$.
They have been under considerable study since then, with major advancements over the past decade, offering a unifying language for semigroup operator algebras (when $X_p = \bC$ for all $p \in P$) and operator algebras of higher-rank graphs and semigroup transformations.

The first quantisation of a product system is realised through left creation operators.
Given $X$ we construct the Fock space $\F X := \sum^\oplus_{p \in P} X_p$ as a right Hilbert $A$-module and we define the \emph{Fock representation} $\la \colon X \to \L(\F X)$ by
\[
\la_p(\xi_p) \eta_q := \xi_p \eta_q \in X_{pq},
\foral p,q \in P, \xi_p \in X_p, \eta_q \in X_q.
\]
The C*-algebra generated by $\{\la_p(X_p)\}_{p \in P}$ is denoted by $\T_\la(X)$ and is referred to, herein, as the \emph{Fock C*-algebra}.
The group embeddability of $P$ in $G$ defines a \emph{coaction} of $G$ on $\T_\la(X)$ satisfying
\[
\de_\la \colon \T_\la(X) \to \T_\la(X) \otimes \ca(G); \ \de_\la(\la_p(\xi_p)) = \la_p(\xi_p) \otimes u_p,
\]
and thus induces a Fell bundle structure
\[
\F\C X:=\{\F\C X_g\}_{g \in G}
\textup{ such that }
\F\C X_g := \{c \in \T_\la(X) \mid \de_\la(c) = c \otimes u_g\},
\]
identified in \cite{DKKLL22}.
As the coaction on $\T_\la(X)$ is normal, the Fock C*-algebra coincides with the reduced C*-algebra of $\F\C X$.

Modelled on $\la$, we say that a collection $t = \{t_p\colon X_p \to\B(H')\}_{p \in P}$ of linear maps is a \emph{representation} of $X$ if $t_{e_P}$ is a $*$-representation, and moreover
\[
t_p(\xi_p) t_q(\eta_q) = t_{pq}(\xi_p \eta_q)
\textup{ and }
t_p(\xi_p)^* t_{pq}(\eta_{pq}) = t_q(\xi_p^* \eta_{pq}),
\]
for all $p, q \in P$, $\xi_p \in X_p$, $\eta_q \in X_q$, $\eta_{pq} \in X_{pq}$.
We write $\T(X)$ for the universal C*-algebra with respect to representations of $X$, and $\T_{\cov}^{\fock}(X)$ for the universal C*-algebra of $\F\C X$. 
We say that a representation $t$ is \emph{Fock covariant} if the canonical map $t_\ast \colon \T(X) \to \ca(t)$ factors through $\T_{\cov}^{\fock}(X)$, or equivalently if $t$ promotes to a representation of the Fell bundle $\F\C X$.
The main question we wish to address is the following.

\begin{question}\label{Qn:main}
Under which conditions on $P$ and $X$ is exactness or nuclearity of the C*-algebra of a Fock covariant representation induced by exactness or nuclear embeddability of $A$?
\end{question}

The question refers to Fock covariant representations as it is known to fail for other representations, e.g., the universal C*-algebra of the representations of $X$ when $P = \bZ_+^2$ and $X_p = \bC$ for all $p \in P$ fails to be nuclear \cite{Mur96}.
Furthermore we consider \emph{injective} representations of $X$, i.e., when $t_{e_P}$ is injective, so that the original data from $X$ remain intact.
A further useful reduction is to ask for $t$ to be \emph{equivariant}, i.e., when the coaction of the \emph{universal Toeplitz C*-algebra} $\T(X)$ descends to $\ca(t)$, so that we can make use of the fixed point algebra $[\ca(t)]_{e_P}$.

\subsection{Reductions of the problem and compact alignment}

Question~\ref{Qn:main} has been considered in several contexts and settings.
For $P = \bZ_+$ this was fully resolved in \cite{Kat04}.
However, going beyond this point appears to require extra hypotheses on $X$.
The problem lies in how the building blocks of the fixed point algebra interact.

Such an obstruction to the study of the structure of Fock covariant representations was already observed by Fowler \cite{Fow02}.
In his work, product systems are introduced abstractly through a family 
\[
X = \left( \{X_p\}_{p \in P}, \{u_{p,q} \colon X_p \otimes_A X_q \to X_{pq} \}_{p, q \in P} \right)
\]
of C*-correspondences over a common C*-algebra $X_{e_P} := A$ and a family of unitaries $\{u_{p,q}\}_{p,q \in P}$ that respects the associativity of the stabilised tensor product triples.
This definition is equivalent to the concrete one modulo non-degeneracy, see \cite[Subsection 2.5]{KP25}.
Fowler introduced the notion of \emph{compact alignment} when $(G,P)$ is a quasi-lattice ordered group.
This notion borrows from the Nica covariance for semigroups \cite{Nic92}, and reflects finite alignment for higher-rank graphs introduced by Raeburn--Sims--Yeend \cite{RSY04}: the product system of a higher-rank graph is compactly aligned precisely when the graph is finitely aligned \cite{RS05}. It is automatically satisfied when the product system is \emph{proper}, i.e., when the left action on each fibre is by compacts.
The quasi-lattice ordered group assumption was further relaxed in a series of works, and eventually the setting of compact alignment was transferred by Kwa\'{s}niewski--Larsen \cite{KL19a, KL19b} to the class of right LCM semigroups.

One approach towards Question~\ref{Qn:main} has been by means of groupoid models.
Albandik--Meyer \cite{AM15} studied proper product systems over Ore monoids and obtained nuclearity of the boundary quotient from that of the coefficient algebra. 
Such a characterisation of nuclearity for the Fock C*-algebra of a compactly aligned product system over a quasi-lattice ordered group is also obtained by Rennie--Robertson--Sims \cite{RRS17}, modulo amenability of the constructed groupoid.
More recently, Hossain--Sundar \cite{HS26} used an $E_0$-semigroup and the Wiener--Hopf groupoid model of Muhly--Renault \cite{MR82} to obtain equivalence of exactness and nuclearity of the Fock C*-algebra (therein called the reduced C*-algebra) with that of the coefficient algebra when $P$ is an Ore monoid with an order unit, and under suitable hypotheses that allow the product system to be encoded as an injective dynamical system (see also \cite{Kak17} for $\bZ_+^d$-dynamical systems).

Another approach towards Question~\ref{Qn:main} has been through an analysis of the cores of the fixed point algebra, more closely related to Li's theory of semigroup C*-algebras \cite{Li12}.
For a semigroup $P$ we write $\J$ for the smallest family of right ideals of $P$ containing $\{\mt, P\}$ that is closed under left multiplication, taking pre-images under left multiplication, and finite intersections.
For a representation $t$ of a saturated product system $X$ and an $\Bx \in \J$ we write $\bo{K}_{\Bx, t_\ast}$ for the closed linear span of the sets of the form
\[
 t_{p_1}(X_{p_1})^* t_{q_1}(X_{q_1}) \cdots
t_{p_n}(X_{p_n})^* t_{q_n}(X_{q_n}) \text{ such that } \Bx = q_n^{-1} p_n \dots q_1^{-1} p_1 P, p_1^{-1} q_1 \cdots p_n^{-1} q_n = e_P.
\]
If $t$ inherits a $G$-coaction, then $[\ca(t)]_{e_P}$ arises as the inductive limit of finite sums of these $\bo{K}$-cores over finite $\cap$-closed subsets of $\J$, see \cite[Subsection 2.4]{KP25}.
Therefore, we can address Question~\ref{Qn:main} in two independent steps.

\begin{question}\label{Qn:main2}
Let $t$ be a Fock covariant representation of a product system $X$ over $P$.
\begin{enumerate}
\item Under which conditions on $P$ and $X$ is exactness or nuclearity of $\ca(t)$ equivalent to exactness or nuclear embeddability of the cores $\bo{K}_{\Bx, t_\ast}$ for all $\Bx \in \J$? 
\item Under which conditions on $P$ and $X$ is exactness or nuclear embeddability of the cores $\bo{K}_{\Bx, t_\ast}$ for all $\Bx \in \J$ equivalent to exactness or nuclear embeddability of the coefficient C*-algebra $A$? 
\end{enumerate}
\end{question}

This route was successfully taken in \cite{Kat04} for $P = \bZ_+$, with a two-out-of-three lemma for item (i) above.
Such a lemma was extended in \cite{Kak20} and employed therein for compactly aligned product systems over $\bZ_+^d$.
It was further used in \cite{KKLL23} to settle the case of saturated compactly aligned product systems over a right LCM semigroup $P$ that is controlled by a right LCM subsemigroup $\P$ of an amenable group $\G$.
In order to appreciate the strength of compact alignment, we recall that when $P$ is a right LCM semigroup then $\J = \{pP \mid p \in P\} \cup \{\mt\}$; if $X$ is in addition compactly aligned, then $\bo{K}_{pP, t_\ast} = [t_q(X_q) t_q(X_q)^*]$ for any $q \in P$ with $qP = pP$ and every Fock covariant representation $t$ (note here the independence from the writing of the principal ideal, see \cite{DKKLL22}).
Item (ii) of Question~\ref{Qn:main2} is then resolved in \cite{Kak20, KKLL23, Kat04} by employing a linking algebra argument from \cite{Kat04}.
A more general setting of controlled maps was used by an Huef--Nucinkis--Sehnem--Yang \cite{HNSY21} for nuclearity results of semigroup C*-algebras.

Katsoulis--Laca--Sehnem \cite{KLS25} further studied Question~\ref{Qn:main2} when the range of each left action contains all compacts, and an assumption on the support of each $\bo{K}_{\bo{x}, \la_\ast}$ is satisfied, see \cite[Equation (5.4)]{KLS25}.
These conditions yield exactness or nuclearity of each $\bo{K}_{\bo{x}, \la_\ast}$ from the coefficient algebra.
A two-out-of-three argument then gives nuclearity of $\T_\la(X)$.
Assuming further a linear independence condition on the $\bo{K}$-cores of $\la$, they obtain exactness of $\T_\la(X)$.
Their results apply when each $X_p$ is \emph{full}, i.e., $[X_p^*\cdot X_p] = X_{e_P}$, and $P$ is abelian, or the $ax+b$-monoid of an integral domain, or the Baumslag--Solitar monoid $\mathrm{BS}^+(m,n)$.

\subsection{Main results}

In the first part of this work we settle item (i) of Question~\ref{Qn:main2} by assuming just amenability of $G$; the saturation hypothesis on the product system is not required for this part.
In this case the canonical $*$-epimorphism $\T_{\cov}^{\fock}(X) \to \T_\la(X)$ is a $*$-isomorphism.

\begin{thmintro}[Theorem~\ref{T:Kcores} and Corollary~\ref{C:Excores-1}]
Let $P$ be a unital subsemigroup of an amenable discrete group $G$ and let $X$ be a product system over $P$.
Let $t$ be an equivariant representation of $X$.
Then $\ca(t)$ is exact if and only if $\bo{K}_{\Bx,t_\ast}$ is exact for every $\Bx \in \J$.
\end{thmintro}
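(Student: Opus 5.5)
The plan is to pass through the fixed point algebra. Since $t$ is equivariant, $\ca(t)$ carries a coaction of $G$ that is compatible with the coaction $\de$ of $\T(X)$. Hence $\ca(t)$ is the cross-sectional C*-algebra of the Fell bundle $\{[\ca(t)]_g\}_{g \in G}$, and the unit fibre is $[\ca(t)]_{e_P}$.

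\textbf{Step 1: reduce to the fixed point algebra.} Because $G$ is amenable, the conditional expectation $E \colon \ca(t) \to [\ca(t)]_{e_P}$ is faithful. The full and reduced cross-sectional algebras of the bundle coincide, so $\ca(t) \cong \ca_r(\{[\ca(t)]_g\})$. We then use the known fact for Fell bundles over exact groups (Exel--Ng): the reduced cross-sectional algebra is exact if and only if its unit fibre is exact. Amenable groups are exact, so $\ca(t)$ is exact if and only if $[\ca(t)]_{e_P}$ is exact. The forward direction here is just passage to a C*-subalgebra. The reverse direction can be argued directly if needed: $\ca(t) \otimes \B(\ell^2(G))$ contains $\ca(t)\rtimes_{\de} G$, which is Morita equivalent (via the normal coaction and Katayama duality) to $\ca(t)$. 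Alternatively, exactness of $[\ca(t)]_{e_P} \otimes_{\max} \ca(G)$-type crossed products follows from amenability.

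\textbf{Step 2: reduce from the fixed point algebra to the cores.} By \cite[Subsection 2.4]{KP25}, $[\ca(t)]_{e_P}$ is the inductive limit, over finite $\cap$-closed subsets $\F \subseteq \J$, of the C*-algebras $B_\F := \sum_{\Bx \in \F} \bo{K}_{\Bx,t_\ast}$. Exactness passes to inductive limits, and each core $\bo{K}_{\Bx,t_\ast}$ is a subalgebra of $[\ca(t)]_{e_P}$. So it suffices to show the following: if every $\bo{K}_{\Bx,t_\ast}$ is exact, then every $B_\F$ is exact.

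We argue by induction on $|\F|$, using the ideal structure. For $\Bx,\By \in \J$ we have $\bo{K}_{\Bx,t_\ast}\cdot \bo{K}_{\By,t_\ast} \subseteq \bo{K}_{\Bx\cap\By,t_\ast}$. Choose $\Bx_0 \in \F$ that is maximal, i.e.\ not contained in any other member of $\F$. Set $\F' = \F \setminus \{\Bx_0\}$, which remains $\cap$-closed. Then $B_{\F'}$ is an ideal in $B_\F$, and
\[
B_\F / B_{\F'} \cong \bo{K}_{\Bx_0,t_\ast} / (\bo{K}_{\Bx_0,t_\ast} \cap B_{\F'}),
\]
a quotient of an exact C*-algebra, hence exact. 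Exactness is closed under extensions when the quotient is exact, since every quotient of an exact algebra is exact and locally split extensions hold by Kirchberg's results on extensions of exact algebras by exact ideals with exact quotient. Hence $B_\F$ is exact by induction.

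\textbf{Main obstacle.} The delicate point is the extension step. For nuclear algebras, extensions are automatic. For exact algebras, however, an extension of exact by exact need not be exact unless it is locally split. I would first try to exploit the concrete form $B_\F = B_{\F'} + \bo{K}_{\Bx_0,t_\ast}$, with $\bo{K}_{\Bx_0,t_\ast}$ exact. This gives a c.p.\ splitting at the level of a subalgebra surjecting onto the quotient, and locally liftable quotients by exact ideals preserve exactness by Kirchberg--Wassermann. The multiplication rule for cores also needs checking: I must confirm that $\Bx_0$ maximal makes $B_{\F'}$ an ideal. This uses that products land in cores indexed by intersections, which lie in $\F'$ whenever one factor is from $\F'$.
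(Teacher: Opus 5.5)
Your proposal is correct and follows the paper's route. Step~1 is Corollary~\ref{C:Excores-1}: amenability identifies $\ca(t)$ with the reduced cross-sectional algebra of $\{[\ca(t)]_g\}_{g\in G}$, and Exel's exactness result for Fell bundles over exact groups does the rest. Step~2 is Theorem~\ref{T:Kcores}, and its extension step is handled exactly as in your ``main obstacle'' paragraph, which is the paper's Theorem~\ref{T:Exsum}. Since $\bo{K}_{\Bx_0,t_\ast}$ is exact, its quotient map onto $\bo{K}_{\Bx_0,t_\ast}/(\bo{K}_{\Bx_0,t_\ast}\cap B_{\F'}) \cong B_\F/B_{\F'}$ is locally split. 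These lifts land in $\bo{K}_{\Bx_0,t_\ast}\subseteq B_\F$, and a locally split extension of an exact quotient by an exact ideal is exact.

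Do drop the earlier sentence in Step~2 claiming that exactness is closed under extensions with exact quotient. It is false, and its justification is circular, since it presupposes exactness of $B_\F$. Also, in your alternative argument for Step~1, the coaction crossed product is Morita equivalent to the unit fibre $[\ca(t)]_{e_P}$, not to $\ca(t)$.
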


This follows from an analysis of the cores as well as the connection of exactness with locally split (or locally liftable) extensions from the general C*-theory.
We break our arguments into several parts to make full use of each hypothesis.
In particular for the case of Fock covariant representations we have the following corollary.

\begin{corintro}[Corollary~\ref{C:Excores-1} and Corollary~\ref{C:Excores-2}]
Let $P$ be a unital subsemigroup of an amenable discrete group $G$ and let $X$ be a product system over $P$.
Then the following are equivalent:
\begin{enumerate}
\item $\T_\la(X)$ is exact.
\item $\ca(t)$ is exact for an injective Fock covariant representation $t$ of $X$. 
\item $\bo{K}_{\Bx, t_\ast}$ is exact for an injective Fock covariant representation $t$ of $X$, for all $\Bx \in \J$. 
\end{enumerate}
\end{corintro}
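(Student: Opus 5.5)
The plan is to derive the corollary from the preceding theorem (Theorem~\ref{T:Kcores}), applied to suitable equivariant representations, together with two standard facts. Exactness passes to C*-subalgebras. For amenable $G$, the canonical map $\T_{\cov}^{\fock}(X) \to \T_\la(X)$ is a $*$-isomorphism.

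First, observe that every injective Fock covariant representation $t$ is equivariant. Since $t$ is Fock covariant, $t_\ast$ factors through $\T_{\cov}^{\fock}(X) \cong \T_\la(X)$. Since $G$ is amenable, the Fell bundle $\F\C X$ has full and reduced cross-sectional algebras that agree. Its unit fibre is $[\T_\la(X)]_{e}$, the fixed point algebra, an inductive limit of sums of cores $\bo{K}_{\Bx,\la_\ast}$.

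The key step is that the induced map $\T_\la(X) \to \ca(t)$ is injective on the unit fibre whenever $t_{e_P}$ is injective. I would invoke the description of $[\cdot]_{e_P}$ from \cite[Subsection 2.4]{KP25} together with Fock covariance, or the gauge-invariant uniqueness theorem for $\T_\la(X)$ if it is available earlier in the paper. Once the unit fibre embeds, amenability of $G$ (via the faithful conditional expectation on the full $=$ reduced bundle algebra) makes $\T_\la(X) \to \ca(t)$ a $*$-isomorphism. This isomorphism is equivariant, since it intertwines the generators $\la_p(\xi_p) \mapsto t_p(\xi_p)$. I expect this injectivity step to be the main obstacle. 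It can instead be bypassed as follows: one only needs that $\bo{K}_{\Bx,t_\ast}$ is the image of $\bo{K}_{\Bx,\la_\ast}$ under a map that is faithful. That faithfulness comes from injectivity of $t$ on $A$, because the cores are built from Fock covariant data and $t$ restricted to $[\ca(t)]_{e_P}$ is faithful on each finite sum.

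With $\T_\la(X) \cong \ca(t)$ equivariantly, the cores correspond: $\bo{K}_{\Bx,t_\ast} \cong \bo{K}_{\Bx,\la_\ast}$ for all $\Bx \in \J$. The equivalences then follow.

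For (i)$\Leftrightarrow$(ii), transport exactness through the isomorphism. Since $\la$ itself is an injective Fock covariant representation, this also shows that "for an" and "for every" agree.

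For (ii)$\Leftrightarrow$(iii), apply Theorem~\ref{T:Kcores} to the equivariant representation $t$, which gives exactness of $\ca(t)$ if and only if $\bo{K}_{\Bx,t_\ast}$ is exact for every $\Bx \in \J$.

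If injectivity of $\T_\la(X) \to \ca(t)$ proves delicate, there is a fallback that uses only (ii)$\Leftrightarrow$(iii) via the theorem together with the identification of the cores. That identification reduces to faithfulness on the fixed point algebra, which is exactly what is needed.
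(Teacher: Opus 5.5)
There is a genuine gap. The step you single out as key is false, and so is the proposed bypass. Injectivity of $t$ on $A$ together with Fock covariance does \emph{not} make $\T_\la(X)\to\ca(t)$ injective on the fixed point algebra, nor on a finite sum of cores.

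Take $P=\bZ_+\subseteq\bZ$ and $X_n=\bC$, so that $\T_\la(X)$ is the Toeplitz algebra generated by the unilateral shift $S=\la_1(1)$. Every representation here is Fock covariant, since $\la_\ast$ is faithful on $\T(X)$ by Coburn's Theorem. The representation $t_n(1):=z^n$ on $L^2(\mathbb{T})$ is injective, Fock covariant and even equivariant, yet $\ca(t)=C(\mathbb{T})\not\simeq\T_\la(X)$. Concretely, the nonzero projection $I-SS^*\in\bo{K}_{\bZ_+,\la_\ast}+\bo{K}_{1+\bZ_+,\la_\ast}$ is sent to $1-z\bar z=0$.

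What is actually true is Proposition~\ref{P:Kcoresiso}: the canonical map is isometric on each core \emph{separately}. Cancellations between different cores are exactly what Cuntz--Pimsner type boundary quotients record. Your opening claim that injective Fock covariant representations are equivariant also fails. In the same example, $t_n(1):=1$ on $\bC$ is injective and Fock covariant, but $\ca(t)=\bC$ admits no coaction sending $t_n(1)\mapsto t_n(1)\otimes u_n$ for all $n$. So you cannot apply Theorem~\ref{T:Kcores} through Corollary~\ref{C:Excores-1} to $t$ itself to get (ii)$\Leftrightarrow$(iii). Nor is there an isomorphism through which to transport exactness for (i)$\Leftrightarrow$(ii).

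The missing idea is that no isomorphism is needed; all the gluing can be done inside $\T_\la(X)$.
\begin{itemize}
\item[(i)$\Rightarrow$(ii).] Since $G$ is amenable, $\T_\la(X)\simeq\T_{\cov}^{\fock}(X)$ is universal for Fock covariant representations. Hence every such $\ca(t)$ is a quotient of $\T_\la(X)$, and exactness passes to quotients.
\item[(ii)$\Rightarrow$(iii).] Exactness passes to C*-subalgebras.
\item[(iii)$\Rightarrow$(i).] Proposition~\ref{P:Kcoresiso} gives $\bo{K}_{\Bx,t_\ast}\simeq\bo{K}_{\Bx,\la_\ast}$ for each $\Bx\in\J$ individually. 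Then apply Corollary~\ref{C:Excores-1} (or Corollary~\ref{C:Excores-3}) to the Fock representation $\la$, which is equivariant with a normal coaction, to get exactness of $\T_\la(X)$.
\end{itemize}
In the last step, the passage from cores to $\bo{B}$-cores happens via Theorem~\ref{T:Exsum} inside $[\T_\la(X)]_{e_P}$, where faithfulness on sums never arises. This is the paper's argument for Corollary~\ref{C:Excores-2}. It also shows that ``for an'' can be upgraded to ``for every'' in (ii) and (iii).
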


We separately consider the case where $G$ is exact.
We show that $\T_\la(X)$ is exact if and only if $\ca(t)$ is exact for a quotient of $\T_\la(X)$ that is injective on $A$, if and only if $\ca(t)$ is exact for an injective equivariant Fock covariant representation $t$ of $X$ with a normal coaction, see Corollary~\ref{C:Excores-3}.
The injectivity and Fock covariance hypotheses allow us to identify the $\bo{K}$-cores.
Thus we recover that, if $G$ is exact, then every quotient of the reduced C*-algebra of a semigroup is exact, without appealing to any dynamical system construction (see Corollary~\ref{C:exsem}).

We perform a similar study for nuclearity.
This in turn recovers that, if $G$ is amenable, then every quotient of the reduced C*-algebra of the semigroup (which in this case coincides with the Toeplitz C*-algebra of the semigroup in the sense of Laca--Sehnem \cite{LS22}) is nuclear \cite[Corollary 4.10]{LS22}.
This is done in Corollary~\ref{C:nucsem}, without appealing to any dynamical system construction.

\begin{thmintro}[Theorem~\ref{T:Nuccores}]
Let $P$ be a unital subsemigroup of an amenable discrete group $G$ and let $X$ be a product system over $P$.
Let $t$ be an equivariant representation of $X$.
Then $\ca(t)$ is nuclear if and only if $\bo{K}_{\Bx,t_\ast}$ is nuclearly embedded in $[\ca(t)]_{e_P}$ for every $\Bx \in \J$.
\end{thmintro}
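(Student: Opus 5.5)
Since $G$ is amenable and $t$ is equivariant, $\ca(t)$ carries a coaction of $G$ with conditional expectation $E \colon \ca(t) \to [\ca(t)]_{e_P}$, and the coaction is automatically normal. A standard result for Fell bundles over amenable groups gives that $\ca(t)$ is nuclear if and only if the unit fibre $[\ca(t)]_{e_P}$ is nuclear; equivalently, $\ca(t)$ is Morita equivalent to the crossed product by the dual coaction, which is nuclear exactly when the fixed point algebra is. So the plan is to reduce everything to the statement that $[\ca(t)]_{e_P}$ is nuclear if and only if every $\bo{K}_{\Bx,t_\ast}$ is nuclearly embedded in $[\ca(t)]_{e_P}$.

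The forward direction is immediate. If $[\ca(t)]_{e_P}$ is nuclear, then its identity map is nuclear, so restricting a net of completely positive contractive factorisations through matrix algebras to $\bo{K}_{\Bx,t_\ast}$ shows that the inclusion $\bo{K}_{\Bx,t_\ast}\hookrightarrow[\ca(t)]_{e_P}$ is nuclear.

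For the converse I use the inductive limit description recalled from \cite[Subsection 2.4]{KP25}. The fixed point algebra is the closure of the increasing union of the C*-algebras $B_\F := \sum_{\Bx\in\F}\bo{K}_{\Bx,t_\ast}$, with $\F$ ranging over finite $\cap$-closed subsets of $\J$. Moreover $\bo{K}_{\Bx}\bo{K}_{\By}\subseteq\bo{K}_{\Bx\cap\By}$, so each $\bo{K}_{\Bx}$ for $\Bx$ minimal in $\F$ is an ideal in $B_\F$. Nuclearity of a C*-algebra is equivalent to the identity being nuclear, and a point-norm limit of nuclear maps is nuclear. It therefore suffices to show that each inclusion $\iota_\F \colon B_\F\hookrightarrow[\ca(t)]_{e_P}$ is nuclear. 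I will do this by induction on $|\F|$.

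The induction uses a "two-out-of-three" lemma for nuclear embeddings of extensions. Suppose $0\to I\to B\to B/I\to0$ is an extension inside a C*-algebra $D$, the inclusion $I\hookrightarrow D$ is nuclear, and a suitable map out of $B/I$ is nuclear. Then $B\hookrightarrow D$ is nuclear. To arrange this, choose a minimal $\Bx\in\F$ and set $I=\bo{K}_{\Bx}$ and $\F'=\F\setminus\{\Bx\}$, which is still $\cap$-closed. Then $B_\F/I$ is a quotient of $B_{\F'}$, to which the induction hypothesis applies. To prove the lemma I will follow Katsura's linking/approximate-unit argument \cite{Kat04} and its extension in \cite{Kak20}. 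Take a quasicentral approximate unit $(u_\lambda)$ of $I$ inside $B$ and split $b = u_\lambda^{1/2}bu_\lambda^{1/2}+(1-u_\lambda)^{1/2}b(1-u_\lambda)^{1/2}$ up to a small error. Approximate the first term using nuclearity of $I\hookrightarrow D$, and the second using nuclearity of $B_{\F'}\hookrightarrow D$ composed with a completely positive lift. Choi--Effros lifting is not available without nuclearity, so instead I lift through $B_{\F'}$ directly, using the cp map $b\mapsto(1-u_\lambda)^{1/2}b(1-u_\lambda)^{1/2}$ to cut away $I$.

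The main obstacle is exactly this extension step. Nuclear embeddability is not inherited by quotients in general, so the quotient part has to be handled via the concrete compression by $(1-u_\lambda)^{1/2}$ rather than abstractly. The compression involves $u_\lambda$, which lies in $I$ but not in $B_{\F'}$. I will handle this by noting that the compressed elements stay inside $B_\F$, and that $B_\F = B_{\F'}+I$, so the remainder can be absorbed into the $I$-part with controlled errors.
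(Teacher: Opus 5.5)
Your reduction to the fixed point algebra, the forward direction, and the passage to the inclusions $\bo{B}_{\F,t_\ast}\hookrightarrow[\ca(t)]_{e_P}$ all agree with the paper. The inductive step, however, has two genuine gaps.

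First, removing a \emph{minimal} $\Bx$ does not work. In a finite $\cap$-closed $\F$ the minimal element is the intersection of all members, so $\F'=\F\setminus\{\Bx\}$ is typically \emph{not} $\cap$-closed. For instance, take $P=\bZ_+^2$ and $\F=\{P,(1,0)+P,(0,1)+P,(1,1)+P\}$. You would remove $(1,1)+P=((1,0)+P)\cap((0,1)+P)$. In $\ca_\la(\bZ_+^2)$ the remaining sum of cores is $\spn\{1,V_{(1,0)}V_{(1,0)}^*,V_{(0,1)}V_{(0,1)}^*\}$, which is not closed under multiplication, since the product of the two range projections is $V_{(1,1)}V_{(1,1)}^*$. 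So your induction hypothesis does not apply to $B_{\F'}$, and ``$B_\F/I$ is a quotient of $B_{\F'}$'' gives you no usable C*-algebra.

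The paper removes a \emph{maximal} $\Bx$ instead (Remark~\ref{R:ideal}). Then $\F'$ is $\cap$-closed and $\Bx\cap\By\in\F'$ for all $\By\in\F'$, so $\bo{B}_{\F',t_\ast}$ is an \emph{ideal} of $\bo{B}_{\F,t_\ast}=\bo{B}_{\F',t_\ast}+\bo{K}_{\Bx,t_\ast}$. The roles are thus reversed relative to yours: the inductively controlled piece is the ideal, and the single core is the complementary C*-subalgebra.

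Second, and more seriously, your gluing lemma lacks the mechanism that makes it work. Use the notation of Theorem~\ref{T:Nucsum}. For the part of $x\in\J+\A$ away from $\J$ you need a c.c.p.\ map on all of $\J+\A$ that vanishes on $\J$ and that, on the given finite set, equals $\al_\A$ applied to lifts of $x$ in $\A$ modulo $\J$. Compressing by $(1-u_\la)^{1/2}$ does not supply this, for three reasons:
\begin{enumerate}
\item The compressed elements lie in $\J+\A$, not in $\A$, so the factorisation maps $\al_\A$ of $\A\hookrightarrow\C$ cannot be applied to them.
\item If you extend $\al_\A$ to $\J+\A$ by Arveson's theorem, its values on the $\J$-components are uncontrolled. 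Their images under $\be_\A$ need not lie in $\J$, so no compression removes them.
\item The decomposition $\J+\A=\A+\J$ is not implemented by any c.p.\ map, so the errors cannot simply be ``absorbed into the $I$-part''.
\end{enumerate}

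The missing idea is that nuclear embeddability of $\A$ makes $\A$ exact. Hence $0\to\J\cap\A\to\A\to\A/(\J\cap\A)\to0$ is locally c.p.\ liftable by Theorem~\ref{T:Exlocgen}, and this replaces the Choi--Effros lifting you correctly note is unavailable. The paper proceeds as follows:
\begin{enumerate}
\item Take a c.c.p.\ lift $\theta$ on the finite-dimensional selfadjoint space $E$ spanned by the images of the finite set.
\item Extend $\al_\A\circ\theta$ to the whole quotient by Arveson's theorem via the unitisation (Remark~\ref{R:Ex-ext}).
\item Precompose with $\pi\circ q$. This gives $\wh{\al}_\A$ with $\be_\A\circ\wh{\al}_\A(x)\approx x+b_x$ and $b_x\in\J$.
\end{enumerate}
Only then is the quasicentral approximate unit used, and it is used on the output side: $\Psi=U\be_\J(\cdot)U+K\be_\A(\cdot)K$ with $U^2+K^2=1$, with $VxV$ (where $UV=U$) fed into $\al_\J$, and with $K$ killing $b_x$. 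This arrangement is also what keeps $\Psi$ contractive, an issue your input-side splitting leaves open.
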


These results settle item (i) of Question~\ref{Qn:main2} for any product system, and we next pass to the study of item (ii).
Unlike \cite{KLS25}, we seek conditions on the semigroup $P$ rather than further hypotheses on $X$, apart from compact alignment.
A natural candidate beyond right LCM semigroups is the class of finitely aligned semigroups, which has been under considerable study in the past decade originating from the work of Gould \cite{Gou92} and Spielberg \cite{Spi12, Spi14}, see for example \cite{BKQS18, CG21, DM14, ES18, LSV24, LV20, LV24, Li23, MS25}.
Such semigroups appear naturally even at the level of numerical subsemigroups of $\bZ_+$, see Example~\ref{Ex:fa}.

A semigroup $P$ is called \emph{finitely aligned} if for any $p,q \in P$ the intersection $pP \cap qP$ is either empty or it can be written as $\bigcup_{i=1}^n s_i P$ for some $n \in \bN$ and $s_i \in P$.
Hence right LCM semigroups are those that are at most \emph{singly aligned}.
A product system $X$ over such a semigroup is called \emph{compactly aligned} if it satisfies an analogous property to that of Fowler \cite{Fow02}, see Definition~\ref{D:famin}.
This is trivially satisfied for example when all left actions are by compacts; hence this class covers the semigroup algebras.
In terms of exactness we prove the following theorem.

\begin{thmintro}[Theorem~\ref{T:faexact}]
Let $P$ be a unital finitely aligned subsemigroup of an exact discrete group $G$ and let $X$ be a compactly aligned saturated product system over $P$ with coefficient C*-algebra $A$.
Then $\T_\la(X)$ is exact if and only if $A$ is exact.

Suppose in addition that $G$ is amenable, and $t$ is an injective Fock covariant representation of $X$.
Then $\ca(t)$ is exact if and only if $A$ is exact.
\end{thmintro}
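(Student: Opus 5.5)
The plan is to split the statement into its two parts and reduce both to exactness of the $\bo{K}$-cores, using the first-part results (Theorem~\ref{T:Kcores}, Corollary~\ref{C:Excores-1} and Corollary~\ref{C:Excores-3}). Necessity in both parts is routine. $A$ sits as a C*-subalgebra of $\T_\la(X)$, and via an injective $t$ also of $\ca(t)$. Exactness passes to C*-subalgebras, so exactness of $\T_\la(X)$ or of $\ca(t)$ forces exactness of $A$. The work is therefore in the converse: assuming $A$ exact, show that every core $\bo{K}_{\Bx,\la_\ast}$ (respectively $\bo{K}_{\Bx,t_\ast}$) is exact.

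For the first part, with $G$ only exact, I cannot use the amenable two-out-of-three results directly. I would instead use that $\T_\la(X)$ is the reduced C*-algebra of the Fell bundle $\F\C X$ over $G$. For an exact group, the reduced cross-sectional algebra of a Fell bundle is exact as soon as the unit fibre is exact; this follows from Exel--Ng-type results on exactness of reduced Fell bundle algebras over exact groups. The unit fibre is $[\T_\la(X)]_{e_P}$, which is an inductive limit of finite sums of the cores $\bo{K}_{\Bx,\la_\ast}$ over finite $\cap$-closed subsets of $\J$. Inductive limits of exact C*-algebras are exact. Finite sums $\sum_{\Bx\in F}\bo{K}_{\Bx}$ are built by iterated extensions, since the cores multiply into cores indexed by intersections. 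Exactness is preserved under extensions when the extension is locally split, and the analysis in the proof of Theorem~\ref{T:Kcores} supplies exactly this. So it suffices to prove the key lemma: if $A$ is exact, then each $\bo{K}_{\Bx,\la_\ast}$ is exact.

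For the key lemma I would use finite alignment and compact alignment (Definition~\ref{D:famin}) to identify the cores. In the finitely aligned setting, each $\Bx\in\J$ is a finite union $\bigcup_{i=1}^n s_iP$. I expect compact alignment, together with saturation, to give that $\bo{K}_{\Bx,t_\ast}$ is the closed linear span of $\{t_{s_i}(X_{s_i})\, \bo{K}\, t_{s_j}(X_{s_j})^*\}_{i,j}$, with a middle term coming from a finite compact-alignment computation. The model case is the right LCM case, where $\bo{K}_{pP,t_\ast}=[t_p(X_p)t_p(X_p)^*]$. Granting this, $\bo{K}_{\Bx,t_\ast}$ is a corner-type, or more precisely an $n\times n$ matrix-type, subquotient of $M_n(\K(\oplus_i X_{s_i}))$. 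I would run Katsura's linking-algebra argument: the linking algebra of the right $A$-module $Y=\oplus_{i=1}^n X_{s_i}$ contains $A$ as a full corner after cutting to the ideal generated by inner products. $\K(Y)$ is then Morita equivalent to an ideal of $A$, hence exact, since exactness is preserved by ideals and by Morita equivalence in the separable case; in general one argues via the linking algebra and hereditary subalgebras. Finally, $\bo{K}_{\Bx,t_\ast}$ is the image of a $*$-homomorphism from a C*-subalgebra of $\K(Y)$, and for injective Fock covariant $t$ this map is injective on the relevant algebra. So the core is a C*-subalgebra of an exact algebra, hence exact. The main obstacle is this identification of the cores for finitely aligned $P$. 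The difficulty is that the word $t_{p_1}(X_{p_1})^*t_{q_1}(X_{q_1})\cdots$ must collapse to the span of $t_{s_i}(\cdot)\,t_{s_j}(\cdot)^*$ terms with $\Bx=\bigcup s_iP$. I would prove this by induction on the length $n$ of the word, using compact alignment at each step to rewrite $t_p(X_p)^*t_q(X_q)$ as a sum over the finitely many $s$ with $pP\cap qP=\bigcup sP$.

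For the second part, with $G$ amenable and $t$ injective Fock covariant, the same core identification applies to $t$, so every $\bo{K}_{\Bx,t_\ast}$ is exact when $A$ is. Corollary~\ref{C:Excores-2} then gives exactness of $\ca(t)$. That corollary needs equivariance, which Fock covariance provides through $\T_{\cov}^{\fock}(X)\cong\T_\la(X)$ under amenability. Alternatively, one can note that $\ca(t)$ is a quotient of $\T_\la(X)$ that is injective on $A$ and apply Corollary~\ref{C:Excores-3} together with the first part.
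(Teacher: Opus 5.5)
Your overall architecture matches the paper's proof of Theorem~\ref{T:faexact}. Necessity comes from injectivity and the fact that exactness passes to C*-subalgebras. Sufficiency is reduced to exactness of each $\bo{K}_{\Bx,\la_\ast}$ via Theorem~\ref{T:Kcores} and Exel's result for exact $G$, as in Corollary~\ref{C:Excores-3}. The second part follows by viewing $\ca(t)$ as a quotient of $\T_\la(X)\simeq\T_{\cov}^{\fock}(X)$. One claim should go: Fock covariance does \emph{not} give equivariance of $t$. Your quotient route does not need it, and neither does Corollary~\ref{C:Excores-2}.

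\textbf{The gap is in the key lemma.} You propose to rewrite $t_p(X_p)^*t_q(X_q)$ as a sum over the generators $s$ of $pP\cap qP$, and then to realise $\bo{K}_{\Bx,t_\ast}$ as a $*$-homomorphic image of a subalgebra of $\K(\bigoplus_i X_{s_i})$. Both steps require the generating principal ideals to be pairwise disjoint, i.e.\ strong finite alignment, which the theorem does not assume. Here is a concrete failure. Take $P=\{0,2,3,\dots\}$ from Example~\ref{Ex:fa} and $X_p=\bC$, so that $\T_\la(X)=\ca_\la(P)$ is generated by the shifts $V_p$ on $\ell^2(P)$.
\begin{itemize}
\item We have $(2+P)\cap(3+P)=(5+P)\cup(6+P)$. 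The Nica-type rewriting $V_2^*V_3=V_3V_2^*+V_4V_3^*$ fails at $\delta_5$: the left side gives $\delta_6$ and the right side gives $2\delta_6$.
\item Since $(5+P)\cap(6+P)=(8+P)\cup(9+P)$ is again a non-disjoint union, inclusion--exclusion corrections never terminate.
\item The projection $V_2V_2^*V_3V_3^*$ onto $[5,\infty)$ lies in $\bo{K}_{[5,\infty),\la_\ast}$. It is not in $\bC V_5V_5^*+\bC V_6V_6^*$: compare the values at $\delta_5,\delta_6,\delta_8$, which would be $a,b,a+b$ against $1,1,1$.
\item Moreover $V_5V_5^*V_6V_6^*\neq 0$. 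So $(k_5,k_6)\mapsto\la^{(5)}(k_5)+\la^{(6)}(k_6)$ is not even a $*$-homomorphism on $\K(X_5)\oplus\K(X_6)$, let alone on $\K(X_5\oplus X_6)$.
\end{itemize}

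\textbf{The missing idea} is to stop decomposing core elements inside $\T_\la(X)$ and instead embed the core abstractly. Fix an irredundant generating family $\{s_1,\dots,s_m\}$ of $\Bx$, which is finite by finite alignment.
\begin{itemize}
\item The map $b\mapsto(\ev_{s_i}(b))_i\in\bigoplus_i\L(X_{s_i})$ is an injective $*$-homomorphism (Proposition~\ref{P:fafibres}). Indeed, $b$ leaves fibres invariant, vanishes off $\Bx$, and satisfies $\ev_r(b)=i_{s_i}^{r}(\ev_{s_i}(b))$ for $r\in s_iP$. Overlaps of the $s_iP$ are then harmless, since the two amplifications automatically agree.
\item Compactness comes from a fibrewise induction on word length (Proposition~\ref{P:fanormal}). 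On $X_r$ with $r\in\Bx$, a neutral word equals $i^z_{s,s}(S)$ with $s\in\Bx$, $r=sz$ and $S\in\K(X_s)$. Compact alignment is used at one minimal common right multiple at a time.
\item At $r=s_i$, irredundancy forces $z\in P^*$, and amplification along units preserves compacts (Lemma~\ref{L:faunits} and Corollary~\ref{C:facompact}).
\end{itemize}
Hence $\bo{K}_{\Bx,\la_\ast}$ is isomorphic to a C*-subalgebra of $\bigoplus_i\K(X_{s_i})$, which is exact by Katsura's result. This embedding does not sit inside $\T_\la(X)$. That is harmless for exactness, which is intrinsic, but it is why your in-algebra picture only reaches the strongly finitely aligned case. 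There it is essentially Propositions~\ref{P:facore} and~\ref{P:fadirect}, which the paper uses for nuclearity.
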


Concerning nuclearity we look at \emph{strongly finitely aligned} semigroups, i.e., finitely aligned semigroups where the generation is achieved by mutually disjoint principal ideals.
There are examples that can be constructed using embeddings of the free semigroup, see Example~\ref{E:BS}.
In this setting compact alignment allows us to have an embedding of a $\bo{K}$-core inside the direct sum of images of relevant compact operators.
With that in hand we prove the following result.

\begin{thmintro}[Theorem~\ref{T:fanuc}]
Let $P$ be a unital strongly finitely aligned subsemigroup of an amenable discrete group $G$ and let $X$ be a compactly aligned saturated product system over $P$ with coefficient C*-algebra $A$.
Let $t$ be an injective equivariant Fock covariant representation of $X$.
Then $\ca(t)$ is nuclear if and only if the embedding $A \hookrightarrow [\ca(t)]_{e_P}$ is nuclear.
\end{thmintro}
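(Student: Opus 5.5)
Via Theorem~\ref{T:Nuccores}, it suffices to show that every core $\bo{K}_{\Bx,t_\ast}$, $\Bx\in\J$, is nuclearly embedded in $[\ca(t)]_{e_P}$ if and only if $A\hookrightarrow [\ca(t)]_{e_P}$ is nuclear. Since $P\in\J$ and $\bo{K}_{P,t_\ast}=t_{e_P}(A)\cong A$ by injectivity, one direction is immediate: a nuclear embedding of the cores restricts to a nuclear embedding of $A$. All the work is in the converse.

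For the converse, fix $\Bx\in\J$ with $\Bx\neq\mt$ (the empty ideal gives the zero core, or is handled trivially). First I would use strong finite alignment together with the closure properties of $\J$ to write $\Bx=\bigsqcup_{i=1}^n s_iP$ with the principal ideals $s_iP$ mutually disjoint. Then, using compact alignment of the saturated $X$ and Fock covariance of $t$, I would show that $\bo{K}_{\Bx,t_\ast}$ embeds into the direct sum $\bigoplus_{i=1}^n [t_{s_i}(X_{s_i})t_{s_i}(X_{s_i})^*]$. More precisely, the product $\bo{K}_{\Bx,t_\ast}$ decomposes as $\sum_i \psi_{s_i}(\K(X_{s_i}))$, and disjointness forces $\psi_{s_i}(\K X_{s_i})\psi_{s_j}(\K X_{s_j})=0$ for $i\neq j$. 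This is the embedding the introduction refers to, and I would prove it first on the Fock representation, where the supports $s_iP$ are orthogonal projections of $\F X$, and then transfer it to $t$ via Fock covariance and the identification of cores.

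Next, for each $i$, I would show that $\psi_{s_i}(\K X_{s_i})=[t_{s_i}(X_{s_i})t_{s_i}(X_{s_i})^*]$ is nuclearly embedded in $[\ca(t)]_{e_P}$ whenever $A$ is. For this I would use the linking algebra argument of Katsura: the hereditary/linking subalgebra generated by $t_{s_i}(X_{s_i})$ and $t(A)$ inside $\ca(t)$. Concretely, take a completely positive contractive approximate factorisation $\phi_\la\colon A\to M_{k_\la}$, $\psi_\la\colon M_{k_\la}\to[\ca(t)]_{e_P}$ of the inclusion, and for finitely many $\xi_j,\eta_j\in X_{s_i}$ define the maps
\[
t_{s_i}(\xi)t_{s_i}(\eta)^*\mapsto \sum \text{matrix compressions } t_{s_i}(\xi_j)\,\psi_\la(\cdots)\,t_{s_i}(\eta_j)^*.
\]
Equivalently, work in $M_2$ of the relevant corner, using that nuclear embeddability passes to corners $pBp$ of $M_2(B)$ when the ambient map is nuclear and the relevant elements factor through $A$ via $\xi^*(\cdot)\eta\in A$. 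Finally, a finite orthogonal direct sum of nuclearly embedded subalgebras is nuclearly embedded, because the approximations can be assembled via approximate units of each summand and the resulting maps summed. Cores of the form $\bo{K}_{\Bx,t_\ast}$ are thus nuclearly embedded, and Theorem~\ref{T:Nuccores} concludes the proof.

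I expect the main obstacle to be the embedding step: identifying $\bo{K}_{\Bx,t_\ast}$ with, or embedding it into, the orthogonal sum of compacts using only compact alignment in the finitely aligned setting. This requires showing that products $t_p(X_p)^*t_q(X_q)$ with $pP\cap qP=\bigsqcup s_iP$ land in $\sum_i\psi_{s_i}(\K X_{s_i})$, and that these summands are orthogonal under $t$ and not merely under $\la$. I would first attempt this via Fock covariance, reducing to the Fock representation, where orthogonality of the summands is visible from the disjointness of the $s_iP$ on $\F X$.
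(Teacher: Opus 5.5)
Your architecture matches the paper's proof. You reduce via Theorem~\ref{T:Nuccores} and decompose $\Bx=\bigsqcup_i s_iP$ as in Proposition~\ref{P:sfa}. You then place $\bo{K}_{\Bx,t_\ast}$ inside the orthogonal sum $\sum_i t^{(s_i)}(\K(X_{s_i}))$ by working with $\la$ and pushing through Fock covariance, as in Propositions~\ref{P:facore} and~\ref{P:fadirect}. Finally you use Katsura's argument for each $t^{(s_i)}(\K(X_{s_i}))$ and glue orthogonal summands with approximate units; your sketches of these last ingredients are sound.

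The gap is the step you yourself flag as the main obstacle: you give no argument for $\bo{K}_{\Bx,\la_\ast}\subseteq\sum_i\la^{(s_i)}(\K(X_{s_i}))$. Moreover, your reformulation of it targets the wrong objects. A product $t_p(X_p)^*t_q(X_q)$ lies in the $p^{-1}q$-fibre, so for $p\neq q$ it is not in $[\ca(t)]_{e_P}$ (unless it is zero) and cannot land in a sum of compacts. The core is spanned by neutral words $\la_{p_1}(\xi_{p_1})^*\la_{q_1}(\xi_{q_1})\cdots\la_{p_n}(\xi_{p_n})^*\la_{q_n}(\xi_{q_n})$ of arbitrary length with attached ideal $\Bx$. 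Compact alignment, by contrast, only constrains pairs of compacts at minimal common right multiples.

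Also, ``decomposes as'' must be read as containment. For $X_p=\bC$ over the semigroup of Example~\ref{E:BS} and $\Bx=xyxP\sqcup xyyxP$, the core is one-dimensional by (T3) while the sum is two-dimensional. Similarly, for the easy direction you only need $t_{e_P}(A)\subseteq\bo{K}_{P,t_\ast}$. The equality you state does hold here, but only as a consequence of the unproved containment at $\Bx=P$.

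The missing idea is the paper's Proposition~\ref{P:fanormal}. By induction on $n$, for every $r\in\Bx$ one has $b|_{X_r}=i^z_{gs,s}(S)$ with $S\in\K(X_s,X_{gs})$, $r=sz$ and, crucially, $s\in\Bx$. At each step, finite alignment provides a minimal common right multiple $w$ of the two current indices with $hr\in wP$. Lemma~\ref{L:facomp} (compact alignment at $w$, combined with approximate units) then keeps the composite compact.

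Insisting on $s\in\Bx$ is what makes this usable, because amplification $k\mapsto i_s^{sz}(k)$ does not preserve compactness in general (Remark~\ref{R:faneed}). At an irredundant generator $r=s_i$, the conditions $s\in\Bx$ and $s_i\in sP$ force $z\in P^*$ (Lemma~\ref{L:famax}). Amplification along units does preserve compacts (Lemma~\ref{L:faunits}), which gives $\ev_{s_i}(\bo{K}_{\Bx,\la_\ast})\subseteq\K(X_{s_i})$ (Corollary~\ref{C:facompact}).

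Only then does $b=\sum_i\la^{(s_i)}(\ev_{s_i}(b))$ follow. Both sides are diagonal, vanish off $\Bx$, and agree on each $s_iP$ by Lemma~\ref{L:fadiag}, Lemma~\ref{L:facptfibre} and disjointness. Applying the $*$-homomorphism $\dot{t}$ induced by Fock covariance then transfers containment and orthogonality to $t$. Without this inductive control of long words, your proposal proves the theorem only modulo its central step.
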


These results apply in particular to the strong covariant representations of a product system.
Strong covariance was identified by Sehnem \cite{Seh19} as the required structure to deduce injectivity of representations from the coefficient algebra to the fixed point algebra.
The (reduced) strong covariant C*-algebra contains all previously known Cuntz-type C*-algebras that play the role of a boundary quotient.
This reduced model was further used in \cite{DKKLL22, KKLL23} to address the coaction C*-envelope problem, and by Sehnem \cite{Seh22} to resolve the C*-envelope problem in full generality.

\subsection{The compact alignment assumption}

One of the main goals in this endeavour has been to deduce exactness and nuclearity of $\ca(t)$ from the coefficient algebra $A$, at least when $t$ is Fock covariant \cite{Kak20, KKLL23, KLS25, Kat04}.
Compact alignment may look like a forced fix to obtain a positive answer to item (ii) of Question~\ref{Qn:main2}, but this is not simply out of convenience.
In Example~\ref{E:nonexact} we present a product system over $P = \bZ_+^2$ and $A = \bC$ that is not compactly aligned, \emph{and} its Fock C*-algebra fails to be exact (let alone be nuclear) as the core on $(1,1) + \bZ_+^2$ contains a copy of the non-exact free group C*-algebra $\ca(\bF_2)$.
Therefore we cannot hope for a characterisation in the spirit of Theorem D and Theorem E in the absence of compact alignment.

\subsection{Contents}

In Section~\ref{S:nuc} we give the background on nuclear maps that we require.
Here we show a gluing theorem for exactness that is iteratively used for summing up exact $\bo{K}$-cores.
This follows from the connection of exactness with locally split extensions.
We also present an analogous gluing theorem for nuclear embeddings, which is used in a similar manner.
See the AI Statement (Subsection~\ref{Ss:aist}) below for this result.

In Section~\ref{S:ps} we pass to the product systems results with a focus on semigroups that are embeddable in an amenable group.
We give the preliminaries, and we show that exactness (resp.\ nuclearity) of the fixed point algebra is equivalent to exactness (resp.\ nuclear embedding) of the $\bo{K}$-cores.
This resolves item (i) of Question~\ref{Qn:main2}.

In Section~\ref{S:capsfa} we study item (ii) of Question~\ref{Qn:main2}. 
In particular, we show that exactness (resp.\ nuclear embeddability in the fixed point algebra) of the $\bo{K}$-cores in the setting of compactly aligned product systems over finitely aligned (resp.\ strongly finitely aligned) semigroups is equivalent to exactness of $A$ (resp.\ nuclear embeddability of $A$ in the fixed point algebra).

In Section~\ref{S:beyond} we present an example where exactness of $\T_\la(X)$ fails to be inherited from $A$ even for $\bZ_+^2$ if the compact alignment assumption is dropped.
See the AI Statement (Subsection~\ref{Ss:aist}) below for this example.

\subsection{AI Statement}\label{Ss:aist}

The initial aim of the project has been to resolve in full generality the connection of exactness and nuclearity of the generated C*-algebras with the cores, and then with the coefficient C*-algebra.
The strategy was laid out in May 2023, in the second author's funding application to the Hellenic Foundation for Research and Innovation (HFRI) under the 5th Call for HFRI PhD Fellowships.
The exactness results of Subsection~\ref{Ss:exkcores} were achieved in December 2025.
The strategy to achieve the exactness (resp.\ nuclearity) results for compactly aligned product systems over finitely aligned semigroups (resp.\ over strongly finitely aligned semigroups) was set up at this point.

In August 2026, upon revisiting the nuclearity part of the problem, we prompted Large Language Models (LLMs) concerning a two-out-of-three nuclear-embedding lemma analogous to \cite[Proposition A.6]{Kat04} and \cite[Proposition 3.3]{Kak20} that have been used for this type of results in \cite{Kak20, KKLL23, Kat04}. 
After several unsuccessful attempts we prompted LLMs towards gluing nuclear embeddings, and after several iterations, we arrived at Theorem~\ref{T:Nucsum}.
Although Theorem~\ref{T:Nucsum} supersedes Theorem~\ref{T:Exsum}, we decided to leave the exposition as is.
Moreover, upon completing the project we prompted LLMs to provide examples of strongly finitely aligned semigroups that are not right LCM semigroups, and to test the exactness and nuclearity questions beyond compact alignment.
This returned Example~\ref{E:BS} and Example~\ref{E:nonexact}, respectively.

For the aforementioned results (Theorem~\ref{T:Nucsum}, Example~\ref{E:BS} and Example~\ref{E:nonexact}) and for editing and proofreading purposes, we used OpenAI's GPT-5.6 Sol and Anthropic's Claude Opus 5.5.
We reviewed the arguments as needed and rewrote the exposition to fit the context of the manuscript. 
We take full responsibility for the contents of the paper.

\section{Exactness and nuclear embeddings} \label{S:nuc}

\subsection{Notation}

Throughout, all C*-algebras are complex, ideals are closed and two-sided.
For two C*-algebras $\A, \B$ we denote by $\A\odot \B$ their algebraic tensor product, by $\A \otimes \B$ their minimal (spatial) tensor product, and by $\A\otimes_{\max} \B$ their maximal tensor product.
Recall that every C*-norm on $\A \odot \B$ is dominated by $\| \cdot \|_{\max}$ and dominates $\| \cdot \|_{\min}$, and that the identity map of $\A \odot \B$ therefore induces a $*$-epimorphism $\A \otimes_{\max} \B \to \A \otimes \B$. 
See \cite{BO08} for more details.

We will use the notion of the \emph{unconditional unitisation} $\A^\#$ of a C*-algebra $\A$.
For an embedding $\A \subseteq \B(H)$ we write $\A^\# = \A + \bC I_{H \oplus \bC} \subseteq \B(H \oplus \bC)$ with the understanding that we use the non-unital embedding of $\B(H)$ as a corner of $\B(H \oplus \bC)$.
If $\A$ is non-unital, then $\A^\#$ is also $*$-isomorphic to $\A + \bC I_{H}$.
If $\A$ is unital, then $\A^\#$ is $*$-isomorphic to the direct sum $\A \oplus \bC \subseteq \B(H \oplus \bC)$.

\subsection{Exactness} \label{Ss:exact}

We begin this subsection with some preliminaries on exact C*-algebras.
The next definitions and results regarding exactness of C*-algebras can be found in \cite{BO08}. 

\begin{definition}
Let $\A$ be a C*-algebra. 
We say that $\A$ is \emph{exact} if for any C*-algebra $\B$ and any ideal $\I \lhd \B$ the sequence
\[
\begin{tikzcd}
0 \arrow[r] & \A \otimes \I \arrow[r] & \A \otimes \B \arrow[r] & \A \otimes (\B/\I) \arrow[r] & 0
\end{tikzcd}
\]
is short exact.
\end{definition}

A deep result of Kirchberg \cite{Kir91, Kir94, Kir95} characterises exactness in terms of finite-dimensional approximations. 
For a detailed proof see \cite[Section 3.9]{BO08}.

\begin{theorem}\cite{Kir91, Kir94, Kir95}
Let $\A$ be a C*-algebra. 
Then $\A$ is exact if and only if there exist a faithful $*$-representation $\pi \colon \A \to \B(H)$ and nets of completely contractive completely positive maps $\phi_\la \colon \A \to M_{k_\la}(\bC)$ and $\psi_\la \colon M_{k_\la}(\bC) \to \B(H)$ such that 
\[
\lim_\la \| \psi_\la \circ \phi_\la (a) - \pi(a) \| = 0 \foral a\in \A.
\]
\end{theorem}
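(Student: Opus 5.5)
This is Kirchberg's characterisation, and we only use it as a black box through \cite[Section 3.9]{BO08}. The proof we would follow splits into the two implications, of very different difficulty.

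\emph{Sufficiency.} Suppose $\pi$, $\phi_\la$ and $\psi_\la$ are as in the statement. Fix a C*-algebra $\B$ with an ideal $\I$.
\begin{enumerate}
\item Injectivity of $\A\otimes\I\to\A\otimes\B$ and surjectivity of $\A\otimes\B \to \A\otimes(\B/\I)$ hold for every $\A$. So only exactness in the middle needs proof.
\item Take $x$ in the kernel of $\A\otimes\B\to\A\otimes(\B/\I)$. Apply $(\psi_\la\circ\phi_\la)\otimes\id$; ccp maps tensor to ccp maps on minimal tensor products.
\item The map $\phi_\la\otimes\id$ carries $x$ into $M_{k_\la}(\B)$. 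Since $M_{k_\la}$ is finite-dimensional, its image lies in $\ker(M_{k_\la}(\B)\to M_{k_\la}(\B/\I)) = M_{k_\la}(\I)$.
\item Applying $\psi_\la\otimes\id$ then lands in $\B(H)\otimes\I$.
\item The elements $(\psi_\la\phi_\la\otimes\id)(x)$ converge in norm to $(\pi\otimes\id)(x)$. This is checked on elementary tensors and extended using the uniform bound $\|\psi_\la\phi_\la\|\le 1$.
\item Hence $(\pi\otimes\id)(x)\in (\B(H)\otimes\I)\cap(\pi(\A)\otimes\B)$.
\item A slice-map argument identifies this intersection with $\pi(\A)\otimes\I$. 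Concretely, slicing by functionals $\om\in\B(H)_*$ sends $(\pi\otimes\id)(x)$ into $\I$, and the approximation in (v) lets us approximate $x$ itself by elements of $\A\odot\I$.
\end{enumerate}

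\emph{Necessity.} First we reduce to $\A$ separable. Both exactness and the approximation property pass to and are determined by separable subalgebras, and nets indexed by finite subsets and tolerances can be assembled from the separable case. We also pass to the unitisation, since exactness of $\A$ and $\A^\#$ are equivalent.
\begin{enumerate}
\item Represent $\A\subseteq\B(H)$ with $H$ separable, and choose finite-rank projections $P_n\nearrow I$ of ranks $k_n$.
\item Compressions $\phi_n(a)=P_naP_n\in M_{k_n}$ give a ucp map into $\prod_n M_{k_n}$. Modulo $\bigoplus_n M_{k_n}$ it becomes an injective $*$-homomorphism $\rho\colon\A\to\prod M_{k_n}/\bigoplus M_{k_n}$, by a Voiculescu-type argument with a quasicentral choice of the $P_n$.
\item Arveson's extension theorem extends $\rho^{-1}\colon\rho(\A)\to\A\subseteq\B(H)$ to a ucp map on the quotient. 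Composing with the quotient map gives a ucp $\Phi\colon\prod M_{k_n}\to\B(H)$ with $\Phi((P_naP_n)_n)=a$.
\item What remains is to replace $\Phi$ by maps defined on single blocks $M_{k}$, or on finite products of blocks. Once that is done, the pairs $(\phi_n,\psi_n)$, with $\psi_n$ obtained from $\Phi$ by restriction, give the required net.
\end{enumerate}

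\emph{Main obstacle.} Step (iv) of necessity is the hard part, and it is exactly where exactness enters. We would first try the route in \cite{BO08}.
\begin{enumerate}
\item Exactness of $\A$ gives exactness of
\[
0\to\A\otimes\bigoplus M_{k_n}\to\A\otimes\prod M_{k_n}\to \A\otimes\Big(\prod M_{k_n}/\bigoplus M_{k_n}\Big)\to 0 .
\]
\item By the Effros--Haagerup lifting theorem, this yields local liftability. On each finite-dimensional operator system $E\subseteq\A$, the map $\rho|_E$ lifts to a ucp map $E\to\prod M_{k_n}$ up to small error.
\item Equivalently, via local reflexivity, $\rho^{-1}$ is controlled on finite-dimensional pieces. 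This allows $\Phi$ to be chosen so that its restriction to a tail of finitely many coordinate blocks already approximately reproduces $a$ on a prescribed finite set.
\end{enumerate}
The delicate point is the passage from ``approximately multiplicative modulo the ideal'' to ``norm-approximate factorisation through $M_k$''. If a direct argument stalls, the fallback is the route via $\ca(\bF_\infty)$: one shows that exactness forces the canonical map $\A\otimes_{\max}\ca(\bF_\infty)\to\A\otimes\ca(\bF_\infty)$ to behave well on finite-dimensional subspaces, and deduces nuclearity of the inclusion $\A\hookrightarrow\B(H)$ from that.
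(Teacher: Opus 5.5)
The paper gives no proof of this statement; it quotes Kirchberg and defers to Brown--Ozawa. So I measure your plan against the standard argument.

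\textbf{Sufficiency.} Your argument is the standard one up to step (vi), but the justification in step (vii) is circular. The fact that the slices of $(\pi\otimes\id)(x)$ lie in $\I$ already follows from $x\in\ker(\id_\A\otimes q)$, for the quotient $q\colon\B\to\B/\I$. It therefore cannot by itself place $x$ in $\A\otimes\I$, since that implication is precisely exactness. Moreover, the approximants from (v) lie in $\B(H)\otimes\I$, not in $\pi(\A)\otimes\I$, because the $\psi_\la$ take values in $\B(H)$.

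The identity $(\B(H)\otimes\I)\cap(\pi(\A)\otimes\B)=\pi(\A)\otimes\I$ is nevertheless true, for an elementary reason. Take an approximate unit $(e_\mu)_\mu$ of $\I$ and $y$ in the intersection. Then $y(1\otimes e_\mu)\in\pi(\A)\otimes\I$ because $y\in\pi(\A)\otimes\B$, and $y(1\otimes e_\mu)\to y$ because $y\in\B(H)\otimes\I$. With this replacement your sufficiency part is complete.

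\textbf{Necessity.} This direction has a genuine gap, and the strategy cannot work as laid out.

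Step (ii) is false in general. The map $a\mapsto(P_naP_n)_n$ is multiplicative modulo $\bigoplus_n M_{k_n}$ only if $\|[P_n,a]\|\to0$ for all $a$, i.e.\ only if the representation is quasidiagonal. Quasicentral approximate units consist of positive contractions, not projections, so they do not rescue this. In fact every C*-subalgebra of $\prod_n M_{k_n}/\bigoplus_n M_{k_n}$ is stably finite. Hence nuclear (so exact) algebras containing a proper isometry, such as the Toeplitz algebra $\ca_\la(\bZ_+)$ or $\mathcal{O}_2$, admit no injective $\rho$ at all.

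Step (iv) is ill-posed as well. Your $\Phi$ factors through the quotient, so it vanishes on $\bigoplus_n M_{k_n}$ and restricts to zero on every finite product of blocks.

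What you call the main obstacle is therefore left open. The fallback is not specific enough to assess. As phrased, it concerns the max/min comparison on $\A\odot\ca(\bF_\infty)$, which is a WEP-type property of $\A$ rather than what is needed here.

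The missing ingredient is Kirchberg's theorem $\B(H)\otimes_{\max}\ca(\bF_\infty)=\B(H)\otimes\ca(\bF_\infty)$. With $\A\subseteq\B(H)$, write a separable $\D$ as $\ca(\bF_\infty)/\J$. Exactness of $\A$ and of $\otimes_{\max}$ then give a contraction
\[
\A\otimes\D\cong\frac{\A\otimes\ca(\bF_\infty)}{\A\otimes\J}\longrightarrow\frac{\B(H)\otimes\ca(\bF_\infty)}{\B(H)\otimes\J}=\frac{\B(H)\otimes_{\max}\ca(\bF_\infty)}{\B(H)\otimes_{\max}\J}\cong\B(H)\otimes_{\max}\D
\]
extending the identity on $\A\odot\D$. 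This holds for every separable, hence every, $\D$. So the inclusion $\A\hookrightarrow\B(H)$ is nuclear by the characterisation of nuclear maps through min--max continuity. No separability reduction on $\A$, no embedding into $\prod_n M_{k_n}/\bigoplus_n M_{k_n}$, and no local inverse of $\rho$ are needed.
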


We will frequently use several properties of exactness. 
For detailed proofs see \cite{BO08}.
For example, a C*-algebra is exact if and only if its unconditional unitisation is exact. 
Furthermore, exactness passes to C*-subalgebras and quotients, and the class of exact C*-algebras is closed under taking finite direct sums, inductive limits and split extensions.

We will need a stronger result regarding extensions.
A short exact sequence 
\[
\begin{tikzcd}
0 \arrow[r] & \J \arrow[r] & \A \arrow[r, "q"] & \A/\J \arrow[r] & 0
\end{tikzcd}
\]
is called \emph{locally split} if for every finite-dimensional operator space $E \subseteq \A/\J$ there exists a completely contractive map $\theta \colon E \to \A$ such that $q\circ \theta = \id_E$. 
We will need the following non-unital version of \cite[Proposition 3.7.6]{BO08}.
The proof follows in the same way by replacing ``finite-dimensional operator systems'' by ``finite-dimensional operator spaces'', and ``completely contractive completely positive maps'' by ``completely contractive maps''.

\begin{proposition}\cite[Proposition 3.7.6]{BO08}\label{P:localsplit} 
Let $\A$ be a C*-algebra and for an ideal $\J \lhd \A$ let
\begin{equation*}
\begin{tikzcd}
0 \arrow[r] & \J \arrow[r] & \A \arrow[r, "q"] & \A/\J \arrow[r] & 0 
\end{tikzcd}
\end{equation*}
be a locally split short exact sequence. 
Then for any C*-algebra $\B$ we have that the sequence
\begin{equation*}
\begin{tikzcd}
0 \arrow[r] & \J\otimes \B \arrow[r] & \A\otimes \B \arrow[r, "q\otimes \id_{\B}"] & (\A/\J)\otimes \B \arrow[r] & 0 
\end{tikzcd}
\end{equation*}
is short exact.
\end{proposition}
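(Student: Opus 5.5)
We follow the proof of \cite[Proposition 3.7.6]{BO08}, adapting it to the non-unital setting. Fix a C*-algebra $\B$. The maps $\J \otimes \B \to \A \otimes \B$ (injective, since the minimal tensor product is injective) and $q \otimes \id_\B$ (surjective, with dense range and closed range as a $*$-homomorphism) are standard, and clearly $\J\otimes\B \subseteq \ker(q\otimes\id_\B)$. The only issue is the reverse inclusion $\ker(q \otimes \id_\B) \subseteq \J \otimes \B$. Equivalently, we need to show that the canonical $*$-epimorphism
\[
\pi \colon (\A \otimes \B)/(\J \otimes \B) \to (\A/\J) \otimes \B
\]
is isometric. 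By density it suffices to show $\|\pi^{-1}\|\le 1$ on the image of the algebraic tensor product $(\A/\J)\odot\B$. This means that for every $x = \sum_{i=1}^n q(a_i)\otimes b_i$ we must find a lift in $\A\otimes\B$ of norm at most $\|x\|_{(\A/\J)\otimes\B}$ modulo $\J\otimes\B$.

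To produce this lift, let $E = \spn\{q(a_1),\dots,q(a_n)\}\subseteq \A/\J$, a finite-dimensional operator space. By local splitting there is a completely contractive $\theta\colon E\to \A$ with $q\circ\theta=\id_E$. Since $\theta$ is completely contractive, $\theta\otimes\id_\B \colon E\otimes\B \to \A\otimes\B$ is contractive for the minimal (spatial) operator space tensor norms. The minimal tensor norm is injective, so the norm of $x$ in $E\otimes \B$ equals its norm in $(\A/\J)\otimes\B$. Hence $y := (\theta\otimes\id_\B)(x)$ satisfies $\|y\|\le\|x\|$. Moreover $(q\otimes\id_\B)(y)=x$, and because $y$ and $\sum a_i\otimes b_i$ both map to $x$, their difference lies in the kernel. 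The remaining question is whether the class of $y$ in $(\A\otimes\B)/(\J\otimes\B)$ is the same as the chosen preimage of $x$.

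This is the step I expect to be the main obstacle: we need $y - \sum_i a_i\otimes b_i \in \J\otimes\B$, and not merely in $\ker(q\otimes\id_\B)$. Here $\theta(q(a_i)) - a_i \in \J$ for each $i$, so
\[
y - \sum_i a_i\otimes b_i = \sum_i \big(\theta(q(a_i)) - a_i\big)\otimes b_i \in \J\odot\B \subseteq \J\otimes\B.
\]
Thus the class of $\sum_i a_i\otimes b_i$ in the quotient has norm at most $\|y\|\le\|x\|$. So $\pi$ is isometric on a dense $*$-subalgebra, and being a $*$-homomorphism between C*-algebras it is then isometric everywhere, hence injective. This yields the reverse inclusion and exactness in the middle.

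The only changes from the unital argument are these: the operator-space minimal tensor product replaces the operator-system one (its injectivity and its functoriality for completely bounded maps hold verbatim), and no unitality of $\theta$ is used. So the proof goes through exactly as the paper indicates.
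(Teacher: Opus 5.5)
Your proof is correct and takes essentially the paper's route. The paper simply defers to the argument of \cite[Proposition 3.7.6]{BO08}, with finite-dimensional operator spaces and completely contractive lifts replacing operator systems and unital completely positive lifts. Your write-up is exactly that argument: the local lift $\theta$ makes $\theta\otimes\id_\B$ contractive on $E\otimes\B$, and the discrepancy $\sum_i(\theta(q(a_i))-a_i)\otimes b_i$ lies in $\J\odot\B$, so the quotient norm on $(\A/\J)\odot\B$ is the minimal one.
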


The following two theorems give the characterisation of exactness through local reflexivity and local splits, see for example \cite[Theorem 10.2.4]{BO08}.
We include a proof for the unital and the non-unital versions separately.

\begin{theorem}\label{T:Exlocunital}\cite[unital version of Theorem 10.2.4]{BO08}
Let $\A$ be a unital C*-algebra and for an ideal $\J \lhd \A$ let
\begin{equation*}
\begin{tikzcd}
0 \arrow[r] & \J \arrow[r] & \A \arrow[r, "q"] & \A/\J \arrow[r] & 0 
\end{tikzcd}
\end{equation*}
be a short exact sequence with $\J$ and $\A/\J$ exact. 
The following are equivalent:
\begin{enumerate}
\item The short exact sequence is locally split.
\item For every finite-dimensional unital operator system $E \subseteq \A/\J$ there exists a unital completely positive map $\theta \colon E \to \A$ such that $q \circ \theta = \id_E$.
\item For every finite-dimensional unital operator system $E \subseteq \A/\J$ there exists a completely contractive completely positive map $\theta \colon E \to \A$ such that $q \circ \theta = \id_E$.
\item $\A$ is exact.
\end{enumerate}
\end{theorem}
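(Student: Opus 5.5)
We prove the cycle (i) $\Rightarrow$ (iii) $\Rightarrow$ (ii) $\Rightarrow$ (i), and then (iv) $\Leftrightarrow$ (i).

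The implications among (i)--(iii) are largely formal. For (iii) $\Rightarrow$ (ii), a unital completely positive map is completely contractive. For (ii) $\Rightarrow$ (i), let $E \subseteq \A/\J$ be a finite-dimensional operator space. Enlarge it to the finite-dimensional unital operator system $\wt E := \spn\{1, E, E^*\}$. By (ii) there is a unital completely positive map $\theta \colon \wt E \to \A$ with $q\circ\theta = \id_{\wt E}$. Unital completely positive maps are completely contractive, so the restriction $\theta|_E$ is the required completely contractive lift.

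For (i) $\Rightarrow$ (iii), start from a finite-dimensional unital operator system $E$. Local splitness gives a completely contractive lift $\theta_0 \colon E \to \A$, but $\theta_0$ need not be positive. We repair this in three steps.
\begin{itemize}
\item[(1)] Replace $\theta_0$ by its self-adjoint part $\frac12(\theta_0 + \theta_0^*)$, where $\theta_0^*(x) := \theta_0(x^*)^*$. This is still a completely contractive lift, since $E$ is self-adjoint.
\item[(2)] Apply the Wittstock--Paulsen off-diagonal trick. The map $x \mapsto \begin{pmatrix} 1 & \theta_0(x) \\ \theta_0(x^*)^* & 1\end{pmatrix}$ is unital completely positive on the Paulsen system of $E$.
\item[(3)] Alternatively, use the standard argument of \cite[Lemma 3.7.x / Corollary B.?]{BO08}: a self-adjoint unital completely contractive map on an operator system is completely positive. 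This works after first correcting unitality by adding a small multiple of $(1-\theta_0(1))$, which lies in $\J$, and then renormalising.
\end{itemize}
The cleanest route is the known fact that a finite-dimensional lift can be perturbed to a unital completely positive one using a quasicentral approximate unit $(e_\la)$ of $\J$. Set
\[
\theta_\la(x) = (1-e_\la)^{1/2}\theta_0(x)(1-e_\la)^{1/2} + \phi(x) e_\la ,
\]
where $\phi$ is a state on $E$. As $\la$ grows this is approximately unital completely positive. An approximate lift can then be corrected to an exact lift, because $E$ is finite-dimensional and $q\circ\theta_\la = \id_E$ holds exactly. This positivity repair is the main obstacle. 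We would follow the proof of \cite[Theorem 10.2.4]{BO08} and \cite[Proposition 3.7.6]{BO08} closely here.

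For (i) $\Rightarrow$ (iv), let $\B$ be any C*-algebra with ideal $\I$. Consider the commutative $3\times3$ diagram with rows $0\to\J\otimes\cdot\to\A\otimes\cdot\to(\A/\J)\otimes\cdot\to0$ and columns indexed by $\I,\B,\B/\I$. The rows are exact by Proposition~\ref{P:localsplit}. The outer columns are exact because $\J$ and $\A/\J$ are exact. A diagram chase (the nine lemma) then shows the middle column is exact, so $\A$ is exact.

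For (iv) $\Rightarrow$ (iii), we invoke the Effros--Haagerup lifting theorem. If $\A$ is exact, it is locally reflexive, and the extension admits local unital completely positive lifts for finite-dimensional operator systems. The argument reproduces the proof of \cite[Theorem 10.2.4]{BO08}: use nuclearity of the inclusion $E \hookrightarrow \A/\J$ coming from exactness via Kirchberg's theorem, lift the matrix-valued maps using Arveson's extension theorem together with local reflexivity, and finally correct unitality as above.
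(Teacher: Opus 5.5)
Your step labelled (iii) $\Rightarrow$ (ii) is argued in the wrong direction. The remark ``a unital completely positive map is completely contractive'' proves (ii) $\Rightarrow$ (iii). As written, nothing in your cycle turns a completely contractive completely positive lift into a \emph{unital} one, so (ii) is never derived from the other conditions.

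The missing step is the unitality correction the paper uses. Let $\theta \colon E \to \A$ be completely contractive and completely positive with $q \circ \theta = \id_E$, where $E$ is a unital operator system. Then $0 \leq \theta(1) \leq 1_\A$ and $q(\theta(1)) = 1_{\A/\J}$, so $1_\A - \theta(1)$ is a \emph{positive} element of $\J$. For any state $\omega$ on $E$ set $\theta' := \theta + (1_\A - \theta(1))\,\omega$. This map is completely positive as a sum of completely positive maps, it satisfies $\theta'(1) = 1_\A$, and it still has $q \circ \theta' = \id_E$. No renormalisation is needed.

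Your item (3) gestures at this, but applies it to the merely completely contractive $\theta_0$. There, adding $(1_\A - \theta_0(1))\,\omega$ makes the lift unital but gives no control of complete contractivity. So you cannot then invoke ``unital and completely contractive implies completely positive''.

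Your direct attack on (i) $\Rightarrow$ (iii) is not a proof either:
\begin{enumerate}
\item Averaging $\theta_0$ with $x \mapsto \theta_0(x^*)^*$ does not create positivity.
\item The Paulsen-system construction produces a unital completely positive map into $M_2(\A)$, not a completely positive map $E \to \A$.
\item For $\theta_\la$, the claims ``approximately completely positive'' and ``an approximate lift can be corrected to an exact one'' are exactly the non-trivial content. You would need a perturbation of almost-unital completely contractive maps on $E$ to nearby unital completely positive maps \emph{with values in $\A$}. That perturbation destroys the identity $q \circ \theta_\la = \id_E$, so you would then need something like Arveson's theorem that liftable unital completely positive maps form a point-norm closed set.
\end{enumerate}

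None of this is needed. Your (i) $\Rightarrow$ (iv) is the paper's $3 \times 3$ diagram chase (transposed), and your (iv) $\Rightarrow$ (iii) is the same appeal to \cite[Theorem 10.2.4]{BO08} that the paper makes. Positivity of the lift comes from exactness of $\A$. Ordering the cycle as (i) $\Rightarrow$ (iv) $\Rightarrow$ (iii) $\Rightarrow$ (ii) $\Rightarrow$ (i), with the unitality trick above inserted, closes the argument.

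A side remark on your sketch of (iv) $\Rightarrow$ (iii): exactness does not make the inclusion $E \hookrightarrow \A/\J$ nuclear. If it did for every finite-dimensional $E$, Arveson's extension theorem would force $\A/\J$ to be nuclear. Exactness only gives a nuclear faithful representation into some $\B(H)$.
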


\begin{proof}
The implication [(i) $\Rightarrow$ (iv)] is \cite[Exercise 3.9.8]{BO08}.
We provide a proof to show how Proposition~\ref{P:localsplit} is used.
Suppose that $\A/\J$ and $\J$ are exact and that the short exact sequence is locally split. 
Let $\I$ be an ideal in a C*-algebra $\B$ and denote by $q'\colon \B \to \B/\I$ the quotient map. 
Consider the following commutative diagram
\[
\begin{tikzcd}
& 0 \arrow[d] 
& 0 \arrow[d] 
& 0 \arrow[d] 
& \\
0 \arrow[r] 
& \J\otimes \I \arrow[r] \arrow[d] 
& \J\otimes \B \arrow[r] \arrow[d] 
& \J\otimes (\B/\I) \arrow[r] \arrow[d] 
& 0 \\
0 \arrow[r] 
& \A\otimes \I \arrow[r] \arrow[d] 
& \A\otimes \B \arrow[r] \arrow[d] 
& \A\otimes (\B/\I) \arrow[d] \\
0 \arrow[r] 
& (\A/\J)\otimes \I \arrow[r] \arrow[d] 
& (\A/\J)\otimes \B \arrow[r] \arrow[d] 
& (\A/\J)\otimes (\B/\I) \arrow[r] \arrow[d] 
& 0 \\
& 0
& 0
& 0
&
\end{tikzcd}
\]
where exactness of the top and bottom rows follows from exactness of the C*-algebras $\J$ and $\A/\J$, respectively, and exactness of the columns follows from Proposition~\ref{P:localsplit}. 
We have to show that the middle row is also exact. 

To that end, let $x$ be in $\ker(\id_\A\otimes q')$ and we will show that $x\in \A\otimes \I$. 
By commutativity of the diagram and exactness of the bottom row we obtain 
\[
(q\otimes \id_\B)(x) \in (\A/\J)\otimes \I,
\]
hence we may pick $y\in \A\otimes \I$ such that $(q\otimes \id_\I) (y)= (q\otimes \id_\B)(x)$. 
Set $z:= x-y$ and note that $(q\otimes \id_\B) (z)=0$. 
In particular, by exactness of the middle column we obtain that $z\in \J\otimes \B$. 
Since $z\in \ker(\id_\A \otimes q')$, exactness of the top row implies that $z\in \J \otimes \I \subseteq \A \otimes \I$.
We conclude that 
\[
x= x-y+y=z+y \in \A \otimes \I,
\]
as required.

The equivalence [(iii) $\Leftrightarrow$ (iv)] is the version proven in \cite[Theorem 10.2.4]{BO08} if one follows verbatim the proofs for finite-dimensional operator subsystems and completely contractive completely positive maps.

The implication [(iii) $\Rightarrow$ (ii)] is a standard trick exhibited in the comments prior to \cite[Proposition C.2]{BO08}.
Namely, if for a finite-dimensional unital operator system $E \subseteq \A/\J$ there is a completely contractive completely positive map $\theta\colon E \to \A$ such that $q \circ \theta = \id_E$, then let $\om \colon E \to \bC$ be a unital completely positive map and replace $\theta$ by $\theta + (1_\A -\theta(1_{\A/\J}))\om$.

For the implication [(ii) $\Rightarrow$ (i)] let $E \subseteq \A/\J$ be a finite-dimensional operator space and apply the hypothesis for the unital operator system $E'$ generated by $E$ to obtain a unital completely positive map $\theta'\colon E' \to \A$ such that $q \circ \theta' = \id_{E'}$.
Then the completely contractive restriction map $\theta := \theta'|_E$ is the required local lift.
\end{proof}

\begin{theorem}\label{T:Exlocgen}\cite[Theorem 10.2.4]{BO08}
Let $\A$ be a C*-algebra and for an ideal $\J \lhd \A$ let
\begin{equation*}
\begin{tikzcd}
0 \arrow[r] & \J \arrow[r] & \A \arrow[r, "q"] & \A/\J \arrow[r] & 0 
\end{tikzcd}
\end{equation*}
be a short exact sequence with $\J$ and $\A/\J$ exact. 
The following are equivalent:
\begin{enumerate}
\item The short exact sequence is locally split.
\item For every finite-dimensional selfadjoint subspace $E \subseteq \A/\J$ there exists a completely contractive completely positive map $\theta \colon E \to \A$ such that $q \circ \theta = \id_E$.
\item $\A$ is exact.
\end{enumerate}
\end{theorem}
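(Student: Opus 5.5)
The plan is to reduce to the unital case, Theorem~\ref{T:Exlocunital}, by passing to unconditional unitisations. Set $\A^\#$ and note that $\J \lhd \A^\#$ with $\A^\#/\J \cong (\A/\J)^\#$, via the map $q^\#(a + \mu 1) = q(a) + \mu 1$. Since $\J$ and $\A/\J$ are exact, so are $\J$ and $(\A/\J)^\#$. Hence the unital sequence
\[
0 \to \J \to \A^\# \xrightarrow{q^\#} (\A/\J)^\# \to 0
\]
satisfies the hypotheses of Theorem~\ref{T:Exlocunital}. Moreover, $\A$ is exact if and only if $\A^\#$ is.

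For [(iii) $\Rightarrow$ (ii)] we apply Theorem~\ref{T:Exlocunital} [(iv) $\Rightarrow$ (ii)] to $\A^\#$. Given a finite-dimensional selfadjoint $E \subseteq \A/\J$, let $E' = E + \bC 1$, which is a unital operator system in $(\A/\J)^\#$. We obtain a ucp $\theta' \colon E' \to \A^\#$ with $q^\# \theta' = \id_{E'}$, so $\theta'|_E$ is ccp. We still need its range to lie in $\A$. Write $\theta'(x) = \theta_0(x) + \chi(x)1$, where $\chi \colon \A^\# \to \bC$ is the character vanishing on $\A$. Then $\chi\circ\theta'$ is a positive functional on $E'$. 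Because $q^\#\theta'(x) = x \in \A/\J$ for $x \in E$, applying the character of $(\A/\J)^\#$ shows $\chi(\theta'(x)) = 0$, so $\theta'(E) \subseteq \A$ automatically. This step is the main point to get right: the characters must be compatible, $\chi_{\A/\J}\circ q^\# = \chi_\A$, which holds by construction of $q^\#$. So $\theta := \theta'|_E$ is a ccp map into $\A$ lifting $\id_E$.

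[(ii) $\Rightarrow$ (i)] is immediate. Given a finite-dimensional operator space $E$, take its selfadjoint span $E + E^*$, apply (ii), and restrict; a ccp map is completely contractive.

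For [(i) $\Rightarrow$ (iii)] we repeat verbatim the diagram chase of the proof of Theorem~\ref{T:Exlocunital} [(i) $\Rightarrow$ (iv)]. That argument never used a unit. It only used Proposition~\ref{P:localsplit}, which is already stated in the non-unital, completely contractive form, together with exactness of $\J$ and $\A/\J$ for the top and bottom rows. Alternatively, one could lift through $\A^\#$, but the direct chase suffices.
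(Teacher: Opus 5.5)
Your proposal is correct and follows essentially the same route as the paper: you pass to the unconditional unitisation and apply the unital Theorem~\ref{T:Exlocunital} to obtain a ucp lift on $E + \bC 1$, you show that its range on $E$ lies in $\A$ (your compatible-characters argument is the paper's ``$\mu = 0$'' step), and you handle (ii)$\Rightarrow$(i) by restriction and (i)$\Rightarrow$(iii) by the unit-free diagram chase. The only cosmetic difference is that you get exactness of $\A^\#/\J \cong (\A/\J)^\#$ from the hypothesis on $\A/\J$, whereas the paper gets it as a quotient of the exact $\A^\#$.
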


\begin{proof}
The implication [(i) $\Rightarrow$ (iii)] is the same as [(i) $\Rightarrow$ (iv)] of Theorem~\ref{T:Exlocunital}.

For the implication [(iii) $\Rightarrow$ (ii)] we write $\A^\#$ for the unconditional unitisation of $\A$.
Since $\A$ is exact, $\A^\#$ is exact and therefore so is $\A^\#/\J$.
Consider the short exact sequence 
\[
\begin{tikzcd}
0 \arrow[r] & \J \arrow[r] & \A^\# \arrow[r, "q^\#"] & \A^\#/\J \arrow[r] & 0
\end{tikzcd}
\]
where $q^\#$ is the natural unital map that extends $q$. 

Let $E\subseteq \A/\J$ be a finite-dimensional selfadjoint subspace, and denote by $\wt{E}$ the operator system generated by $E$ inside $\A^\#/\J$. 
Since $\A^\#$ is exact, by Theorem~\ref{T:Exlocunital} we obtain a unital completely positive map $\wt{\theta}\colon \wt{E} \to \A^\#$ such that $q^\#\circ\wt{\theta} = \id_{\wt{E}}$. 
Consider the commutative diagram
\[
\begin{tikzcd}
\A \arrow[r] \arrow[d, "q"] & \A^\# \arrow[d,"q^\#"] \\
\A/\J \arrow[r] & \A^\#/\J
\end{tikzcd}
\]
For $x\in E$ we have $\wt{\theta}(x) = a + \mu 1_{\A^\#}$ where $a\in \A$ and $\mu \in \bC$. 
Moreover, we have
\[
x = q^\#\circ \wt{\theta}(x) = q(a)+\mu 1_{\A^\#/\J},
\]
which implies that $\mu = 0$ as the unit of $\A^\#/\J$ is not contained in $\A/\J$ (being the extension by one dimension as a vector space). 
In particular, we get $\wt{\theta}(E)\subseteq \A$. 
Set 
\[
\theta:= \wt{\theta}|_E \colon E \to \A,
\]
and note that $\theta$ is the required completely contractive and completely positive map satisfying 
\[
q \circ \theta (x) = q^\#|_\A \circ \wt{\theta}|_E(x) = x \foral x\in E.
\]

For the implication [(ii) $\Rightarrow$ (i)] let $E \subseteq \A/\J$ be a finite-dimensional operator space and apply the hypothesis for the selfadjoint subspace $E'$ generated by $E$ to obtain a completely contractive completely positive map $\theta'\colon E' \to \A$ such that $q \circ \theta' = \id_{E'}$.
Then the completely contractive restriction map $\theta := \theta'|_E$ is the required local lift.
\end{proof}

We now show the following gluing theorem for exactness.

\begin{theorem}\label{T:Exsum}
Let $\C$ be a C*-algebra and let $\J$ and $\A$ be C*-subalgebras of $\C$ such that $\J + \A$ is a C*-subalgebra of $\C$ and $\J \lhd \J + \A$.
If $\J$ and $\A$ are exact, then so is $\J + \A$.
\end{theorem}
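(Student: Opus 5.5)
We apply Theorem~\ref{T:Exlocgen} to the short exact sequence
\[
\begin{tikzcd}
0 \arrow[r] & \J \arrow[r] & \J + \A \arrow[r, "q"] & (\J+\A)/\J \arrow[r] & 0.
\end{tikzcd}
\]
By hypothesis $\J$ is exact. The quotient $(\J+\A)/\J$ is $*$-isomorphic to $\A/(\A\cap\J)$ by the second isomorphism theorem for C*-algebras. Here $\A \cap \J$ is an ideal of $\A$ because $\J \lhd \J + \A$. Since exactness passes to quotients, $(\J+\A)/\J$ is exact. It therefore suffices to verify item (i) or item (ii) of Theorem~\ref{T:Exlocgen}, that is, to produce local (completely positive) lifts.

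For the lifts we use the exactness of $\A$ through the sequence $0 \to \A\cap\J \to \A \xrightarrow{q|_\A} \A/(\A\cap\J) \to 0$. Its ideal $\A\cap\J$ and its quotient are exact, and its middle term $\A$ is exact. The implication [(iii) $\Rightarrow$ (ii)] of Theorem~\ref{T:Exlocgen} then says that for every finite-dimensional selfadjoint subspace $F \subseteq \A/(\A\cap \J)$ there is a completely contractive completely positive map $\theta_0 \colon F \to \A$ with $q|_\A \circ \theta_0 = \id_F$.

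Now let $E \subseteq (\J+\A)/\J$ be a finite-dimensional selfadjoint subspace. Let $\iota \colon \A/(\A\cap\J) \to (\J+\A)/\J$, $a + \A\cap\J \mapsto a + \J$, be the canonical $*$-isomorphism, and set $F := \iota^{-1}(E)$. This is a finite-dimensional selfadjoint subspace, and since $\iota^{-1}$ is a $*$-isomorphism it is completely isometric and completely positive. Define
\[
\theta := j \circ \theta_0 \circ \iota^{-1}|_E \colon E \to \J + \A,
\]
where $j \colon \A \hookrightarrow \J+\A$ is the inclusion. Then $\theta$ is completely contractive and completely positive. Moreover, for $x \in E$ we have $q(\theta(x)) = \iota(q|_\A(\theta_0(\iota^{-1}x))) = x$, since $q \circ j$ and $\iota \circ q|_\A$ agree on $\A$. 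Hence item (ii) of Theorem~\ref{T:Exlocgen} holds, and $\J+\A$ is exact.

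The argument is essentially bookkeeping. The one point needing care is that the isomorphism $(\J+\A)/\J \cong \A/(\A\cap\J)$ is compatible with the quotient maps, so that lifts into $\A$ are automatically lifts into $\J+\A$. Once that is checked, the local lifting property is inherited from exactness of $\A$ via Theorem~\ref{T:Exlocgen}, and Proposition~\ref{P:localsplit}, used inside Theorem~\ref{T:Exlocgen}, does the rest.
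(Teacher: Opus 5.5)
Your proposal is correct and follows the paper's proof: you take local lifts for $0 \to \J\cap\A \to \A \to \A/(\J\cap\A) \to 0$, which exist by Theorem~\ref{T:Exlocgen} since $\A$ is exact. You then transport them through the canonical $*$-isomorphism $(\J+\A)/\J \simeq \A/(\J\cap\A)$ and the inclusion $\A \hookrightarrow \J+\A$, and invoke Theorem~\ref{T:Exlocgen} for $0 \to \J \to \J+\A \to (\J+\A)/\J \to 0$. The only difference is that you verify item (ii) (completely positive lifts on selfadjoint subspaces) where the paper verifies item (i) (completely contractive lifts on operator spaces); these are equivalent in that theorem.
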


\begin{proof}
Let $\io \colon \A \to \J + \A$ be the canonical embedding and $\pi \colon (\J+\A)/\J \to \A/(\J\cap \A)$ be the canonical $*$-isomorphism. 
We obtain the following commutative diagram
\[
\begin{tikzcd}
0 \arrow[r] & \J\cap \A \arrow[r] & \A \arrow[r, "q_{\A}"] \arrow[d, "\io"] & \A/(\J \cap \A) \arrow[r] & 0 \\
0 \arrow[r] & \J \arrow[r] & \J + \A \arrow[r, "q"] & (\J + \A)/\J \arrow[r] \arrow[u, "\pi"] & 0
\end{tikzcd}
\]
and since $\A$ is exact, we get that $\J \cap \A$ and $\A/(\J \cap \A) \simeq (\J+\A)/\J$ are also exact. By Theorem~\ref{T:Exlocgen} we thus obtain that the top short exact sequence is locally split.
By assumption $\J$ is also exact, and hence by Theorem~\ref{T:Exlocgen} it suffices to show that the bottom short exact sequence is locally split.

Towards this end, let $E \subseteq (\J+\A)/\J$ be a finite-dimensional operator space.
Let $\theta \colon \pi(E) \to \A$ be a completely contractive map such that $q_\A \circ \theta (\pi(x)) = \pi(x)$ for all $x \in E$.
We then have
\[
q \circ \io \circ \theta \circ \pi(x) = \pi^{-1} \circ q_\A \circ \theta \circ \pi(x) = x \foral x \in E.
\]
Since $\io\circ\theta\circ\pi|_E$ is also completely contractive the proof is complete. 
\end{proof}

We close with the following remark that will be used subsequently.

\begin{remark} \label{R:Ex-ext}
Let $\A$ be an exact C*-algebra and $\J \subseteq \A$ be an ideal.
Let $E \subseteq \A/\J$ be a finite-dimensional selfadjoint subspace and let $\theta \colon E \to \A$ be the completely contractive completely positive local lift of $\id_E$ obtained in the proof of Theorem~\ref{T:Exlocgen}.
From the proof of Theorem~\ref{T:Exlocgen} the lift $\theta$ is given as the restriction of a unital completely positive map $\wt{\theta}\colon E + \bC 1_{\A^\#/\J} \to \A^\#$ for the unconditional unitisation $\A^\#$ of $\A$.
Suppose we are given a completely contractive and completely positive map 
\[
\al \colon \A \to M_n(\bC).
\]
We want to show that the completely contractive completely positive map 
\[
\al \circ \theta \colon E \to M_n(\bC)
\]
has a completely contractive completely positive extension to the entire $\A/\J$.

Towards this end, we may consider the canonical extension $\al^\# \colon \A^\# \to M_n(\bC)$ which is a unital completely positive map, see for example \cite[Lemma 4.9 (c) and Corollary 4.17]{Wer02}.
Then we obtain a unital completely positive map
\[
\al^\# \circ \wt{\theta} \colon E + \bC 1_{\A^\#/\J} \to M_n(\bC),
\]
which by Arveson's Extension Theorem has a unital completely positive extension
\[
\wt{\al^\# \circ \wt{\theta}} \colon \A^\#/\J \to M_n(\bC).
\]
By construction we have that $\wt{\al^\# \circ \wt{\theta}}|_{\A/\J}$ is a completely contractive completely positive extension of $\al \circ \theta$. 
\end{remark}

\subsection{Nuclear embeddings}

We next pass to the theory of nuclearity and nuclear maps.

\begin{definition}
Let $\A$ be a C*-algebra. 
We say that $\A$ is \emph{nuclear} if for every C*-algebra $\B$ the canonical $*$-epimorphism $\A\otimes_{\max} \B \to \A\otimes \B$ is a $*$-isomorphism.
\end{definition}

The following remarkable result was obtained independently by Kirchberg \cite{Kir77} and by Choi--Effros \cite{CE78}.

\begin{theorem}\cite{CE78, Kir77}
Let $\A$ be a C*-algebra.
Then $\A$ is nuclear if and only if there exist nets of completely contractive completely positive maps $\phi_\la \colon \A \to M_{k_\la}(\bC)$ and $\psi_\la \colon M_{k_\la}(\bC) \to \A$ such that 
\[
\lim_\la \| \psi_\la \circ \phi_\la (a) - a \| = 0 \foral a\in \A.
\]
\end{theorem}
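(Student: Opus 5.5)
The plan is to prove the two implications separately. The direction from approximations to nuclearity is a soft tensor-product estimate. The converse is the genuine content of the Choi--Effros--Kirchberg theorem, and I would follow the route in \cite[Chapter 2--3]{BO08}.

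For the direction ``$\Leftarrow$'', fix a C*-algebra $\B$ and $x \in \A \odot \B$. It suffices to show $\|x\|_{\max} \leq \|x\|_{\min}$, since the reverse inequality always holds. The map $\phi_\la \otimes \id_\B \colon \A \otimes \B \to M_{k_\la}(\bC) \otimes \B$ is completely contractive, because completely contractive completely positive maps tensor contractively on minimal tensor products. Since $M_{k_\la}(\bC)$ is finite-dimensional, the max and min norms agree on $M_{k_\la}(\bC) \odot \B$. Moreover $\psi_\la \otimes \id_\B \colon M_{k_\la}(\bC) \otimes_{\max} \B \to \A \otimes_{\max} \B$ is contractive, because completely positive maps tensor to completely positive maps on maximal tensor products. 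Hence
\[
\|(\psi_\la\circ\phi_\la \otimes \id_\B)(x)\|_{\max} \leq \|x\|_{\min}.
\]
On the algebraic tensor product, $(\psi_\la\circ\phi_\la\otimes\id_\B)(x) \to x$ in $\|\cdot\|_{\max}$, since $x$ is a finite sum of elementary tensors and $\psi_\la \circ \phi_\la \to \id_\A$ pointwise in norm. This gives $\|x\|_{\max} \leq \|x\|_{\min}$.

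For ``$\Rightarrow$'' the steps are as follows.
\begin{enumerate}
\item Reduce to the unital case by passing to $\A^\#$. Nuclearity passes to unitisations, and an approximation for $\A^\#$ restricts to one for $\A$.
\item Show that the inclusion $\A \hookrightarrow \A^{**}$ is \emph{weakly nuclear}: it is a point-weak$^*$ limit of completely contractive completely positive maps factoring through matrix algebras. To get this, represent $\A^{**} \subseteq \B(H)$ and consider the commutant $\A'$ there. Nuclearity makes the product map $\A \odot \A' \to \B(H)$ bounded on $\A \otimes \A'$. Its vector states are then min-continuous, so they are weak$^*$ limits of states that factor through finite-dimensional pieces, via the usual description of states on spatial tensor products. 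Translating such states into maps $\A \to M_k(\bC) \to \B(H)$, in the spirit of the Choi correspondence between states on $M_k \otimes \B$ and maps $M_k \to \B$, gives the point-weak$^*$ factorisation.
\item Upgrade the point-weak$^*$ approximations of $\id_\A$ to point-norm ones. The key observation is that for finitely many $a_1,\dots,a_m \in \A$, the set
\[
\{(\psi\circ\phi(a_i) - a_i)_{i} : \phi,\psi \text{ factor through matrices}\}
\]
is convex (after taking direct sums of matrix algebras). Its weak closure in $\A^{\oplus m}$ therefore coincides with its norm closure, by Hahn--Banach.
\item Handle the range space: the maps $\psi$ land a priori in $\A^{**}$ rather than in $\A$. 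Since $\A$ is unital, the unit ball of $\A$ is weak$^*$-dense in that of $\A^{**}$ by Kaplansky density, and $M_k(\A)$ is weak$^*$-dense in $M_k(\A^{**})$ in a way compatible with positivity. So each $\psi \colon M_k(\bC) \to \A^{**}$ can be perturbed, through its Choi matrix, to a completely contractive completely positive map into $\A$ that is point-weak$^*$ close. Step 3 then gives the norm approximation with $\psi$ valued in $\A$.
\end{enumerate}

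I expect step 2 to be the main obstacle: extracting finite-dimensional factorisations from the purely tensorial hypothesis. If the state-decomposition argument becomes unwieldy, I would first try the alternative of proving that nuclearity of $\A$ implies injectivity of $\A^{**}$, and then invoking the equivalence of injectivity and semidiscreteness for von Neumann algebras to obtain weak nuclearity. That equivalence is itself deep, so I would quote it rather than reprove it.
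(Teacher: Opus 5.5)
The paper does not prove this statement; it quotes it from \cite{CE78, Kir77}, with a detailed proof in \cite{BO08}. So your proposal has to stand on its own.

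Your direction ($\Leftarrow$) is correct and is the standard argument. In ($\Rightarrow$), Steps~1, 3 and~4 are the usual reductions (to the unital case, and from weak nuclearity of $\A \hookrightarrow \A^{**}$ to point-norm approximations), up to two small repairs:
\begin{enumerate}
\item In Step~1 the maps $\psi$ take values in $\A^\#$, not in $\A$, so restriction alone does not suffice. You must compress them by an approximate unit of the ideal $\A \lhd \A^\#$.
\item In Step~4, approximating the Choi matrix by positive elements of $M_k(\A)$ does not control $\|\psi'(1)\|$. Instead, factor $\psi = C^*(\,\cdot\,\otimes 1)C$ with a column $C \in M_{k^2,1}(\A^{**})$ of norm at most one, and approximate $C$ in the strong$^*$ topology using Kaplansky density.
\end{enumerate}

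The genuine gap is Step~2, which carries the entire content of the theorem. As written, you translate the approximating states into maps $\A \to M_k(\bC) \to \B(H)$. This is vacuous: for any C*-algebra $\A \subseteq \B(H)$, the maps $a \mapsto P_F a P_F$, with $P_F$ ranging over finite-rank projections, already give point-weak$^*$ factorisations through matrix algebras into $\B(H)$. Step~4 needs $\psi$ with range in $\A'' = \A^{**}$, and the Choi correspondence does not provide this. A positive functional $g$ on $M_k(\bC) \otimes \A'$ only yields the map $b \mapsto g(b \otimes \,\cdot\,)$ into $(\A')^*$. To land in $\A''$ you need the commutant Radon--Nikodym theorem. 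Writing $g(b \otimes x') = \sca{\psi(b) x' \xi, \xi}$ with $\psi \colon M_k(\bC) \to \A''$ completely contractive forces the domination $g(1 \otimes x') \leq \sca{x' \xi, \xi}$ for all $x' \in \A'_+$. The weak$^*$ approximants of $a \otimes x' \mapsto \sca{a x' \xi, \xi}$ obtained by compressing both tensor factors to finite-dimensional subspaces have no reason to satisfy this, so they cannot be translated back. Producing approximants that do satisfy it is exactly the hard lemma: min-continuity of the product map $\A \odot \A' \to \B(H)$ forces $\A \hookrightarrow \A''$ to be weakly nuclear.

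Your fallback does not bypass this. Arveson-extending the product map to $\B(H) \otimes \A'$ and using that $1 \otimes \A'$ lies in the multiplicative domain yields only a ucp map $\B(H) \to \A''$ that fixes $\A$. That is Lance's weak expectation property, which $\B(H)$ itself has (for $\dim H = \infty$) even though $\B(H)^{**}$ is not injective. So ``nuclear $\Rightarrow$ $\A^{**}$ injective'' is a further substantive step you have not supplied, on top of quoting Connes' theorem.
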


We now give the definition of nuclear maps and record some of their standard properties. For more details see \cite{BO08}.

\begin{definition}
Let $\C$ and $\D$ be C*-algebras.
A completely contractive completely positive map $\Phi \colon \C \to \D$ is called \emph{nuclear} if there exist nets of completely contractive completely positive maps $\phi_\la \colon \C \to M_{k_\la}(\bC)$ and $\psi_\la \colon M_{k_\la}(\bC) \to \D$ such that $\lim_\la \| \psi_\la \circ \phi_\la (c) - \Phi(c) \| = 0 \foral c \in \C$.

If $\C$ is a C*-subalgebra of $\D$ and the inclusion map $\C \hookrightarrow \D$ is nuclear, then we say that \emph{$\C$ is nuclearly embedded in $\D$}.
\end{definition}

Equivalently, a map $\Phi \colon \C \to \D$ between C*-algebras is nuclear if and only if for every finite set $\F\subseteq \C$ and $\eps>0$ we may find $n\in \bN$ and completely contractive completely positive maps $\phi \colon \C \to M_n(\bC)$ and $\psi\colon M_n(\bC) \to \D$ such that 
\[
\| \psi \circ \phi (x) - \Phi(x) \|< \eps \foral x \in \F.
\]
Note that the matrix algebras appearing in the definition of a nuclear map can be substituted by any finite-dimensional C*-algebra.
Moreover, a C*-algebra $\C$ is nuclear (resp.\ exact) if and only if $\id_\C$ is a nuclear map (resp.\ it has a nuclear faithful $*$-representation); hence every finite-dimensional C*-algebra is nuclear (and thus also exact).
The composition of a nuclear map with a completely contractive completely positive map is also nuclear; hence (injective) $*$-homomorphisms from nuclear C*-algebras are nuclear.

Concerning direct sums, if $\C_i \subseteq \D$ is a nuclear embedding for $i=1,\dots,n$ and the $\C_i$'s are mutually orthogonal, then $\C:=\bigoplus_{i=1}^n \C_i$ is nuclearly embedded in $\D$.
If $\D$ is the C*-inductive limit of an increasing net of C*-subalgebras $(\D_i)_{i \in I}$, then $\D$ is nuclear if and only if $\D_i$ is nuclearly embedded in $\D$ for every $i \in I$.
For detailed proofs see \cite{BO08}.

We next see a gluing theorem for nuclear embeddings.

\begin{theorem}\label{T:Nucsum}
Let $\C$ be a C*-algebra and let $\J$ and $\A$ be C*-subalgebras of $\C$ such that $\J + \A$ is a C*-subalgebra of $\C$ and $\J \lhd \J + \A$.
If $\J$ and $\A$ are nuclearly embedded in $\C$, then so is $\J + \A$.
\end{theorem}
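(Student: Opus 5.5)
The plan is to glue a finite-dimensional approximation of the inclusion $\J \hookrightarrow \C$ with one of $\A \hookrightarrow \C$, using a quasicentral approximate unit. Fix a finite set $\F \subseteq \J+\A$ and $\eps>0$, and write each $x \in \F$ as $x = j_x + a_x$ with $j_x \in \J$ and $a_x \in \A$. Choose a quasicentral approximate unit $(e_\mu)$ for $\J$ relative to $\J+\A$; quasicentral approximate units exist for any ideal in a C*-algebra. Then for $e = e_\mu$ far enough out we have
\[
x \approx e^{1/2} x e^{1/2} + (1-e)^{1/2} x (1-e)^{1/2} \approx e^{1/2} x e^{1/2} + (1-e)^{1/2} a_x (1-e)^{1/2},
\]
where the second approximation uses $\|(1-e)j_x\| \to 0$. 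The map $x \mapsto e^{1/2}xe^{1/2}$ is completely positive and completely contractive from $\J+\A$ into $\J$, because $\J \lhd \J+\A$. Composing it with a finite-dimensional approximation $\phi_\J \colon \J \to M_m(\bC)$, $\psi_\J \colon M_m(\bC) \to \C$ of the nuclear inclusion $\J \hookrightarrow \C$ handles the first term.

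For the second term I take a finite-dimensional approximation $\phi \colon \A \to M_n(\bC)$, $\psi \colon M_n(\bC) \to \C$ of $\A \hookrightarrow \C$. The output map is
\[
(S,T) \mapsto e^{1/2}\psi_\J(S)e^{1/2} + (1-e)^{1/2}\psi(T)(1-e)^{1/2},
\]
computed in $\C^\#$; it lands in $\C$ and is completely contractive completely positive, since it is a compression by the isometric column $(e^{1/2},(1-e)^{1/2})$. The input map is $x \mapsto (\phi_\J(e^{1/2}xe^{1/2}), \wt\phi(x))$, where $\wt\phi \colon \J+\A \to M_n(\bC)$ should be a completely contractive completely positive map that agrees approximately with $\phi(a_x)$ on $\F$.

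The main obstacle is constructing $\wt\phi$. The decomposition $x = j_x + a_x$ is only defined modulo $\J\cap\A$. A naive Arveson extension of $\phi$ from $\A$ to $\J+\A$ has no control on the $\J$-parts, so the error term $(1-e)^{1/2}\psi(\wt\phi(j_x))(1-e)^{1/2}$ is not small. I would first replace $\phi$ by $\phi'(a) := \phi^\#((1-f)^{1/2} a (1-f)^{1/2})$, where $f$ runs through a quasicentral approximate unit of $\J\cap\A$ in $\A$ and $\phi^\#$ is the canonical unital extension to $\A^\#$ as in Remark~\ref{R:Ex-ext}. This map is approximately zero on the finitely many elements of $\J\cap\A$ that occur in the differences of representatives. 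It still approximates $\phi$ on the $a$'s modulo $\J\cap\A$, in the sense that $(1-e)^{1/2}\psi\phi'(a)(1-e)^{1/2}$ is close to $(1-e)^{1/2}a(1-e)^{1/2}$, because elements of $\J\cap\A$ are killed by the outer $(1-e)$-compression.

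Next, $\phi'$ should approximately factor through $\A/(\J\cap\A) \simeq (\J+\A)/\J$. Composing with the quotient map $\J+\A \to (\J+\A)/\J$ and applying Arveson's Extension Theorem in $M_n(\bC)$, I would extend the approximately well-defined map to all of $(\J+\A)/\J$. The goal is a completely contractive completely positive $\wt\phi$ on $\J+\A$ that vanishes on $\J$ and satisfies $\wt\phi(x) \approx \phi'(a_x)$ on $\F$. Making this precise is the expected hard step: an approximate factorization has to be upgraded to an exact completely positive map on the quotient. I would try a small perturbation argument on the finite-dimensional operator system spanned by the images of $\F$, combined with a diagonal choice of the nets $e$ and $f$.

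Once $\wt\phi$ is in hand, the sum of the two compositions approximates $x$ within $\eps$ on $\F$. Hence $\J+\A \hookrightarrow \C$ is nuclear.
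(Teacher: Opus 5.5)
Your overall architecture (a quasicentral approximate unit of $\J$ splitting $x$ into a part that lands in $\J$ and a part seen only modulo $\J$) matches the paper's. However, the step you flag as the main obstacle is left open, and the route you sketch does not close it.

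\textbf{The construction of $\wt\phi$.} Approximate vanishing of $\phi'$ on finitely many elements of $\J\cap\A$ does not make $[a]\mapsto\phi'(a)$ a (nearly) completely contractive completely positive (ccp) map on $\A/(\J\cap\A)$. So there is nothing to which Arveson's theorem can be applied.

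Worse, the order of choices is circular. The natural ways of upgrading $\phi'$ to a map vanishing exactly on $\J\cap\A$ both require the cut-down $f$ to be chosen after $\phi$ and $n$. This holds whether you pass to a limit over $f$ or use compactness of ccp maps into $M_n(\bC)$. Your estimate for $\psi\circ\phi'$, by contrast, requires $\phi$ to be chosen after $f$. With $\phi$ fixed first, as you write, $\psi\phi'(a)-\psi\phi(a)=\psi\phi(j')$ with $j'=(1-f)^{1/2}a(1-f)^{1/2}-a\in\J\cap\A$. Here $\psi\phi(j')$ is an uncontrolled element of $\C$, not of $\J$, so the outer $(1-e)$-compression does not remove it.

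The missing idea is local liftability. Since $\A\hookrightarrow\C$ is nuclear, $\A$ is exact. Theorem~\ref{T:Exlocgen} then gives a ccp map $\theta\colon E\to\A$ with $q_\A\circ\theta=\id_E$ on $E=\spn\{\pi\circ q(x),(\pi\circ q(x))^*\mid x\in\F\}$.

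Because $\theta$ is completely contractive at every matrix level, it can be fixed before $\al_\A\colon\A\to M_n(\bC)$. One then chooses $\al_\A$ to approximate on the elements $\theta(\pi\circ q(x))$. Remark~\ref{R:Ex-ext} extends $\al_\A\circ\theta$ to a ccp map on the whole quotient. The map $\wh{\al}_\A:=\wt{\al_\A\circ\theta}\circ\pi\circ q$ then vanishes on $\J$ and equals $\al_\A(\theta(\pi\circ q(x)))$ on $\F$. The remaining discrepancy $\theta(\pi\circ q(x))-x$ lies in $\J$, which is exactly what the outer compression kills.

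\textbf{The $\J$-part of your maps.} You compress by $e^{1/2}$ both in the input map and in the output map. Consequently $\Psi\circ\Phi(x)\approx exe+(1-e)^{1/2}x(1-e)^{1/2}\approx x-e(1-e)x$. The term $e(1-e)x$ is not small for $x\notin\J$. For instance, in $\C=C[0,1]$ with $\J=C_0((0,1])$, $\A=\bC 1$ and $x=1$, the error is $\|e-e^2\|_\infty=1/4$ for every $e$ far out.

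Dropping either compression does not help. Without the input compression you lose $e^{1/2}xe^{1/2}\in\J$. Without the output compression you lose contractivity of the output map.

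The paper fixes this with two functions of the same $e_\la$. The input is compressed by $V=v(e_\la)\in\J$, and the output by $U=u(e_\la)$ and $K=k(e_\la)$, where $uv=u$ and $u^2+k^2=1$. This gives $U(VxV)U=UxU$ while keeping $\Psi(1\oplus 1)\leq U^2+K^2=1$.
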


\begin{proof}
To fix notation let
\[
q \colon \J + \A \to (\J+\A)/\J
\qand
q_\A \colon \A \to \A/(\J \cap \A)
\]
be the canonical quotient maps, and consider the $*$-isomorphism
\[
\pi \colon (\J+\A)/\J \to \A/(\J\cap\A),
\]
that satisfies $\pi \circ q|_\A = q_\A$.
Henceforth, fix a finite set $\F \subseteq \J+\A$ of contractions and $\eps>0$, and consider the finite-dimensional selfadjoint subspace
\[
E := \spn\{ \pi \circ q (x), \pi \circ q(x)^* \mid x \in \F \} \subseteq \A/(\J\cap\A).
\]
Since $\A$ is nuclearly embedded in $\C$ it is also exact, and hence so are $\J \cap \A$ and $\A/(\J \cap \A)$.
Therefore Theorem~\ref{T:Exlocgen} applies to the short exact sequence
\begin{equation*}
\begin{tikzcd}
0 \arrow[r] & \J \cap \A \arrow[r] & \A \arrow[r,"q_{\A}"] & \A/(\J \cap \A) \arrow[r] & 0
\end{tikzcd}
\end{equation*}
and yields a completely contractive completely positive map $\theta \colon E \to \A$ such that $q_\A \circ \theta = \id_E$.
Since $\theta(\pi \circ q(x)) \in \A$ we get
\[
\pi \circ q \circ \theta(\pi \circ q(x)) = q_\A \circ \theta(\pi \circ q(x)) = \pi \circ q(x),
\]
and since $\pi$ is a $*$-isomorphism we get
\[
q \circ \theta(\pi \circ q(x)) = q(x).
\]
Hence we obtain
\[
b_x := \theta(\pi \circ q(x)) - x \in \J \foral x \in \F.
\]

Since $\A$ is nuclearly embedded in $\C$, for the finite set of contractions $\{ \theta(\pi \circ q(x)) \mid x \in \F\}$ and for the fixed $\eps>0$ there exist $n \in \bN$ and completely contractive completely positive maps $\al_\A \colon \A \to M_n(\bC)$ and $\be_\A \colon M_n(\bC) \to \C$ such that
\begin{equation}\label{Eq:nucquot}
\| \be_\A \circ \al_\A (\theta(\pi \circ q(x))) - \theta(\pi \circ q(x)) \| < \eps \foral x \in \F.
\end{equation}
By Remark~\ref{R:Ex-ext} there exists a completely contractive completely positive map 
\[
\wt{\al_\A \circ \theta} \colon \A/(\J \cap \A) \to M_n(\bC) \text{ such that } \wt{\al_\A \circ \theta}|_E = \al_\A \circ \theta.
\]
By setting
\[
\wh{\al}_\A := \wt{\al_\A \circ \theta} \circ \pi \circ q \colon \J + \A \to M_n(\bC),
\]
we obtain a completely contractive completely positive map with
\[
\wh{\al}_\A(x) = \wt{\al_\A \circ \theta}(\pi \circ q (x)) = \al_\A(\theta(\pi \circ q(x)))
\foral x \in \F.
\]
Using \eqref{Eq:nucquot} we obtain
\begin{equation}\label{Eq:nucGa}
\| \be_\A \circ \wh{\al}_\A (x) - x - b_x \| = \| \be_\A \circ \al_\A(\theta(\pi \circ q(x))) - \theta(\pi \circ q(x)) \| < \eps \foral x \in \F.
\end{equation}

We next use the canonical embeddings
\[
\J^\# \subseteq (\J + \A)^\# \subseteq \C^\#
\]
for the unconditional unitisations.
Fix continuous functions $u, v \colon [0,1] \to [0,1]$ such that
\[
u(s) = 0 \qfor s \leq 1/2,
\quad
u(1) = 1,
\quad
v(s) = 0 \qfor s \leq 1/4,
\quad
v(s) = 1 \qfor s \geq 1/2,
\]
and set $k := (1 - u^2)^{1/2}$.
By construction we have
\begin{equation}\label{Eq:nucfun}
u^2 + k^2 = 1,
\quad
u v = u,
\quad
u(0) = v(0) = 0
\qand
k(1) = 0.
\end{equation}
By \cite[Theorem 3.12.14]{Ped79} we may pick an approximate unit $(e_\la)_\la$ of $\J$ consisting of positive contractions which is quasicentral in $\J+\A$, that is
\[
\lim_\la \| e_\la y - y e_\la \| = 0 \foral y \in \J + \A.
\]

We will use two standard consequences of the continuous functional calculus, which we perform in $(\J+\A)^\#$.
Note that since each $e_\la$ is a positive contraction we have $\si(e_\la) \subseteq [0,1]$, and therefore
\begin{equation}\label{Eq:nucfc}
\| f(e_\la) \| \leq \|f\|_\infty := \sup\{ |f(s)| \mid s \in [0,1] \} \foral f \in C[0,1].
\end{equation}
 
 Firstly, quasicentrality passes to the continuous functional calculus, in the sense that
\begin{equation}\label{Eq:nucqc}
\lim_\la \| f(e_\la) y - y f(e_\la) \| = 0 \foral y \in \J+\A \AND f \in C[0,1].
\end{equation}
Indeed, for $j \geq 1$ the telescoping identity
\[
e_\la^j y - y e_\la^j = \sum_{i=0}^{j-1} e_\la^i \, ( e_\la y - y e_\la ) \, e_\la^{j-1-i}
\]
together with $\|e_\la\| \leq 1$ gives 
\[
\|e_\la^j y - y e_\la^j\| \leq j \|e_\la y - y e_\la\|.
\]
As scalar multiples of the unit are central, for every polynomial $p(s) = \sum_{j=0}^N c_j s^j$ we obtain
\[
\| p(e_\la) y - y p(e_\la) \| \leq \Big( \sum_{j=1}^N j |c_j| \Big) \| e_\la y - y e_\la \| \xrightarrow{\ \la \ } 0.
\]
Now let $f \in C[0,1]$ and $\eps'>0$, and use the Stone--Weierstrass Theorem to pick a polynomial $p$ with $\|f - p\|_\infty < \eps'$, so that $\|f(e_\la) - p(e_\la)\| < \eps'$ for all $\la$ by \eqref{Eq:nucfc}.
Then
\[
\limsup_\la \| f(e_\la) y - y f(e_\la) \|
\leq
2 \eps' \|y\| + \limsup_\la \| p(e_\la) y - y p(e_\la) \|
=
2 \eps' \|y\|,
\]
and letting $\eps' \to 0$ establishes \eqref{Eq:nucqc}.
 
Secondly, since $(e_\la)_\la$ is an approximate unit for $\J$ we have
\begin{equation}\label{Eq:nucau}
\lim_\la \| f(e_\la) d - f(1) d \| = 0 \foral d \in \J \AND f \in C[0,1].
\end{equation}
Indeed, for $f(s) = s^j$ with $j \geq 1$ this follows by induction on $j$ from the identity
\[
e_\la^j d - d = e_\la^{j-1} ( e_\la d - d ) + ( e_\la^{j-1} d - d )
\]
and $\|e_\la\| \leq 1$, and it also holds trivially for $j=0$.
By linearity it holds for $f$ being a polynomial.
For arbitrary $f \in C[0,1]$ and $\eps' > 0$ pick a polynomial $p$ with $\|f - p\|_\infty < \eps'$.
Then we get $|f(1) - p(1)| < \eps'$, and we also have
\[
\| f(e_\la) d - f(1) d \|
\leq
\| f(e_\la) - p(e_\la) \| \, \|d\| + \| p(e_\la) d - p(1) d \| + | p(1) - f(1) | \, \|d\|.
\]
Therefore by using \eqref{Eq:nucfc} we get
\[
\limsup_\la \| f(e_\la) d - f(1) d \| \leq 2 \eps' \|d\|,
\]
and \eqref{Eq:nucau} follows.
Applying \eqref{Eq:nucau} to $f = k$ and using $k(1) = 0$ from \eqref{Eq:nucfun} we obtain
\begin{equation}\label{Eq:nuck}
\lim_\la \| k(e_\la) d \| = 0 \foral d \in \J.
\end{equation}
 
Using \eqref{Eq:nucqc} and \eqref{Eq:nuck} we may fix $\la$ such that the positive contractions
\[
U := u(e_\la) \in \J,
\quad
V := v(e_\la) \in \J
\qand
K := k(e_\la) \in (\J+\A)^\#
\]
satisfy
\begin{equation}\label{Eq:nucpart}
\| x - U x U - K x K \| < \eps
\qand
\| K b_x K \| < \eps
\foral x \in \F.
\end{equation}
Indeed, the fact that $U, V \in \J$ follows by noting that $u(0)=v(0)=0$ (and hence $u$, $v$ can be taken as limits of polynomials with zero constant term) and $e_\la \in \J$. 
The first estimate follows by using the equality
\[
U x U + K x K = U^2 x + K^2 x + U[x,U] + K[x,K] = x + U[x,U]+K[x,K],
\]
which is obtained by \eqref{Eq:nucfun}, and noting that both commutators vanish in the limit by \eqref{Eq:nucqc}. 
The second estimate follows from \eqref{Eq:nuck} and the fact that $b_x\in \J$.

Since $\J$ is an ideal of $\J + \A$ we have $V y V \in \J$ for every $y \in \J + \A$. 
In particular, we have that the finite set $\{V x V \mid x\in \F\}$ sits inside $\J$.
Using that $\J$ is nuclearly embedded in $\C$, we may pick $m \in \bN$ and completely contractive completely positive maps $\al_\J \colon \J \to M_m(\bC)$ and $\be_\J \colon M_m(\bC) \to \C$ such that
\begin{equation}\label{Eq:nucid}
\| \be_\J \circ \al_\J (V x V) - V x V \| < \eps \foral x \in \F.
\end{equation}

To complete the proof, consider the maps
\begin{align*}
\Phi & \colon \J + \A \to M_m(\bC) \oplus M_n(\bC);\
\Phi(y) = \al_\J(V y V) \oplus \wh{\al}_\A(y),
\end{align*}
and
\begin{align*}
\Psi & \colon M_m(\bC) \oplus M_n(\bC) \to \C;\
\Psi(y_1 \oplus y_2) = U \be_\J(y_1) U + K \be_\A(y_2) K.
\end{align*}
The map $\Psi$ indeed takes values in $\C$ as $\C$ is an ideal of $\C^\#$.
The map $\Phi$ is completely positive and completely contractive as a direct sum of such maps.
The map $\Psi$ is completely positive, and since $\be_\J$ and $\be_\A$ are contractive we have $\be_\J(1) \leq 1$ and $\be_\A(1) \leq 1$ in $\C^\#$, whence
\[
\Psi(1 \oplus 1) = U \be_\J(1) U + K \be_\A(1) K \leq U^2 + K^2 = 1.
\]
As completely positive maps on unital C*-algebras attain their cb-norm at the unit, we conclude that $\Psi$ is completely contractive.

It remains to estimate $\Psi \circ \Phi$ on $\F$.
Since $UV = VU = U$ by \eqref{Eq:nucfun}, equation \eqref{Eq:nucid} gives
\[
\| U \be_\J \circ \al_\J (V x V) U - U x U \|
=
\| U \big( \be_\J \circ \al_\J (VxV) - VxV \big) U \|
< \eps,
\]
while \eqref{Eq:nucGa} and the second estimate of \eqref{Eq:nucpart} give
\[
\| K \be_\A \circ \wh{\al}_\A(x) K - K x K \|
\leq
\| K \big( \be_\A \circ \wh{\al}_\A (x) - x - b_x \big) K \| + \| K b_x K \|
<
2 \eps.
\]
Combining with the first estimate of \eqref{Eq:nucpart}, for every $x\in \F$, we obtain
\begin{align*}
\| \Psi \circ \Phi(x) - x \|
& =
\| U \be_\J \circ \al_\J (V x V) U + K \be_\A \circ \wh{\al}_\A(x) K - x \| \\
& \leq
\| U \be_\J \circ \al_\J (V x V) U - U x U \|
+ \| K \be_\A \circ \wh{\al}_\A(x) K - K x K \| + \\
& \hspace{5.34cm} + \| U x U + K x K - x \| \\
& <
\eps + 2 \eps + \eps
=
4 \eps.
\end{align*}
Since $\F$ and $\eps$ were arbitrary, the inclusion of $\J + \A$ in $\C$ is nuclear and the proof is complete.
\end{proof}

\section{Product systems}\label{S:ps}

\subsection{Notation}

If $\X$ is a Banach space, then for a subset $\Y\subseteq \X$ we write $[ \Y ]$ for the closed linear span of $\Y$. 
Whenever $P$ is a unital subsemigroup of a discrete group $G$ we assume that $e_P=e_G$ for their units.

\subsection{Semigroups and constructible ideals} \label{Ss:sgp}

We include this short subsection to fix notation for semigroups and their constructible ideals. 
For more details see \cite{Li12}.
Let $P$ be a unital subsemigroup of a discrete group $G$.
We write
\[
P^* := \{ p \in P \mid p q = q p = e_P \text{ for some } q \in P\}
\]
for its group of units.
For $Z \subseteq P$ and $p \in P$ we write
\[
p Z := \{ p q \mid q \in Z \}
\qand
p^{-1} Z := \{ q \in P \mid p q \in Z \},
\]
so in particular $p^{-1} P = P$. 
The right ideals $pP$ for $p\in P$ are called \emph{principal} right ideals.
We write $\J$ for the smallest family of right ideals of $P$ containing $P$ and $\mt$ that is closed under left multiplication, under taking pre-images under left multiplication, and under finite intersections.
By \cite[Lemma 3.3]{Li12} this family admits the reduced form
\[
\J = \left\{ q_n^{-1} p_n \dots q_1^{-1} p_1 P \mid n \in \bN;
p_i, q_i \in P \right\} \cup \{\mt\},
\]
and its elements are called \emph{constructible} right ideals.
We write $\Bx, \By, \Bz$ etc.\ for the elements of $\J$.

We have $q^{-1} p Z = P \cap q^{-1} \cdot p \cdot Z$ for every $Z \subseteq P$, and inductively
\begin{equation} \label{Eq:faintersect}
q_n^{-1} p_n \dots q_1^{-1} p_1 P
=
P \cap (q_n^{-1} \cdot p_n \cdot P) \cap \cdots \cap
(q_n^{-1} \cdot p_n \cdots q_1^{-1} \cdot p_1 \cdot P).
\end{equation}
Finally, for every $\mt \neq \Bx \in \J$ one can choose $p_1, q_1, \dots, p_n, q_n \in P$ such that $p_1^{-1} q_1 \cdots p_n^{-1} q_n = e_P$ and $\Bx = q_n^{-1} p_n \dots q_1^{-1} p_1 P$.

\subsection{Product systems and the Fock representation}

Product systems are constructed over families of C*-correspondences.
We will be using the theory of C*-correspondences freely; the reader may refer to \cite{Lan95, MT05} for the standard theory of Hilbert C*-modules.

We use the framework of concrete product systems from \cite{KP25}, which simplifies notationally some of the arguments.
A family $X = \{X_p\}_{p \in P}$ of norm-closed operator spaces in a common $\B(H)$ is a \emph{(concrete) product system} if:
\begin{enumerate}
\item $A := X_{e_P}$ is a C*-algebra;
\item $X_p \cdot X_q \subseteq X_{pq}$ for all $p, q \in P$; and
\item $X_p^* \cdot X_{pq} \subseteq X_q$ for all $p, q \in P$.
\end{enumerate}
Each $X_p$ is then a C*-correspondence over $A$ with inner product $\sca{\xi_p, \eta_p} := \xi_p^* \cdot \eta_p$ and left action $\vphi_{p}$ given by multiplication.
For $\xi_p \in X_p$, the multiplication operator
\[
M_{\xi_p}^{q, pq} \colon X_q \to X_{pq};\ \eta_q \mapsto \xi_p \cdot \eta_q,
\]
lies in $\L(X_q, X_{pq})$, with adjoint $\eta_{pq} \mapsto \xi_p^* \cdot \eta_{pq}$.
We will say that $X$ is \emph{saturated} if $[X_p \cdot X_q] = X_{pq}$ for all $p,q \in P$.
In this case we have that the product system is automatically \emph{non-degenerate}, i.e., $[A \cdot X_p] = X_p$ for all $p \in P$.

On the Fock space $\F X := \sum^\oplus_{r \in P} X_r$, viewed as a right Hilbert $A$-module, we consider the \emph{left creation operators}
\[
\la_p(\xi_p) := \sum^\oplus_{r \in P} M_{\xi_p}^{r, pr} \colon \F X \to \F X,
\]
so that for $\eta_r \in X_r$ we have
\[
\la_p(\xi_p) \eta_r = \xi_p \cdot \eta_r
\qand
\la_p(\xi_p)^* \eta_r =
\begin{cases}
\xi_p^* \cdot \eta_r & \text{if } r \in pP, \\
0 & \text{if } r \notin pP.
\end{cases}
\]
We write $\la = \{\la_p\}_{p \in P}$ for the \emph{Fock representation of $X$} and $\T_\la(X) := \ca(\la_p(X_p) \mid p \in P)$ for the \emph{Fock C*-algebra}.

A \emph{(Toeplitz) representation} $t = \{t_p\}_{p \in P}$ of $X$ consists of linear maps $t_p \colon X_p \to \B(H')$ for every $p\in P$ such that:
\begin{enumerate}
\item $t_{e_P}$ is a $*$-representation of $A$;
\item $t_p(\xi_p) t_q(\eta_q) = t_{pq}(\xi_p \cdot \eta_q)$ for all $p, q \in P$ and $\xi_p \in X_p$, $\eta_q \in X_q$; and 
\item $t_p(\xi_p)^* t_{pq}(\eta_{pq}) = t_q(\xi_p^* \cdot \eta_{pq})$ for all $p, q \in P$ and $\xi_p \in X_p$, $\eta_{pq} \in X_{pq}$.
\end{enumerate}
A representation is called \emph{injective} if $t_{e_P}$ is injective. 
It follows by the definition of the Fock representation that it is an injective representation of $X$.
We write $\T(X)$ for the \emph{universal Toeplitz C*-algebra} with respect to the representations of $X$, and we write 
\[
t_\ast \colon \T(X) \to \ca(t);\ \wh{t}_p(\xi_p) \mapsto t_p(\xi_p) \foral p \in P,
\]
for the induced $*$-representation on $\T(X) = \ca(\hspace{2pt} \wh{t} \hspace{2pt})$. 
A representation $t$ is called \emph{equivariant} if there is a $*$-homomorphism 
\[
\de \colon \ca(t) \to \ca(t) \otimes \ca(G);\ \de(t_p(\xi_p)) = t_p(\xi_p) \otimes u_p \foral p \in P \text{ and } \xi_p \in X_p,
\]
where $\{u_g\}_{g\in G}$ are the unitaries that generate the group C*-algebra $\ca(G)$.
In this case $\ca(t)$ admits a coaction by $G$ whose $g$-fibre $[\ca(t)]_g$ is the closed linear span of the spaces
\[
\left( t_{p_1}(X_{p_1})^* \right)^{\eps} t_{q_1}(X_{q_1}) \cdots
t_{p_n}(X_{p_n})^* t_{q_n}(X_{q_n})^{\eps'}
\text{ with }
p_1^{-\eps} q_1 \cdots p_n^{-1} q_n^{\eps'} = g,
\]
for $\eps, \eps' \in \{0,1\}$ and $n \in \bN$. 
We call the $e_P$-fibre $[\ca(t)]_{e_P}$ the \emph{fixed point algebra} of $\ca(t)$. 
If, in addition, the map $(\id \otimes \la_G) \circ \de$ is injective, where $\la_G \colon \ca(G) \to \ca_\la(G)$ denotes the left regular representation of the group $G$, then $\de$ will be called \emph{normal}.
The universal representation of $X$ is equivariant and the Fock representation is equivariant with a normal coaction \cite[Proposition~4.1]{DKKLL22}.
Moreover, for an equivariant representation $t$ of $X$ the collection $\{[\ca(t)]_g\}_{g\in G}$ forms a Fell bundle over $G$ in the sense of \cite{Exe17}.

The following two results, which describe how words in the creation operators act on the fibres of $\F X$, are the basic computational tools.

\begin{proposition} \label{P:mult} \cite[Proposition 2.3]{KP25}
Let $P$ be a unital subsemigroup of a discrete group $G$ and let $X$ be a product system over $P$.
Let $t$ be a representation of $X$ and let $\xi_r \in X_r$, $\xi_{p_i} \in X_{p_i}$ and $\xi_{q_i} \in X_{q_i}$ for $r, p_1, q_1, \dots, p_n, q_n \in P$ and $\eps, \eps' \in \{0,1\}$.
If $r \in q_n^{-\eps'} p_n \dots q_1^{-1} p_1^{\eps} P$, then $p_1^{-\eps} q_1 \cdots p_n^{-1} q_n^{\eps'} r \in P$ and
\[
\left( t_{p_1}(\xi_{p_1})^* \right)^{\eps} t_{q_1}(\xi_{q_1}) \cdots
t_{p_n}(\xi_{p_n})^* t_{q_n}(\xi_{q_n})^{\eps'} t_r(\xi_r)
=
t_{p_1^{-\eps} q_1 \cdots p_n^{-1} q_n^{\eps'} r}
\left( (\xi_{p_1}^*)^{\eps} \xi_{q_1} \cdots \xi_{p_n}^*
\xi_{q_n}^{\eps'} \xi_r \right).
\]
\end{proposition}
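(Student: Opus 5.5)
We argue by induction on $n$, the number of pairs $t_{p_i}(\xi_{p_i})^* t_{q_i}(\xi_{q_i})$ in the word, reading the product from right to left. Only the defining relations (ii) and (iii) of a representation are used, together with the concrete product system axioms $X_p \cdot X_q \subseteq X_{pq}$ and $X_p^* \cdot X_{pq} \subseteq X_q$, which guarantee that all the products of the $\xi$'s below lie in the right fibres. The two basic moves are these. For $s \in P$ and $\xi_q \in X_q$, relation (ii) gives $t_q(\xi_q)t_s(\eta_s) = t_{qs}(\xi_q\eta_s)$. If $s \in pP$, say $s = ps'$ with $s' = p^{-1}s \in P$, then relation (iii) gives $t_p(\xi_p)^* t_s(\eta_s) = t_{p^{-1}s}(\xi_p^*\eta_s)$, with $\xi_p^*\eta_s \in X_{p^{-1}s}$.

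First we treat the innermost factor. If $\eps'=1$, the factor $t_{q_n}(\xi_{q_n})t_r(\xi_r)$ becomes $t_{q_nr}(\xi_{q_n}\xi_r)$. If $\eps'=0$, there is nothing to do and we set $s_n := r$; otherwise $s_n := q_n r$.

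Next comes the inductive step. By \eqref{Eq:faintersect}, the hypothesis $r \in q_n^{-\eps'}p_n\cdots q_1^{-1}p_1^{\eps}P$ means exactly the following. At each stage, the current element $s$ (initially $s_n$) lies in $p_nP$, so $p_n^{-1}s_n \in P$. Then $q_{n-1}p_n^{-1}s_n \in P$ automatically, and this lies in $p_{n-1}P$, and so on. When $\eps=0$ the last condition involving $p_1$ is absent.

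We unwind this carefully. Writing $r \in q_n^{-\eps'}p_n \mathbf{y}$ with $\mathbf{y} = q_{n-1}^{-1}p_{n-1}\cdots P$, we get $q_n^{\eps'}r \in p_n\mathbf{y}$. Hence $p_n^{-1}q_n^{\eps'}r \in \mathbf{y}$. The element $r' := p_n^{-1}q_n^{\eps'}r \in P$ then satisfies the shorter hypothesis $r' \in q_{n-1}^{-1}p_{n-1}\cdots p_1^{\eps}P$, now with $\eps'=1$.

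Applying move (iii) to $t_{p_n}(\xi_{p_n})^*t_{s_n}(\cdot)$ produces $t_{r'}(\xi_{p_n}^*\xi_{q_n}^{\eps'}\xi_r)$. The induction hypothesis applies to the remaining word, which has $n-1$ pairs and ends in $t_{q_{n-1}}$, together with the element $\xi_{p_n}^*\xi_{q_n}^{\eps'}\xi_r \in X_{r'}$.

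Finally, the base case. If $n=1$ with $\eps=0$, the expression is $t_{q_1}(\xi_{q_1})^{\eps'}t_r(\xi_r)$, handled by move (ii). For the word $t_{p_1}^*$ with $\eps=1$, the hypothesis $q_1^{\eps'}r \in p_1P$ is exactly what move (iii) needs.

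The conclusion $p_1^{-\eps}q_1\cdots q_n^{\eps'}r \in P$ falls out of the induction, since every intermediate index lies in $P$.

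The main obstacle is purely bookkeeping. One must correctly translate membership in the iterated constructible ideal $q_n^{-\eps'}p_n\cdots p_1^{\eps}P$ into the chain of conditions $p_i^{-1}(\cdots) \in P$ used at each step, handling the boundary exponents $\eps,\eps'$ consistently. This should be done using the defining formula $q^{-1}pZ = P \cap q^{-1}\cdot p\cdot Z$.
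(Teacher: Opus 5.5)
Your proof is correct. The key reduction is that $q_n^{\eps'}r \in p_n\mathbf{y}$ gives $r' = p_n^{-1}q_n^{\eps'}r \in \mathbf{y}$, which is exactly the hypothesis for the shorter word with $\eps'=1$. Peeling off $t_{q_n}(\xi_{q_n})^{\eps'}$ with relation (ii) and $t_{p_n}(\xi_{p_n})^*$ with relation (iii) then closes the induction, and membership of the final index in $P$ comes for free.

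The paper does not reprove this statement; it quotes it from \cite[Proposition 2.3]{KP25}. Your induction on the number of pairs is the natural direct verification of it.
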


\begin{proposition} \label{P:fcmt} \cite[page 340]{Fow02}, \cite[Proposition 2.4]{KP25}
Let $P$ be a unital subsemigroup of a discrete group $G$ and let $X$ be a product system over $P$.
Let $p_1, q_1, \dots, p_n, q_n \in P$ and $\eps, \eps' \in \{0,1\}$, and let $\xi_{p_i} \in X_{p_i}$ and $\xi_{q_i} \in X_{q_i}$ for $i = 1, \dots, n$.
Then for every $r \in P$ and $\xi_r \in X_r$ we have
\[
\left( \la_{p_1}(\xi_{p_1})^* \right)^\eps \la_{q_1}(\xi_{q_1}) \cdots
\la_{p_n}(\xi_{p_n})^* \la_{q_n}(\xi_{q_n})^{\eps'}\xi_r =
\begin{cases}
(\xi_{p_1}^*)^{\eps}\xi_{q_1}\cdots \xi_{p_n}^*\xi_{q_n}^{\eps'}\xi_r
& \text{if } r\in \Bx, \\
0 & \text{if } r \not\in \Bx,
\end{cases}
\]
where $\Bx := q_n^{-\eps'} p_n \dots q_1^{-1} p_1^{\eps} P$.
\end{proposition}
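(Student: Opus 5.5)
We prove the formula by induction on the length of the word, reading it from the right. The basic step is to determine how a single creation operator $\la_q(\xi_q)$ and a single annihilation operator $\la_p(\xi_p)^*$ act on an elementary vector $\eta_r \in X_r \subseteq \F X$, and to track the support set along the way. Both rules are already recorded in the description of the Fock representation. First, $\la_q(\xi_q)\eta_r = \xi_q\eta_r \in X_{qr}$ always. Second, $\la_p(\xi_p)^*\eta_r = \xi_p^*\eta_r \in X_{p^{-1}r}$ if $r \in pP$, and it is $0$ otherwise. For $r\in pP$ the product $\xi_p^*\eta_r$ lies in $X_{p^{-1}r}$ by axiom (iii) of a concrete product system.

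For the induction, let $w$ denote the word $\la_{p_1}(\xi_{p_1})^{*\eps}\cdots \la_{q_n}(\xi_{q_n})^{\eps'}$. Write it as a product of single letters $L_1 L_2\cdots L_m$, with associated group elements $g_1,\dots,g_m$, each of the form $q_i$ or $p_i^{-1}$. Applying the letters right to left, we claim the following. The vector $L_j\cdots L_m \eta_r$ is nonzero-type, i.e. equals the formal product $(\text{symbols})\eta_r$ in $X_{g_j\cdots g_m r}$, precisely when $g_k\cdots g_m r\in P$ for all $k\ge j$. Otherwise the vector is $0$. At each stage, the annihilation rule says the vector survives iff the current index $s$ satisfies $s\in p P$, i.e. $p^{-1}s\in P$. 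A creation letter keeps the index in $P$ automatically. Once a $0$ appears it stays $0$, by linearity. So the full word gives $(\xi_{p_1}^*)^\eps\cdots\xi_r$ exactly when every partial product $g_k\cdots g_m r$ lies in $P$, and $0$ otherwise.

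It remains to identify this condition with $r\in\Bx$. Using the description $q^{-1}pZ = P\cap q^{-1}\cdot p\cdot Z$ and its iterate \eqref{Eq:faintersect}, adapted to the possibly missing first $p_1$ or last $q_n$ (the exponents $\eps,\eps'$), we get $\Bx = P\cap (q_n^{-\eps'}p_nP)\cap\cdots\cap(q_n^{-\eps'}p_n\cdots q_1^{-1}p_1^{\eps}P)$. Thus $r\in\Bx$ iff $r \in P$ and, for each $i$, $p_i^{-1}q_i\cdots p_n^{-1}q_n^{\eps'} r \in P$ (and the analogous expression with $p_1^{-\eps}$). These are exactly the conditions at the annihilation letters. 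The intermediate conditions at creation letters are automatic, since $q\cdot P\subseteq P$.

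The main obstacle is this bookkeeping: matching the right-to-left survival conditions with the nested intersection form of $\Bx$, including the edge cases $\eps=0$ or $\eps'=0$. We handle it by a separate short induction on $n$. The step peels off the leftmost pair $p_1^{-\eps}q_1$ and uses $q^{-1}p(\Bz)=P\cap q^{-1}p\Bz$. The computation of the value in the surviving case is just iterated associativity of multiplication in $\B(H)$, as in Proposition~\ref{P:mult}.
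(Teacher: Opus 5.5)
Your argument is correct. You track the fibre index letter by letter from the right, observe that only the annihilation letters impose conditions, and match these with the nested-intersection form \eqref{Eq:faintersect} of $\Bx$; this is the standard direct computation behind the result, which the paper does not reprove but quotes from \cite{Fow02} and \cite{KP25}. One small point: the outermost operation $q_n^{-\eps'}p_n(\cdot)$ in $\Bx$ corresponds to the rightmost pair of the word, so it is peeling off that pair (not the leftmost one) that matches the identity $q^{-1}pZ = P\cap q^{-1}\cdot p\cdot Z$ directly; in any case, once you invoke \eqref{Eq:faintersect} no separate induction is needed.
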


\subsection{The $\bo{K}$-cores and the $\bo{B}$-cores}
We will now define the $\bo{K}$-cores and the $\bo{B}$-cores for a representation.
See \cite{KP25} for more details. Let $P$ be a unital subsemigroup of a discrete group $G$ and let $X$ be a product system over $P$.
Let $t$ be a representation of $X$.
For $\Bx \in \J$, the \emph{$\bo{K}$-core on $\Bx$} of $\ca(t)$ is the closed linear span $\bo{K}_{\Bx, t_\ast}$ of the spaces
\[
\left( t_{p_1}(X_{p_1})^* \right)^{\eps} t_{q_1}(X_{q_1}) \cdots
t_{p_n}(X_{p_n})^* t_{q_n}(X_{q_n})^{\eps'}
\]
over all $n\in \bN$, $p_1, q_1, \dots, p_n, q_n \in P$ and $\eps, \eps' \in \{0,1\}$ that satisfy
\[
p_1^{-\eps} q_1 \cdots p_n^{-1} q_n^{\eps'} = e_P
\qand
q_n^{-\eps'} p_n \dots q_1^{-1} p_1^{\eps} P = \Bx.
\]
We do not assume that $\bo{K}_{\mt, t_\ast} = (0)$.
Each $\bo{K}_{\Bx, t_\ast}$ is a selfadjoint space, and the computation preceding \cite[Proposition 2.5]{KP25} with \eqref{Eq:faintersect} gives
\[
\bo{K}_{\Bx, t_\ast} \cdot \bo{K}_{\By, t_\ast}
\subseteq
\bo{K}_{\Bx \cap \By, t_\ast}
\foral \Bx, \By \in \J,
\]
so that each $\bo{K}_{\Bx, t_\ast}$ is a C*-subalgebra of $\ca(t)$.
We record the following remark for future use.

\begin{remark}\label{R:ideal}
For a finite $\cap$-closed $\F \subseteq \J$ we set 
\[
\bo{B}_{\F, t_\ast} := \sum_{\Bx \in \F} \bo{K}_{\Bx, t_\ast},
\]
which is again a C*-subalgebra of $\ca(t)$.
Indeed, let $\F \subseteq \J$ be finite and $\cap$-closed.
Since $\F$ is finite we may pick $\Bx \in \F$ to be maximal in $\F$ with respect to inclusion.
Maximality of $\Bx$ implies that $\F' := \F \setminus \{\Bx\}$ is also $\cap$-closed and that $\Bx \cap \By \in \F'$ for every $\By \in \F'$.
Using $\bo{K}_{\Bx,t_\ast} \cdot \bo{K}_{\By,t_\ast}\subseteq \bo{K}_{\Bx\cap \By,t_\ast}$ we get that $\bo{B}_{\F', t_\ast}$ is an ideal in $\bo{B}_{\F,t_\ast}$.
Hence an induction over the size of $\F$ gives that $\bo{B}_{\F, t_\ast}$ is closed as the sum of an ideal and a C*-subalgebra.

If $t$ is equivariant, then the fixed point algebra $[\ca(t)]_{e_P}$ is the inductive limit of the $\bo{B}_{\F, t_\ast}$ over the finite $\cap$-closed $\mt\in \F \subseteq \J$.
\end{remark}

We next observe that in the presence of non-degeneracy the parameters $\eps, \eps'$ may be dispensed with in the description of the $\bo{K}$-cores.

\begin{proposition} \label{P:fanoeps}
Let $P$ be a unital subsemigroup of a discrete group $G$ and let $X$ be a non-degenerate product system over $P$.
Let $t$ be a representation of $X$.
Then for every $\Bx \in \J$ the core $\bo{K}_{\Bx, t_\ast}$ is the closed linear span of the spaces
\[
t_{p_1}(X_{p_1})^* t_{q_1}(X_{q_1}) \cdots
t_{p_n}(X_{p_n})^* t_{q_n}(X_{q_n})
\]
over all $n \in \bN$ and $p_1, q_1, \dots, p_n, q_n \in P$ with
$p_1^{-1} q_1 \cdots p_n^{-1} q_n = e_P$ and
$q_n^{-1} p_n \dots q_1^{-1} p_1 P = \Bx$.
\end{proposition}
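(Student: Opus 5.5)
The plan is to show that every spanning word in the definition of $\bo{K}_{\Bx, t_\ast}$ in which $\eps = 0$ or $\eps' = 0$ lies in the closed linear span of words with $\eps = \eps' = 1$, where the new words have the same group product $e_P$ and the same constructible ideal $\Bx$. One inclusion is trivial: the words with $\eps=\eps'=1$ are among those defining $\bo{K}_{\Bx, t_\ast}$. So only the converse inclusion needs work.

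The tool is non-degeneracy, $[A \cdot X_p] = X_p$, together with the fact that $t_{e_P}$ is a $*$-representation. First suppose $\eps = 0$, so the word begins with $t_{q_1}(\xi_{q_1})$. Approximate $\xi_{q_1}$ by finite sums $\sum_j a_j \eta_j$ with $a_j \in A$ and $\eta_j \in X_{q_1}$. By the covariance relations,
\[
t_{q_1}(a_j \eta_j) = t_{e_P}(a_j) t_{q_1}(\eta_j) = t_{e_P}(a_j^*)^* t_{q_1}(\eta_j).
\]
Hence, by continuity of $t_{q_1}$ (representations are contractive), the word is a norm limit of sums of words of the same shape in which a new letter $t_{p_1'}(X_{p_1'})^*$ with $p_1' = e_P$ has been prepended.

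Symmetrically, suppose $\eps' = 0$, so the word ends with $t_{p_n}(\xi_{p_n})^*$. Write $\xi_{p_n} \approx \sum_j a_j \eta_j$. Then
\[
t_{p_n}(a_j\eta_j)^* = t_{p_n}(\eta_j)^* t_{e_P}(a_j^*),
\]
so we append a letter $t_{q_n'}(X_{q_n'})$ with $q_n' = e_P$. If both $\eps$ and $\eps'$ vanish, the two steps are performed one after the other. The degenerate short words, such as a single $t_{q}(X_q)$, are covered by the same manipulation.

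It remains to check the indexing data, which is immediate because $e_P^{-1} = e_P$ and $e_P^0 = e_P$. The group product is unchanged: $e_P^{-1} q_1 \cdots = p_1^{-0} q_1 \cdots$, and similarly at the right end. The ideal is unchanged as well, since $q_n'^{-1}p_n \cdots q_1^{-1} p_1' P$ coincides with $q_n^{-\eps'} p_n \cdots q_1^{-1} p_1^{\eps} P$ when the inserted elements equal $e_P$ exactly where the corresponding exponents were $0$. For this I would use the formula $q^{-1}pZ = P \cap q^{-1}\cdot p\cdot Z$ together with \eqref{Eq:faintersect}.

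The main point requiring care is this bookkeeping of the ideal, namely verifying that the convention for the exponents $\eps, \eps'$ in the definition of $\Bx$ matches the insertion of $e_P$. Beyond that, the approximation step only uses boundedness of $t_p$, so I expect no real obstacle.
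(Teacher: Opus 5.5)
Your proposal is correct and follows the paper's argument. You absorb a factor $t_{e_P}(a^*)^*$ on the left (resp.\ $t_{e_P}(a^*)$ on the right) to turn $\eps = 0$ into $\eps = 1$ with $p_1 = e_P$ (resp.\ $\eps' = 0$ into $\eps' = 1$ with $q_n = e_P$), and neither the group element nor the constructible ideal changes.

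The only difference is technical. The paper uses the Cohen Factorisation Theorem to write $\xi_q = a\xi_q'$ exactly, whereas you approximate by finite sums and use contractivity of $t_p$. Your version works equally well, since $\bo{K}_{\Bx, t_\ast}$ is a closed linear span.
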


\begin{proof}
Non-degeneracy gives $[A \cdot X_q] = X_q$, and hence $A \cdot X_q = X_q$ by the Cohen Factorisation Theorem.
Let $\xi_q \in X_q$ and write $\xi_q = a \xi_q'$ with $a \in A$ and $\xi_q' \in X_q$.
Then
\[
t_{e_P}(a^*)^* t_q(\xi_q') = t_q(a \xi_q') = t_q(\xi_q)
\qand
t_q(\xi_q)^* = t_q(\xi_q')^* t_{e_P}(a^*).
\]
Applying the first identity to the leftmost factor of a word with $\eps = 0$ produces a word with $\eps = 1$ and $p_1 = e_P$, and applying the second to the rightmost factor of a word with $\eps' = 0$ produces a word with $\eps' = 1$ and $q_n = e_P$.
Neither operation changes the group element $p_1^{-\eps} q_1 \cdots p_n^{-1} q_n^{\eps'}$ nor the constructible ideal $q_n^{-\eps'} p_n \dots q_1^{-1} p_1^{\eps} P$, as $e_P$ acts trivially in both expressions.
\end{proof}

We will use the following immediate consequence of Proposition~\ref{P:fcmt}.

\begin{proposition} \label{P:fcmt2} \cite[Proposition 2.6]{KP25}
Let $P$ be a unital subsemigroup of a discrete group $G$, let $X$ be a product system over $P$ and let $\wh{t}$ be the universal representation of $X$ with $\T(X) = \ca(\wh{t})$.
Then for every $\Bx \in \J$ and every $r \notin \Bx$ we have
\[
\la_\ast(b_{\Bx}) \xi_r = 0
\foral b_\Bx \in \bo{K}_{\Bx, \wh{t}_\ast}, \xi_r \in X_r.
\]
\end{proposition}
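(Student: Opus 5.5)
The statement reduces to checking it on spanning elements, since both sides are linear and continuous in $b_\Bx$. By definition, $\bo{K}_{\Bx, \wh{t}_\ast}$ is the closed linear span of words
\[
w = \left( \wh{t}_{p_1}(\xi_{p_1})^* \right)^{\eps} \wh{t}_{q_1}(\xi_{q_1}) \cdots \wh{t}_{p_n}(\xi_{p_n})^* \wh{t}_{q_n}(\xi_{q_n})^{\eps'}
\]
with $p_1^{-\eps} q_1 \cdots p_n^{-1} q_n^{\eps'} = e_P$ and $q_n^{-\eps'} p_n \dots q_1^{-1} p_1^{\eps} P = \Bx$.

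Since $\la_\ast \colon \T(X) \to \T_\la(X)$ is a $*$-homomorphism with $\la_\ast(\wh{t}_p(\xi_p)) = \la_p(\xi_p)$, it sends $w$ to the same word written in the creation operators $\la_{p_i}(\xi_{p_i})^*$ and $\la_{q_i}(\xi_{q_i})$. We apply this word to $\xi_r \in X_r$ with $r \notin \Bx$. Proposition~\ref{P:fcmt} says the word acts as multiplication on $r \in \Bx$ and kills $\xi_r$ otherwise, where the relevant ideal is exactly $q_n^{-\eps'} p_n \dots q_1^{-1} p_1^{\eps} P = \Bx$. Hence $\la_\ast(w)\xi_r = 0$.

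By linearity the identity then holds on the linear span of such words. The map $b \mapsto \la_\ast(b)\xi_r$ is bounded from $\T(X)$ into $\F X$, since $\|\la_\ast(b)\xi_r\| \leq \|b\|\|\xi_r\|$. So the identity passes to the closure, which is all of $\bo{K}_{\Bx, \wh{t}_\ast}$.

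There is no real obstacle here. The only point needing care is that the constructible ideal attached to each spanning word coincides with $\Bx$ itself, rather than merely containing it. This holds by the defining condition of $\bo{K}_{\Bx, \wh{t}_\ast}$, so the vanishing case of Proposition~\ref{P:fcmt} applies. Words with $\Bx = \mt$ are covered by the same argument, since then every $r \notin \Bx$.
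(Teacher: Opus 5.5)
Your proof is correct and takes the same route as the paper. The paper presents this statement as an immediate consequence of Proposition~\ref{P:fcmt}, which is exactly your argument: verify the vanishing on the spanning words, whose attached constructible ideal is precisely $\Bx$, and then extend by linearity and norm continuity of $b \mapsto \la_\ast(b)\xi_r$.
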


\subsection{Fock covariance}
Let $P$ be a unital subsemigroup of a discrete group $G$ and let $X$ be a product system over $P$.
Following \cite{KP25}, consider the ideal 
\[
\J_{\cov}^{\fock} := \sca{\ker \la_\ast \cap [\T(X)]_{e_P}} \lhd \T(X),
\]
and write $\T_{\cov}^{\fock}(X)$ for the corresponding quotient, which inherits the coaction by $G$ from $\T(X)$.
The \emph{Fock covariant bundle} of $X$ is the Fell bundle $\F\C X := \{ [\T_{\cov}^{\fock}(X)]_g \}_{g \in G}$, and a representation of $X$ that promotes to a representation of $\F\C X$ is called a \emph{Fock covariant representation of $X$}.
Trivially $\T_{\cov}^{\fock}(X)$ and $\T_\la(X)$ arise from Fock covariant representations, and moreover we have the canonical $*$-isomorphisms
\[
\T_{\cov}^{\fock}(X) \simeq \ca_{\max}(\F\C X)
\qand
\T_\la(X) \simeq \ca_\la(\F\C X),
\]
where $\ca_{\max}(\F\C X)$ and $\ca_\la(\F\C X)$ denote the full cross-sectional C*-algebra and the reduced cross-sectional C*-algebra of $\F\C X$, see \cite[Definitions 16.25 and 17.6]{Exe17} and \cite{DKKLL22, KP25} for a detailed proof of these facts.

In the special case that $P$ is a unital subsemigroup of an amenable discrete group $G$, \cite[Theorem 20.7 and Proposition 20.2]{Exe17} yields a canonical $*$-isomorphism $\T_{\cov}^{\fock}(X) \simeq \T_\la(X)$.
Therefore the universal property passes to $\T_\la(X)$ in the sense that if $t$ is a Fock covariant representation of $X$, then there exists a canonical $*$-epimorphism $\T_\la(X) \to \ca(t)$.

Regarding the $\bo{K}$-cores, by Proposition~\ref{P:fcmt} we have
\begin{equation} \label{Eq:faKempty}
\bo{K}_{\mt, \la_\ast} = (0).
\end{equation}
More generally, in \cite[Theorem 3.2]{KP25} it is shown that if $t$ is a Fock covariant representation, then $\bo{K}_{\mt,t_\ast}=(0)$ as well.
Injective Fock covariant representations admit an intrinsic characterisation in terms of the $\bo{K}$-cores, see \cite[Theorem 3.2]{KP25}.
We shall need the following consequence of Fock covariance, recorded here for future use.

\begin{proposition}\label{P:Kcoresiso}
\cite[Remark 3.3]{KP25}
Let $P$ be a unital subsemigroup of a discrete group $G$ and let $X$ be a product system over $P$.
Suppose that $t$ is an injective Fock covariant representation of $X$.
Then the canonical quotient map $\T_{\cov}^{\fock}(X) \to \ca(t)$ restricts to a $*$-isomorphism on each $\bo{K}$-core.
\end{proposition}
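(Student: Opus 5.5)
Throughout, let $\Phi \colon \T_{\cov}^{\fock}(X) \to \ca(t)$ be the canonical $*$-epimorphism, which exists because $t$ is Fock covariant, and let $q \colon \T(X) \to \T_{\cov}^{\fock}(X)$ be the quotient map. Then $t_\ast = \Phi \circ q$. Since $q$ and $\Phi$ send generators to generators, $q$ maps $\bo{K}_{\Bx,\wh{t}_\ast}$ onto $\bo{K}_{\Bx,q}$ (the corresponding core of $\T_{\cov}^{\fock}(X)$), and $\Phi$ maps $\bo{K}_{\Bx,q}$ onto $\bo{K}_{\Bx,t_\ast}$. In each case the image of a C*-algebra under a $*$-homomorphism is closed, and it contains the dense span of words. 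So only injectivity of $\Phi$ on $\bo{K}_{\Bx,q}$ needs proof.

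\emph{Step 1: reduce to the Fock representation.} I will check that $\la_\ast$ factors through $q$ and that the induced map $\T_{\cov}^{\fock}(X) \to \T_\la(X)$ is injective on the fixed point algebra $[\T_{\cov}^{\fock}(X)]_{e_P}$.
\begin{itemize}
\item The factorisation holds because $\J_{\cov}^{\fock} \subseteq \ker \la_\ast$.
\item For injectivity, the ideal $\J_{\cov}^{\fock}$ is generated by $\ker\la_\ast \cap [\T(X)]_{e_P}$. Hence its $e$-fibre, computed with the faithful conditional expectation of the coaction on $\T(X)$, is exactly $\ker\la_\ast \cap [\T(X)]_{e_P}$. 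Equivalently, $\T_{\cov}^{\fock}(X) \simeq \ca_{\max}(\F\C X)$ and $\T_\la(X) \simeq \ca_\la(\F\C X)$ share the same unit fibre.
\end{itemize}
Consequently, it suffices to show: if $b \in \bo{K}_{\Bx,\wh{t}_\ast}$ and $t_\ast(b) = 0$, then $\la_\ast(b) = 0$.

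\emph{Step 2: test on fibres of the Fock space.} Let $b \in \bo{K}_{\Bx,\wh{t}_\ast}$, let $r \in P$ and let $\xi_r \in X_r$.
\begin{itemize}
\item If $r \notin \Bx$, then $\la_\ast(b)\xi_r = 0$ by Proposition~\ref{P:fcmt2}.
\item If $r \in \Bx$, first take $b$ to be a single word $\wh{t}_{p_1}(\xi_{p_1})^* \wh{t}_{q_1}(\xi_{q_1}) \cdots$ with total group element $e_P$ and ideal $\Bx$. By Proposition~\ref{P:mult} (applied to $\wh{t}$) together with Proposition~\ref{P:fcmt}, we get
\[
b \, \wh{t}_r(\xi_r) = \wh{t}_r\big(\la_\ast(b)\xi_r\big).
\]
Both sides are linear and continuous in $b$, so this identity extends to all $b \in \bo{K}_{\Bx,\wh{t}_\ast}$. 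Here the cancellation to $e_P$ guarantees that the output lies in $X_r$.
\end{itemize}

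\emph{Step 3: use injectivity of $t$.} Applying $t_\ast$ to the identity in Step 2 gives
\[
t_\ast(b)\, t_r(\xi_r) = t_r(\eta_r), \qquad \eta_r := \la_\ast(b)\xi_r \in X_r .
\]
If $t_\ast(b) = 0$, then $t_r(\eta_r) = 0$, and therefore
\[
t_{e_P}(\eta_r^*\eta_r) = t_r(\eta_r)^* t_r(\eta_r) = 0 .
\]
Since $t_{e_P}$ is injective, this forces $\eta_r = 0$. Combining the two cases of Step 2, $\la_\ast(b)$ vanishes on every fibre of $\F X$, hence $\la_\ast(b) = 0$. By Step 1, $q(b) = 0$. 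This shows that $\Phi$ is injective on $\bo{K}_{\Bx,q}$, and hence is a $*$-isomorphism there.

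\emph{Main obstacle.} I expect the delicate point to be Step 1: justifying that $\T_{\cov}^{\fock}(X) \to \T_\la(X)$ is faithful on the unit fibre. I would argue this via the Fell bundle identification $\T_{\cov}^{\fock}(X) \simeq \ca_{\max}(\F\C X)$ and $\T_\la(X) \simeq \ca_\la(\F\C X)$. The full and reduced cross-sectional algebras contain the same unit fibre $\F\C X_{e}$, which by construction embeds in $\T_\la(X)$.
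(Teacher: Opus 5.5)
Your proof is correct and is the natural argument behind \cite[Remark 3.3]{KP25}, which the paper cites without proof. Injectivity of $t_{e_P}$ gives $\ker t_\ast \cap \bo{K}_{\Bx,\wh{t}_\ast} \subseteq \ker \la_\ast$ via the identity $b\,\wh{t}_r(\xi_r) = \wh{t}_r(\la_\ast(b)\xi_r)$ for $r \in \Bx$, and the inclusion $\ker\la_\ast \cap [\T(X)]_{e_P} \subseteq \J_{\cov}^{\fock}$ then forces $q(b) = 0$.

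That last inclusion holds by the very definition of $\J_{\cov}^{\fock}$, so the ``main obstacle'' of Step~1 (faithfulness of $\T_{\cov}^{\fock}(X) \to \T_\la(X)$ on the unit fibre) is not actually needed. As written, Step~1 also contains a slip: the conditional expectation on $\T(X)$ is not faithful in general. For instance, $G = P = \bF_2$ with $X_p = \bC$ gives $\T(X) = \ca(\bF_2)$ with its canonical trace. Your claim about the unit fibre of $\J_{\cov}^{\fock}$ nevertheless holds without faithfulness.
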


\begin{remark}\label{R:scr}
Let $P$ be a unital subsemigroup of a discrete group $G$ and let $X$ be a product system over $P$. 
We write $A\times_X P$ for the universal boundary quotient of $\T(X)$ constructed by Sehnem in \cite[Theorem~3.10]{Seh19}. 
Every strong covariant representation of $X$ in the sense of \cite[Definition~3.2]{Seh19} is Fock covariant, since the strong covariance relations are graded and are induced from $\T_\la(X)$, see \cite[Section~5]{DKKLL22}. 
In \cite[Lemma~3.4]{Seh19} it is shown that $A\times_X P$ is an equivariant quotient of $\T(X)$, i.e., it inherits a coaction by $G$.
As observed in \cite[Section~5]{DKKLL22} this gives rise to a Fell bundle $\S\C X$ over $G$, and $A\times_X P$ is the full cross-sectional C*-algebra of $\S\C X$. 
We write $A\times_{X,\la} P$ for the corresponding reduced cross-sectional C*-algebra, which inherits a normal coaction by $G$. 
Since $A\times_{X,\la} P$ is a quotient of $A\times_X P$, every $*$-representation of either algebra arises from a Fock covariant representation of $X$. Consequently, all results we obtain for Fock covariant representations also apply to the boundary quotients $A\times_X P$ and $A\times_{X,\la} P$.
\end{remark}

We close this subsection with the following example that indicates how semigroup C*-algebras are captured by C*-algebras associated to product systems.
For a more detailed description see \cite[Subsection 3.3]{KP25}.

\begin{example}\label{E:semig}
Let $P$ be a unital subsemigroup of a discrete group $G$.
A map $w\colon P \to \B(H)$ is called an \emph{isometric representation of $P$} if it is a semigroup homomorphism and each $w_p$ is an isometry.
Following the notation of \cite{LS22}, for a word of even length we write $\al = (p_1, q_1, \dots, p_n, q_n)$ where $p_1, q_1, \dots, p_n, q_n\in P$.
A word $\al = (p_1, q_1, \dots, p_n, q_n)$ is called \emph{neutral} if $p_1^{-1} q_1 \cdots p_n^{-1} q_n = e_P$.
We also write
\[
K(\al) := q_n^{-1} p_n \dots q_1^{-1} p_1 P
\]
for the induced constructible ideal in $\J$, and
\[
\dot{w}_\al := w_{p_1}^* w_{q_1} \cdots w_{p_n}^* w_{q_n} \in \B(H).
\]

Let $\T(P):=\ca(\{v_p \mid p\in P\})$ denote the universal C*-algebra with respect to isometric representations of $P$.
Moreover, we write $\ca_\la(P)$ for the reduced C*-algebra of the semigroup which is generated by the left regular representation $V\colon P \to \B(\ell^2(P))$.
In \cite{LS22} the exact algebraic relations that describe Fock covariance are identified.
Following the notation of \cite{LS22} we denote by $\T_u(P)$ the universal C*-algebra with respect to (isometric) representations $w \colon P \to \B(H)$ that satisfy the relations:
\begin{itemize}
\item [\rm{(T1)}] $w_{e_P}=1$;
\item [\rm{(T2)}] $\dot{w}_{\al}=0$ if $K(\al)=\mt$ for a neutral word $\al$;
\item [\rm{(T3)}] $\dot{w}_{\al}=\dot{w}_{\be}$ if $K(\al)=K(\be)$ for neutral words $\al$ and $\be$; and
\item [\rm{(T4)}] $\prod_{\be\in F}(\dot{w}_{\al}-\dot{w}_{\be})=0$ whenever $\al$ is a neutral word, $F$ is a finite set of neutral words and $K(\al)=\bigcup_{\be\in F}K(\be)$.
\end{itemize}
These relations are of course satisfied by the left regular representation $V$ of $P$.

We define the concrete product system $X_P$ in $\T(P)$ by
\[
X_{P, p} := \bC v_p \foral p \in P.
\]
Then the association
\[
t = \{t_p\}_{p \in P} \mapsto w_t = \{t_p(v_p)\}_{p \in P}
\]
defines a bijection between unital (Fock covariant) representations of $X_P$ and isometric representations of $P$ (resp.\ that satisfy conditions (T1)--(T4)).
This bijection matches the Fock representation $\la$ of $X_P$ to the left regular representation $V$ of $P$.
See \cite[Proposition 3.9]{KP25} for a detailed proof.
In particular, we obtain the canonical $*$-isomorphisms
\[
\T(X_P)\simeq \T(P), \quad \T_\la(X_P)\simeq \ca_\la(P) \qand\T_{\cov}^{\fock}(X_P)\simeq \T_u(P).
\]
Hence, if $G$ is assumed amenable, then we get $\ca_\la(P) \simeq \T_u(P)$.

Moreover, let $w$ be an isometric representation of $P$ that satisfies conditions (T1)--(T4) and let $t$ be its corresponding Fock covariant representation of $X_P$.
Let $\Bx \in \J$ be a constructible ideal.
If $\Bx=\mt$, then condition (T2) implies that $\bo{K}_{\mt,t_\ast}=(0)$.
If $\Bx\neq \mt$, then pick a neutral word $\al=(p_1,q_1,\dots,p_n,q_n)$ such that $\Bx = K(\al)$.
In this case, condition (T3) implies that 
\[
\bo{K}_{\Bx,t_\ast} = \bC \dot{w}_\al.
\]
\end{example}

\subsection{Fowler's product systems} \label{Ss:Fow}

Let $P$ be a unital subsemigroup of a discrete group $G$.
Following \cite[Definition 2.1]{Fow02}, a \emph{product system in the sense of Fowler} over $P$ with coefficients in a C*-algebra $A$ is a family $\{X_p\}_{p \in P}$ of C*-correspondences over $A$, with left actions implemented by the $*$-homomorphisms 
\[
\{\vphi_p\colon A \to \L(X_p)\}_{p\in P},
\]
together with a family of maps 
\[
\{u_{p,q} \colon X_p \otimes_A X_q \to X_{pq}\}_{p, q \in P},
\]
such that:
\begin{enumerate} 
\item $X_{e_P} = A$;
\item $u_{p,q}$ is a unitary for all $p,q \in P\setminus \{e_P\}$;
\item $u_{e_P, q}$ and $u_{p, e_P}$ are given by the left and the right action of $A$; and 
\item the family $\{u_{p,q}\}_{p,q \in P}$ is associative, i.e.,
\[
u_{p, qr} \circ (\id_{X_p} \otimes u_{q,r})
=
u_{pq, r} \circ (u_{p,q} \otimes \id_{X_r})
\foral p, q, r \in P.
\]
\end{enumerate}
We note that as the right action on a C*-correspondence is always non-degenerate, the maps $u_{p, e_P}$ are unitaries as well.
We call $X$ as above \emph{non-degenerate} if the $*$-homomorphisms $\vphi_p$ are non-degenerate for every $p\in P$, and hence $u_{e_P,p}$ is a unitary for every $p\in P$.
When $P$ contains non-trivial invertible elements then the $u_{p,q}$ being unitaries for all $p,q \in P \setminus \{e_P\}$ implies that so are the $u_{e_P,q}$ for every $q \in P$, see \cite[Remark 1.3]{KL19b}.
Therefore it has become customary in the literature to consider non-degenerate product systems.

If $X$ is a saturated concrete product system, then $X$ defines a non-degenerate product system in the sense of Fowler by means of the unitaries
\[
u_{p,q} \colon X_p \otimes_A X_q \to X_{pq};\
\xi_p \otimes \xi_q \mapsto \xi_p \cdot \xi_q.
\]
Conversely, every non-degenerate product system in the sense of Fowler is unitarily equivalent to a saturated concrete one, namely to $\la(X)$, see \cite[Subsection 2.5]{KP25}.
In this work, we focus on saturated product systems.
Depending on what is most convenient, we view them either concretely or abstractly in the sense of Fowler.
We use both viewpoints since concrete models make proofs easier to follow with less notation, while the abstract approach makes it easier to track left actions by adjointable operators.

A saturated product system gives rise to the $*$-homomorphisms
\[
i_p^{pq} \colon \L(X_p) \to \L(X_{pq});\
i_p^{pq}(S) := u_{p,q} \left( S \otimes \id_{X_q} \right) u_{p,q}^*.
\]
Likewise for $p, q, r \in P$ define the linear map
\[
i^r_{p,q} \colon \L(X_q, X_p) \to \L(X_{qr}, X_{pr});\
S \mapsto u_{p,r} \left( S \otimes \id_{X_r} \right) u_{q,r}^*.
\]
Note that $i_p^{pq}= i^q_{p,p}$ for every $p,q \in P$.

The following lemmas are well known, and we record them here for future use.

\begin{lemma} \label{L:faj}
Let $P$ be a unital subsemigroup of a discrete group $G$ and let $X$ be a saturated product system over $P$.
For all $p, q, s, r, r_1, r_2 \in P$ the following hold:
\begin{enumerate}
\item $i^r_{p,q}$ is linear and contractive, and
$i^r_{p,q}(S)^* = i^r_{q,p}(S^*)$ for $S \in \L(X_q, X_p)$.
\item $i^r_{p,q}(S) \, i^r_{q,s}(T) = i^r_{p,s}(S T)$ for $S \in \L(X_q, X_p)$ and $T \in \L(X_s, X_q)$.
\item $i^{r_1 r_2}_{p,q}(S) = i^{r_2}_{p r_1, q r_1} \circ i^{r_1}_{p,q}(S)$ for $S \in \L(X_q, X_p)$.
\item $i^r_{p,q}(T S) = i_p^{pr}(T) \, i^r_{p,q}(S)$ and $i^r_{p,q}(S T') = i^r_{p,q}(S) \, i_q^{qr}(T')$, for $T \in \L(X_p)$, $T' \in \L(X_q)$ and $S \in \L(X_q, X_p)$.
\end{enumerate}
\end{lemma}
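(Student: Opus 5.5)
The plan is to reduce every item to elementary tensors and extend by linearity and continuity. First, $i^r_{p,q}(S)$ is a composition of the unitary $u_{p,r}$, the operator $S \otimes \id_{X_r}$, and the unitary $u_{q,r}^*$. The map $S \mapsto S \otimes \id_{X_r}$ from $\L(X_q, X_p)$ into $\L(X_q \otimes_A X_r, X_p \otimes_A X_r)$ is linear and contractive; this is the standard interior tensor product fact, since $\|S\otimes \id\| \leq \|S\|$. Saturation makes each $u_{p,r}$ a unitary, as recorded in Subsection~\ref{Ss:Fow}, including the cases $p=e_P$ or $r=e_P$ via non-degeneracy. So (i) follows from $(S\otimes\id)^* = S^*\otimes \id$ together with $(u_{p,r} R u_{q,r}^*)^* = u_{q,r} R^* u_{p,r}^*$.

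For (ii), the middle unitaries cancel because $u_{q,r}^* u_{q,r} = \id$. What remains is the identity $(S\otimes\id)(T\otimes\id) = ST\otimes \id$. Item (iv) is then the special case of (ii) obtained by taking one of the two operators to lie in $\L(X_p)$ or $\L(X_q)$, since $i^{pr}_p = i^r_{p,p}$.

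The main work is (iii), and I would check it on elementary tensors. Fix $\eta \in X_q$, $\zeta_1 \in X_{r_1}$ and $\zeta_2 \in X_{r_2}$. Consider the element $u_{q,r_1r_2}(\eta \otimes u_{r_1,r_2}(\zeta_1\otimes\zeta_2))$ of $X_{qr_1r_2}$. By associativity this equals $u_{qr_1,r_2}(u_{q,r_1}(\eta\otimes\zeta_1)\otimes \zeta_2)$. Elements of this form span a dense subspace of $X_{qr_1r_2}$, because the $u$'s are unitaries.

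Apply the left-hand side of (iii) to the first expression. This gives
\[
u_{p,r_1r_2}(S\eta \otimes u_{r_1,r_2}(\zeta_1\otimes\zeta_2)) = u_{pr_1,r_2}(u_{p,r_1}(S\eta\otimes\zeta_1)\otimes\zeta_2),
\]
again by associativity. Apply the right-hand side to the second expression. First $i^{r_1}_{p,q}(S)$ sends $u_{q,r_1}(\eta\otimes\zeta_1)$ to $u_{p,r_1}(S\eta\otimes\zeta_1)$. Then $i^{r_2}_{pr_1,qr_1}$ tensors with $\zeta_2$ and applies $u_{pr_1,r_2}$, which produces the same vector.

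Both sides are bounded and linear, so they agree everywhere. The only point needing care is that the associativity identity in Fowler's sense is stated for all $p,q,r$ and uses unitaries in the saturated setting, so no degeneracy issues arise.
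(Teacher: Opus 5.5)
Your proof is correct. Items (i), (ii) and (iv) follow from the unitarity of the $u_{p,r}$, which saturation guarantees even when $p$ or $r$ equals $e_P$, together with the functoriality and contractivity of $S \mapsto S \otimes \id_{X_r}$; item (iii) follows by checking both sides on the dense span of the vectors $u_{q,r_1r_2}(\eta \otimes u_{r_1,r_2}(\zeta_1 \otimes \zeta_2))$ using Fowler associativity, as you do. The paper does not prove this lemma at all but records it as well known, and your argument is the standard verification it alludes to.
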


\begin{lemma} \label{L:famult}
Let $P$ be a unital subsemigroup of a discrete group $G$ and let $X$ be a saturated product system over $P$.
For $\xi_p \in X_p$, $p \in P$, let $M^{e_P, p}_{\xi_p} \colon A \to X_p$ be the operator given by $M^{e_P, p}_{\xi_p} a := \xi_p a$ for $a \in A$.
Then $M^{e_P, p}_{\xi_p} \in \K(A, X_p)$.
\end{lemma}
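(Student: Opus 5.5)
The claim is that $M^{e_P,p}_{\xi_p}$ is a compact operator in $\K(A,X_p)$, i.e.\ it lies in the closed linear span of rank-one operators $\theta_{\eta,a}\colon b \mapsto \eta\sca{a,b} = \eta a^* b$ with $\eta \in X_p$, $a \in A$. I would first use saturation to get non-degeneracy of the right action, factorise $\xi_p$ accordingly, and then identify $M_{\xi_p}^{e_P,p}$ with a single rank-one operator.

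First, the right action of $A$ on $X_p$ is non-degenerate, since $[X_p \cdot A] = [X_p \cdot X_{e_P}] = X_{p e_P} = X_p$ by saturation. This also holds for any C*-correspondence via an approximate unit of $A$. By the Cohen Factorisation Theorem (as used in Proposition~\ref{P:fanoeps}) we can therefore write $\xi_p = \eta_p c$ with $\eta_p \in X_p$ and $c \in A$.

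Then for every $b \in A$ we compute
\[
M^{e_P,p}_{\xi_p} b = \eta_p c b = \eta_p \sca{c^*, b}_A = \theta_{\eta_p, c^*}(b),
\]
where the inner product on $A$ as a module over itself is $\sca{a,b} = a^*b$. Hence $M^{e_P,p}_{\xi_p} = \theta_{\eta_p,c^*}$ is rank-one, and in particular lies in $\K(A,X_p)$. Adjointability holds automatically, with adjoint $\zeta_p \mapsto \xi_p^*\zeta_p \in A$, which is consistent with the earlier remark on multiplication operators.

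If one prefers to avoid Cohen factorisation, take an approximate unit $(e_\la)$ of $A$. Then $\xi_p e_\la \to \xi_p$ in norm, and
\[
\|M_{\xi_p} - \theta_{\xi_p,e_\la}\| = \sup_{\|b\|\le1}\|(\xi_p - \xi_p e_\la)b\| \le \|\xi_p - \xi_p e_\la\| \to 0,
\]
so $M_{\xi_p}$ is a norm limit of rank-one operators.

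There is no real obstacle here. The only point needing care is the convention for rank-one operators, $\theta_{\eta,a}(b) = \eta\sca{a,b}$, so that the adjoint $c^*$ appears correctly.
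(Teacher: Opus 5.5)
Your proof is correct: Cohen factorisation $\xi_p = \eta_p c$ gives $M^{e_P,p}_{\xi_p} = \theta_{\eta_p, c^*}$ exactly, and your approximate-unit estimate is an equally valid alternative. As you note, saturation is not actually needed, since $[X_p \cdot A] = X_p$ holds for any C*-correspondence.

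The paper gives no proof of this lemma and records it as well known. Your argument is the standard one, and it uses the same factorisation trick the paper employs in the proof of Lemma~\ref{L:faunits}(ii).
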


\begin{lemma} \label{L:facreate}
Let $P$ be a unital subsemigroup of a discrete group $G$ and let $X$ be a saturated product system over $P$.
Then for $\xi_p\in X_p$, $p \in P$, we have
\[
\la_p(\xi_p)|_{X_r} = i^r_{p, e_P} \left( M^{e_P, p}_{\xi_p} \right)
\qand
\la_p(\xi_p)^*|_{X_{pr}}
= i^r_{e_P, p} \left( \left( M^{e_P, p}_{\xi_p} \right)^* \right).
\]
\end{lemma}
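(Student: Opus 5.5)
The plan is to verify the first identity directly on a dense set of vectors, and then obtain the second identity by taking adjoints, using Lemma~\ref{L:faj}(i).

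\textbf{Setup.} Since $X$ is saturated, it is non-degenerate. Hence $u_{e_P,r} \colon A \otimes_A X_r \to X_r$ is a unitary, given on elementary tensors by $a \otimes \eta_r \mapsto a\eta_r$. Likewise $u_{p,r} \colon X_p \otimes_A X_r \to X_{pr}$ is the unitary $\xi_p \otimes \eta_r \mapsto \xi_p\eta_r$. By Lemma~\ref{L:famult}, $M^{e_P,p}_{\xi_p} \in \K(A,X_p) \subseteq \L(A,X_p)$, so the operator
\[
i^r_{p,e_P}\big(M^{e_P,p}_{\xi_p}\big) = u_{p,r}\big(M^{e_P,p}_{\xi_p} \otimes \id_{X_r}\big)u_{e_P,r}^*
\]
is a well-defined element of $\L(X_r, X_{pr})$.

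\textbf{First identity.} By the Cohen Factorisation Theorem (as in Proposition~\ref{P:fanoeps}), every $\eta_r \in X_r$ can be written as $\eta_r = a\eta_r'$ with $a \in A$ and $\eta_r' \in X_r$. Then $u_{e_P,r}^*(\eta_r) = a \otimes \eta_r'$. Applying $M^{e_P,p}_{\xi_p}\otimes \id_{X_r}$ gives $\xi_p a \otimes \eta_r'$, and applying $u_{p,r}$ gives $\xi_p a\eta_r' = \xi_p\eta_r$. This is exactly $\la_p(\xi_p)\eta_r$. Since both sides are bounded and linear, they agree on all of $X_r$. The only point needing care is that $u_{e_P,r}^*$ really acts on the factorised vector as claimed. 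This holds because $u_{e_P,r}$ is a unitary whose action on elementary tensors $a \otimes \eta_r'$ is known.

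\textbf{Second identity.} First, $\la_p(\xi_p)^*$ maps $X_{pr}$ into $X_r$ via $\zeta_{pr} \mapsto \xi_p^*\zeta_{pr}$. The compression of $\la_p(\xi_p)^*$ to $X_{pr} \to X_r$ is the adjoint of the compression $\la_p(\xi_p)|_{X_r} \colon X_r \to X_{pr}$. Indeed,
\[
\sca{\xi_p\eta_r,\zeta_{pr}} = \eta_r^*\xi_p^*\zeta_{pr} = \sca{\eta_r,\xi_p^*\zeta_{pr}}.
\]
Now take adjoints of the first identity and apply Lemma~\ref{L:faj}(i):
\[
i^r_{p,e_P}\big(M^{e_P,p}_{\xi_p}\big)^* = i^r_{e_P,p}\big((M^{e_P,p}_{\xi_p})^*\big),
\]
which gives the claim.

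I do not anticipate any real obstacle: the argument is a bookkeeping check on elementary tensors, together with non-degeneracy obtained from saturation.
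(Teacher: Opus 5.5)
Your proof is correct. The paper records this lemma as well known and gives no proof of its own. Your argument is the natural verification, in the same concrete style the paper uses for Lemma~\ref{L:fadiag} and Lemma~\ref{L:facptfibre}.

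For the first identity, saturation gives $[A\cdot X_r]=X_r$, so Cohen factorisation writes every $\eta_r$ as $a\eta_r'$. Then $u_{e_P,r}^*\eta_r=a\otimes\eta_r'$ exactly, so the identity holds on every vector, and your closing appeal to boundedness is not needed.

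The second identity follows from $\sca{\xi_p\eta_r,\zeta_{pr}}=\sca{\eta_r,\xi_p^*\zeta_{pr}}$ together with item (i) of Lemma~\ref{L:faj}. That item itself amounts to $(S\otimes\id_{X_r})^*=S^*\otimes\id_{X_r}$.
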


Recall that amplifying a compact operator does not produce a compact operator in general; see Remark~\ref{R:faneed}.
It does, however, when the amplification is along an invertible element of the semigroup.

\begin{lemma} \label{L:faunits}
Let $P$ be a unital subsemigroup of a discrete group $G$ and let $X$ be a saturated product system over $P$.
If $z \in P^*$, then the following hold:
\begin{enumerate}
\item $\vphi_z(A) \subseteq \K(X_z)$;
\item $i^z_{p,q}\left( \K(X_q, X_p) \right) \subseteq \K(X_{qz}, X_{pz})$ for
all $p, q \in P$.
\end{enumerate}
\end{lemma}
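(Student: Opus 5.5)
Both parts can be proved by hand from the axioms of a concrete saturated product system. The key input is that, for $z \in P^*$, saturation gives
\[
[X_z \cdot X_{z^{-1}}] = X_{e_P} = A.
\]

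For (i), I first show that $X_{z^{-1}}^* \subseteq X_z$. The product system axiom $X_p^* \cdot X_{pq} \subseteq X_q$ with $p = z^{-1}$ and $q = z$ gives $X_{z^{-1}}^* \cdot A \subseteq X_z$. For $\eta \in X_{z^{-1}}$ the right action of $A$ on $X_{z^{-1}}$ is non-degenerate, so $\eta = \lim_\la \eta e_\la$ for an approximate unit $(e_\la)_\la$ of $A$. Taking adjoints, $\eta^* = \lim_\la e_\la \eta^*$. I then need the reversed inclusion $A \cdot X_{z^{-1}}^* \subseteq X_z$, which follows from the same axiom read on the other side: $(X_{z^{-1}}^*)^* = X_{z^{-1}}$ and $X_{z^{-1}} \cdot A \subseteq X_{z^{-1}}$. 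Together with closedness of $X_z$ this gives $\eta^* \in X_z$.

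Now take $a \in A$ and approximate it in norm by finite sums $\sum_i \xi_i \eta_i$ with $\xi_i \in X_z$ and $\eta_i \in X_{z^{-1}}$. Write $\eta_i = \zeta_i^*$ with $\zeta_i \in X_z$, using the previous step. For $\om \in X_z$ we have
\[
\vphi_z\Big(\sum_i \xi_i\zeta_i^*\Big)\om = \sum_i \xi_i \sca{\zeta_i, \om} = \sum_i \theta_{\xi_i,\zeta_i}\om.
\]
Since $\vphi_z$ is contractive, $\vphi_z(a)$ is a norm limit of finite-rank operators, and hence $\vphi_z(a) \in \K(X_z)$.

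For (ii), $i^z_{p,q}$ is linear and contractive by Lemma~\ref{L:faj}, so it suffices to treat rank-one operators $\theta_{x,y}$ with $x \in X_p$ and $y \in X_q$. By Cohen factorisation of the right action, write $x = x'a$ with $a \in A$, so that $\theta_{x,y} = \theta_{x',y}\vphi$-twisted by $a$. For $\xi \in X_q$ and $\om \in X_z$ we get
\[
(\theta_{x,y} \otimes \id_{X_z})(\xi \otimes \om) = x' \otimes \vphi_z(a)\vphi_z(\sca{y,\xi})\om.
\]
By (i), $\vphi_z(a)$ is approximated by $\sum_i \theta_{\xi_i,\zeta_i}$. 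Substituting this, the operator $\theta_{x,y} \otimes \id_{X_z}$ is approximated by $\sum_i \theta_{x' \otimes \xi_i,\, y \otimes \zeta_i}$, because
\[
\sca{y \otimes \zeta_i, \xi \otimes \om} = \sca{\zeta_i, \vphi_z(\sca{y,\xi})\om}.
\]
So $\theta_{x,y} \otimes \id_{X_z} \in \K(X_q \otimes_A X_z, X_p \otimes_A X_z)$. Conjugating by the unitaries $u_{p,z}$ and $u_{q,z}$ preserves compactness, and therefore $i^z_{p,q}(\theta_{x,y}) \in \K(X_{qz}, X_{pz})$.

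The only delicate point is the inclusion $X_{z^{-1}}^* \subseteq X_z$, and the non-degeneracy argument above handles it.
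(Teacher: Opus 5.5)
\paragraph{Gap in the key inclusion.} The gap is in the step you flag as delicate, the inclusion $X_{z^{-1}}^*\subseteq X_z$, and the argument you give for it is circular. From right non-degeneracy you get $\eta^*=\lim_\lambda e_\lambda\eta^*$, so you need $A\cdot X_{z^{-1}}^*\subseteq X_z$. Taking adjoints of $X_{z^{-1}}\cdot A\subseteq X_{z^{-1}}$ only gives $A\cdot X_{z^{-1}}^*\subseteq X_{z^{-1}}^*$. Landing in $X_z$ would already require $X_{z^{-1}}^*\subseteq X_z$.

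This is not just a presentational slip. The inclusion genuinely needs saturation, and your argument never uses it at this point, since right non-degeneracy holds in every Hilbert module. For example, over $G=P=\bZ/2\bZ$ take $X_0=\bC e_{11}$ and $X_1=\bC e_{21}$ inside $M_2(\bC)$. These satisfy all three axioms of a concrete product system, yet $X_1^*=\bC e_{12}\not\subseteq X_1$. Here saturation fails because $X_0\cdot X_1=(0)$.

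\paragraph{Repair and comparison with the paper.} Use non-degeneracy on the other side, which saturation supplies: $[A\cdot X_{z^{-1}}]=[X_{e_P}\cdot X_{z^{-1}}]=X_{z^{-1}}$. Hence $\eta=\lim_\lambda e_\lambda\eta$, and so $\eta^*=\lim_\lambda \eta^*e_\lambda$ with $\eta^*e_\lambda\in X_{z^{-1}}^*\cdot X_{e_P}\subseteq X_z$ by axiom (iii) for $p=z^{-1}$, $q=z$.

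With this repair your proof of (i) is correct. Your (ii) is essentially the paper's argument: Cohen factorisation followed by compactness of $\vphi_z(a)$. You use explicit rank-one approximants, with error bounded by $\|x'\|\,\|y\|\,\|\vphi_z(a)-\sum_i\theta_{\xi_i,\zeta_i}\|$, where the paper instead uses that compacts form an ideal among adjointable operators.

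For (i), the paper avoids the inclusion $X_{z^{-1}}^*\subseteq X_z$ altogether. For $a=\xi_z\eta_{z^{-1}}$ it writes $\vphi_z(a)=M^{e_P,z}_{\xi_z}\circ M^{z,e_P}_{\eta_{z^{-1}}}$. The first factor lies in $\K(A,X_z)$ by Lemma~\ref{L:famult}. The second factor is merely adjointable, and its adjointability needs only $X_{z^{-1}}^*\cdot A\subseteq X_z$, which is the part of your argument that was correct.
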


\begin{proof}
(i) If $z=e_P$, then the result follows trivially.
Suppose that $z\neq e_P$.
We have 
\[
X_z \otimes_A X_{z^{-1}} \simeq X_{e_P} = A,
\]
so it suffices to show this for the elements $a := u_{z, z^{-1}}(\xi_z \otimes \eta_{z^{-1}})$ with $\xi_z \in X_z$ and $\eta_{z^{-1}} \in X_{z^{-1}}$.
For $\ze_z \in X_z$ we have $\eta_{z^{-1}} \otimes \ze_z \in X_{z^{-1}} \otimes_A X_z \simeq A$, and therefore
\begin{align*}
\vphi_{z}(a) \ze_z
& =
u_{e_P,z} \circ (u_{z,z^{-1}} \otimes \id_{X_{z}})((\xi_z \otimes \eta_{z^{-1}}) \otimes \ze_z) \\
& =
u_{z, e_P} \circ (\id_{X_z} \otimes u_{z^{-1},z})(\xi_z \otimes (\eta_{z^{-1}} \otimes \ze_z)) \\
& =
M^{e_P, z}_{\xi_z} \left( M^{z, e_P}_{\eta_{z^{-1}}} \ze_z \right).
\end{align*}
Thus $\vphi_{z}(a) = M^{e_P,z}_{\xi_z} \circ M^{z, e_P}_{\eta_{z^{-1}}}$, which lies in $\K(X_z)$ since $M^{e_P,z}_{\xi_z} \in \K(A, X_z)$ by Lemma~\ref{L:famult} and $M^{z,e_P}_{\eta_{z^{-1}}} \in \L(X_z, A)$.
The claim follows by linearity and continuity.

\smallskip
\noindent
(ii) It suffices to consider $S = \theta^{X_q, X_p}_{\xi_p, \eta_q}$ with $\xi_p \in X_p$ and $\eta_q \in X_q$.
Since $[X_q \cdot A] = X_q$, the Cohen Factorisation Theorem allows us to write $\eta_q = \eta'_q a$ with $\eta'_q \in X_q$ and $a \in A$.
For $\ze_q \in X_q$ and $\ka_z \in X_z$ we then have
\begin{align*}
i^z_{p,q}(S) \left( u_{q,z}(\ze_q \otimes \ka_z) \right)
&=
u_{p,z} \left( \xi_p \sca{\eta_q, \ze_q} \otimes \ka_z \right)\\
&=
u_{p,z} \left( \xi_p \otimes
\vphi_{z}(a^*) \vphi_{z}(\sca{\eta'_q, \ze_q}) \ka_z \right)\\
&=
i^z_{p, e_P} \left( M^{e_P,p}_{\xi_p} \right) \;
\vphi_{z}(a^*) \;
i^z_{e_P, q} \left( \left( M^{e_P,q}_{\eta'_q} \right)^* \right)\left( u_{q,z}(\ze_q \otimes \ka_z) \right),
\end{align*}
and hence
\[
i^z_{p,q}(S)
=
i^z_{p, e_P} \left( M^{e_P,p}_{\xi_p} \right) \;
\vphi_{z}(a^*) \;
i^z_{e_P, q} \left( \left( M^{e_P,q}_{\eta'_q} \right)^* \right).
\]
The outer two factors are adjointable and the middle one is compact by item (i), and thus $i^z_{p,q}(S) \in \K(X_{qz}, X_{pz})$. By linearity and continuity the proof is complete.
\end{proof}

For $r \in P$ we write $Q_r \colon \F X \to X_r$ for the canonical projection on the $r$-summand, and we consider the evaluation map 
\[
\ev_r \colon \L(\F X) \to \L(X_r);\
T \mapsto Q_r T|_{X_r}.
\]
We will need the following lemma.

\begin{lemma} \label{L:fadiag}
Let $P$ be a unital subsemigroup of a discrete group $G$ and let $X$ be a saturated product system over $P$.
If $\Bx \in \J$ and $b \in \bo{K}_{\Bx, \la_\ast}$, then the following hold:
\begin{enumerate}
\item $b X_r \subseteq X_r$ for $r \in \Bx$ and $b X_r = (0)$ for $r \notin \Bx$; in particular $b$ is determined by the family $(\ev_r(b))_{r \in \Bx}$, and each $\ev_r$ restricts to a $*$-homomorphism on $\bo{K}_{\Bx, \la_\ast}$.
\item $\ev_{rs}(b) = i_r^{rs}(\ev_r(b))$ for all $r \in \Bx$ and $s \in P$.
\end{enumerate}
\end{lemma}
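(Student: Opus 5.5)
It suffices to prove both items for a spanning element of $\bo{K}_{\Bx,\la_\ast}$ and then pass to linear combinations and norm limits. All the properties involved are preserved under these operations: $X_r$ is closed, $\ev_r$ is linear and contractive, and $i_r^{rs}$ is a contractive $*$-homomorphism.

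By saturation we have non-degeneracy, so Proposition~\ref{P:fanoeps} applies. We may therefore take
\[
b = \la_{p_1}(\xi_{p_1})^* \la_{q_1}(\xi_{q_1}) \cdots \la_{p_n}(\xi_{p_n})^* \la_{q_n}(\xi_{q_n}),
\]
with $p_1^{-1}q_1\cdots p_n^{-1}q_n = e_P$ and $q_n^{-1}p_n\cdots q_1^{-1}p_1P = \Bx$.

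\emph{Item (i).} Proposition~\ref{P:fcmt} gives $b\eta_r = \xi_{p_1}^*\xi_{q_1}\cdots\xi_{p_n}^*\xi_{q_n}\eta_r$ when $r\in\Bx$ and $b\eta_r = 0$ otherwise. By Proposition~\ref{P:mult} (applied to $\la$), in the first case the product lies in $X_{p_1^{-1}q_1\cdots q_n r} = X_r$ by neutrality. So $b$ is block diagonal with respect to $\F X = \sum^\oplus X_r$, and it vanishes off $\Bx$.

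Block diagonality passes to the closed span. For block-diagonal operators, $\ev_r$ is multiplicative: $\ev_r(bc) = Q_r b c|_{X_r} = \ev_r(b)\ev_r(c)$, since $c X_r\subseteq X_r$. It is also $*$-preserving, because $\bo{K}_{\Bx,\la_\ast}$ is selfadjoint and consists of diagonal operators, so $Q_r b^*|_{X_r} = (Q_r b|_{X_r})^*$. Finally, $b$ is recovered as the direct sum of its diagonal blocks, and these are zero outside $\Bx$.

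\emph{Item (ii).} Fix $r\in\Bx$ and $s\in P$. Note that $rs\in\Bx$ because $\Bx$ is a right ideal. Since $u_{r,s}$ is unitary by saturation, $X_{rs}$ is densely spanned by products $\eta_r\zeta_s$. For a spanning $b$ we compute
\[
b(\eta_r\zeta_s) = (\xi_{p_1}^*\cdots\xi_{q_n}\eta_r)\zeta_s = (\ev_r(b)\eta_r)\zeta_s = u_{r,s}\bigl(\ev_r(b)\otimes\id_{X_s}\bigr)(\eta_r\otimes\zeta_s).
\]
This uses associativity of the operator products in $\B(H)$ and Proposition~\ref{P:fcmt} applied to both $r\in\Bx$ and $rs\in\Bx$. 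The right-hand side equals $i_r^{rs}(\ev_r(b))(\eta_r\zeta_s)$. Both sides are bounded in $\eta_r\zeta_s$, so $\ev_{rs}(b) = i_r^{rs}(\ev_r(b))$ for spanning elements.

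Both sides of this identity are linear and norm-continuous in $b$. For the right-hand side this holds because $\ev_r$ and $i_r^{rs}$ are contractive. Hence the identity extends to all of $\bo{K}_{\Bx,\la_\ast}$.

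The only delicate point is this extension to the closed span. It is routine, since $\ev_r$ is a contractive linear map on all of $\L(\F X)$ and is multiplicative on the closed set of block-diagonal operators.
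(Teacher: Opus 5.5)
Your proof is correct and follows essentially the paper's argument: item (i) comes from Proposition~\ref{P:fcmt} applied to spanning words together with a density and continuity passage (the paper cites Proposition~\ref{P:fcmt2} for the vanishing off $\Bx$), and item (ii) comes from the identity $b(\eta_r\zeta_s) = (b\eta_r)\zeta_s$ together with $[X_r \cdot X_s] = X_{rs}$. Your reduction via Proposition~\ref{P:fanoeps} is harmless but not needed, since Proposition~\ref{P:fcmt} already covers words with arbitrary $\eps, \eps'$.
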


\begin{proof}
For simplicity we show this in the concrete product system setting.
Item (i) follows by an application of Proposition~\ref{P:fcmt} and Proposition~\ref{P:fcmt2}.
Since $X_r$ reduces every element of $\bo{K}_{\Bx, \la_\ast}$, we get that $\ev_r$ is a $*$-homomorphism on the C*-algebra $\bo{K}_{\Bx, \la_\ast}$.

For item (ii), fix $s\in P$ and pick some $\xi_r \in X_r$ and $\eta_s \in X_s$.
We have
\[
\ev_{rs}(b)(\xi_r \eta_s) = b \left( \xi_r \eta_s \right)
=
( b \xi_r ) \eta_s
=
( \ev_r(b) \xi_r ) \eta_s=i_r^{rs}(\ev_r(b))(\xi_r\eta_s).
\]
Using $[X_r \cdot X_s] = X_{rs}$ completes the proof.
\end{proof}

We record how the compact operators sit inside the fibres of the Fock space.
If $t$ is a representation of a product system $X$, we write $t^{(p)}$ for the induced $*$-homomorphism on each $\K(X_p)$ such that 
\[
t^{(p)}(\theta^{X_p}_{\xi_p, \eta_p}) = t_p(\xi_p) t_p(\eta_p)^* \foral \xi_p, \eta_p \in X_p.
\]
By \cite[Proposition B.8]{Kat04} we have that if $t_{e_P}$ is a nuclear map, then so is $t^{(p)}$.

\begin{lemma} \label{L:facptfibre}
Let $P$ be a unital subsemigroup of a discrete group $G$ and let $X$ be a saturated product system over $P$.
Let $p \in P$ and $k \in \K(X_p)$.
Then $\la^{(p)}(k) X_r = (0)$ for $r \notin pP$, and $\la^{(p)}(k) X_r \subseteq X_r$ for $r \in pP$. Moreover,
\[
\ev_{pq} \left( \la^{(p)}(k) \right) = i_p^{pq}(k)
\foral q \in P.
\]
\end{lemma}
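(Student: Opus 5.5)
The plan is to reduce to rank-one operators and compute directly with the creation operators on the fibres of $\F X$; the general case then follows by linearity and continuity. Since $\la^{(p)}$ is a $*$-homomorphism, it is contractive. The maps $\ev_{pq}$ and $i_p^{pq}$ are also contractive. Moreover, the conditions ``$T X_r = (0)$'' and ``$T X_r \subseteq X_r$'' are closed linear conditions on $T \in \L(\F X)$, since each $X_r$ is a closed submodule. Hence it suffices to verify all three claims for $k = \theta^{X_p}_{\xi_p, \eta_p}$ with $\xi_p, \eta_p \in X_p$. For such $k$ we have $\la^{(p)}(k) = \la_p(\xi_p)\la_p(\eta_p)^*$.

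First I fix $r \in P$ and $\ze_r \in X_r$ and use the explicit formula for the adjoint of a creation operator. If $r \notin pP$, then $\la_p(\eta_p)^* \ze_r = 0$, so $\la^{(p)}(k)\ze_r = 0$. If $r \in pP$, then $r = pq$ for a unique $q \in P$, because $P$ embeds in the group $G$. In this case $\la_p(\eta_p)^* \ze_r = \eta_p^* \ze_r \in X_q$, using the product system axiom $X_p^* \cdot X_{pq} \subseteq X_q$. Applying $\la_p(\xi_p)$ then gives $\xi_p \eta_p^* \ze_r \in X_{pq} = X_r$. This proves the first two claims. It also shows that $\ev_{pq}(\la^{(p)}(k))$ is just the restriction $\ze_{pq} \mapsto \xi_p \eta_p^* \ze_{pq}$.

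For the formula $\ev_{pq}(\la^{(p)}(k)) = i_p^{pq}(k)$, I use saturation: $[X_p \cdot X_q] = X_{pq}$. So it suffices to test both sides on products $\ze_p \ka_q$ with $\ze_p \in X_p$ and $\ka_q \in X_q$. On the left-hand side,
\[
\xi_p \eta_p^*(\ze_p \ka_q) = (\xi_p \sca{\eta_p, \ze_p}) \ka_q .
\]
On the right-hand side, $u_{p,q}$ is multiplication, so
\[
i_p^{pq}(k)(\ze_p \ka_q) = u_{p,q}(k\ze_p \otimes \ka_q) = (\xi_p\sca{\eta_p,\ze_p})\ka_q .
\]
The two expressions agree. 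Both sides are bounded operators on $X_{pq}$ that agree on a dense subspace, so they are equal. Linearity and continuity of both sides in $k$ then give the formula for all $k \in \K(X_p)$.

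I do not expect a serious obstacle. The only points needing care are the following. The decomposition $r = pq$ is unique by cancellativity in $G$. The density argument requires saturation, through $[X_p \cdot X_q] = X_{pq}$. Finally, the passage from rank-one operators to all of $\K(X_p)$ uses that each of the three statements is preserved under norm limits.
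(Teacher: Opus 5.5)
Your proposal is correct and follows essentially the same route as the paper: reduce by linearity and continuity to $k = \theta^{X_p}_{\xi,\eta}$ with $\la^{(p)}(k) = \la_p(\xi)\la_p(\eta)^*$, read off the first two claims from the action of the creation operators and their adjoints on the fibres, and then verify the formula for $\ev_{pq}$ on products $\ze_p \ze'_q$, which span a dense subspace of $X_{pq}$ by saturation. The only difference is cosmetic: you compute with the explicit adjoint formula where the paper cites Proposition~\ref{P:fcmt}, and you spell out the continuity bookkeeping that the paper leaves implicit.
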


\begin{proof}
For simplicity we show this in the concrete product system setting.
By linearity and continuity we may assume that $k = \theta^{X_p}_{\xi, \eta}$, so that $\la^{(p)}(k) = \la_p(\xi) \la_p(\eta)^*$.
The first two claims follow from Proposition~\ref{P:fcmt}.
Using $[X_p \cdot X_q] =X_{pq}$, it suffices to evaluate on elements $\ze_p \ze_q'$ with $\ze_p \in X_p$ and $\ze_q' \in X_q$, where we get
\[
\ev_{pq}(\la^{(p)}(k)) (\ze_p \ze_q')
= Q_{pq} \la_p(\xi) \la_p(\eta)^* \ze_p \ze_q'
= \xi \eta^* \ze_p \ze_q'
= (\theta^{X_p}_{\xi, \eta} \ze_p ) \ze_q'
= i_p^{pq}(\theta^{X_p}_{\xi, \eta})(\ze_p \ze_q'),
\]
as required.
\end{proof}

\subsection{Exactness of the $\bo{K}$-cores} \label{Ss:exkcores}

Exactness for injective equivariant Fock covariant representations of a product system $X$ has been studied by many authors, see for example \cite{DS01, Kak20, KKLL23, Kat04, LS22}.
The following recovers an intermediate step (that is, item (i) of Question~\ref{Qn:main2}) in these results without additional hypotheses on $X$ or $P$, or the representation.

\begin{theorem}\label{T:Kcores}
Let $P$ be a unital subsemigroup of a discrete group $G$ and let $X$ be a product system over $P$.
Let $t$ be a representation of $X$.
Then the following are equivalent:
\begin{enumerate}
\item $\bo{K}_{\Bx,t_\ast}$ is exact for every $\Bx \in \J$.
\item $\bo{B}_{\F,t_\ast}$ is exact for every finite and $\cap$-closed $\F \subseteq \J$.
\item The inductive limit of the $\bo{B}_{\F, t_\ast}$ over the finite $\cap$-closed $\mt\in\F\subseteq\J$ is exact.
\end{enumerate}
\end{theorem}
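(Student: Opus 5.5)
We will prove the cycle (i) $\Rightarrow$ (ii) $\Rightarrow$ (iii) $\Rightarrow$ (i), using only the gluing result Theorem~\ref{T:Exsum}, the ideal structure from Remark~\ref{R:ideal}, and the permanence properties of exactness.

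For (i) $\Rightarrow$ (ii) we induct on the cardinality of the finite $\cap$-closed family $\F \subseteq \J$. If $\F = \mt$ the algebra is $(0)$. If $\F = \{\Bx\}$ then $\bo{B}_{\F, t_\ast} = \bo{K}_{\Bx, t_\ast}$, which is exact by hypothesis. For the inductive step we follow Remark~\ref{R:ideal}:
\begin{itemize}
\item choose $\Bx \in \F$ maximal with respect to inclusion and set $\F' := \F \setminus \{\Bx\}$;
\item then $\F'$ is again $\cap$-closed, so $\bo{B}_{\F', t_\ast}$ is exact by the inductive hypothesis;
\item moreover $\bo{B}_{\F', t_\ast}$ is an ideal of $\bo{B}_{\F, t_\ast} = \bo{B}_{\F', t_\ast} + \bo{K}_{\Bx, t_\ast}$.
\end{itemize}
Applying Theorem~\ref{T:Exsum} with $\C = \ca(t)$, $\J = \bo{B}_{\F', t_\ast}$ and $\A = \bo{K}_{\Bx, t_\ast}$ then gives exactness of $\bo{B}_{\F, t_\ast}$. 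The hypotheses of Theorem~\ref{T:Exsum} (that the sum is a C*-subalgebra and the first summand is an ideal in it) are exactly what Remark~\ref{R:ideal} provides.

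For (ii) $\Rightarrow$ (iii), observe that the finite $\cap$-closed families containing $\mt$ form an upward directed set. Indeed, the $\cap$-closure of a finite union of such families is still finite: it consists of finite intersections of finitely many ideals, and $\J$ is closed under finite intersections. Hence the algebras $\bo{B}_{\F, t_\ast}$ form an increasing net of C*-subalgebras of $\ca(t)$, and its closed union is their inductive limit. Since exactness passes to inductive limits, (iii) follows.

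For (iii) $\Rightarrow$ (i), fix $\Bx \in \J$. The family $\{\mt, \Bx\}$ is $\cap$-closed, since $\Bx \cap \mt = \mt$. Hence $\bo{K}_{\Bx, t_\ast} \subseteq \bo{B}_{\{\mt, \Bx\}, t_\ast}$ sits inside the inductive limit, and exactness passes to C*-subalgebras.

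The only step with real content is the inductive step of (i) $\Rightarrow$ (ii). There we need the ideal property and the closedness of the sum, and the local-split machinery packaged in Theorem~\ref{T:Exsum}. This is the main obstacle, but it is already settled by the earlier results, so the remaining steps are bookkeeping.
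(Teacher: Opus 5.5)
Your proposal is correct and matches the paper's proof step for step. For (i) $\Rightarrow$ (ii) you induct by removing a maximal $\Bx\in\F$, use Remark~\ref{R:ideal} to see $\bo{B}_{\F',t_\ast}$ as an ideal in $\bo{B}_{\F',t_\ast}+\bo{K}_{\Bx,t_\ast}$, and apply Theorem~\ref{T:Exsum}; the other two implications are the permanence of exactness under inductive limits and under C*-subalgebras, and your remarks on directedness of the index set and on the family $\{\mt,\Bx\}$ just make explicit what the paper leaves implicit.
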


\begin{proof}
\noindent
[(i) $\Rightarrow$ (ii)].
We proceed by induction on $|\F|$.
If $|\F|=1$, then $\bo{B}_{\F,t_\ast}=\bo{K}_{\Bx,t_\ast}$ for some $\Bx \in \F$ which is exact by our hypothesis.
Now suppose that $|\F|=k+1$ and that $\bo{B}_{\F',t_\ast}$ is exact for all finite and $\cap$-closed $\F'\subseteq \J$ with $|\F'|\leq k$.
Write $ \F' := \F \setminus \{\Bx\}$ with $\Bx$ maximal in $\F$.
By our hypothesis we have that $\bo{K}_{\Bx,t_\ast}$ is exact and by our inductive hypothesis we have that $\bo{B}_{\F',t_\ast}$ is also exact.
Moreover, by Remark~\ref{R:ideal} we have that $\bo{B}_{\F',t_\ast}$ is an ideal in $\bo{B}_{\F,t_\ast} = \bo{B}_{\F',t_\ast}+\bo{K}_{\Bx,t_\ast}$.
Theorem~\ref{T:Exsum} then implies that $\bo{B}_{\F,t_\ast}$ is also exact, as required.

\smallskip
\noindent
[(ii) $\Rightarrow$ (iii)].
This is immediate as exactness passes to inductive limits.

\smallskip
\noindent
[(iii) $\Rightarrow$ (i)].
This is immediate as exactness passes to C*-subalgebras.
\end{proof}

We obtain the following corollary.

\begin{corollary}\label{C:Excores-3}
Let $P$ be a unital subsemigroup of an exact discrete group $G$ and let $X$ be a product system over $P$.
Then the following are equivalent:
\begin{enumerate}
\item $\T_\la(X)$ is exact.
\item $\bo{K}_{\Bx,\la_\ast}$ is exact for every $\Bx \in \J$.
\item $\ca(t)$ is exact for a quotient $\ca(t)$ of $\T_\la(X)$ that is injective on $A$.
\item $\bo{K}_{\Bx,t_\ast}$ is exact for every $\Bx \in \J$, for a quotient $\ca(t)$ of $\T_\la(X)$ that is injective on $A$.
\item $\ca(t)$ is exact for an injective equivariant Fock covariant representation $t$ of $X$ that admits a normal coaction.
\item $\bo{K}_{\Bx,t_\ast}$ is exact for every $\Bx \in \J$, for an injective equivariant Fock covariant representation $t$ of $X$ that admits a normal coaction.
\end{enumerate}
If any of the items above holds, then items (iii)--(vi) hold for any representation $t$ with the properties of the corresponding item.
\end{corollary}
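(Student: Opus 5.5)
The plan is to move between the reduced C*-algebra and its fixed point algebra using the exactness of $G$, and to move between different representations using Proposition~\ref{P:Kcoresiso}. The tool for the first move is the standard fact for Fell bundles over an exact group: if $\B$ is a Fell bundle over $G$ with reduced cross-sectional algebra $\ca_\la(\B)$, then $\ca_\la(\B)$ is exact if and only if its unit fibre $\B_{e}$ is exact (Exel \cite{Exe17}, via exactness of reduced crossed products by coactions of exact groups). We apply this to $\T_\la(X) \simeq \ca_\la(\F\C X)$ and to any $\ca(t)$ with a normal coaction, since then $\ca(t) \simeq \ca_\la(\{[\ca(t)]_g\}_{g \in G})$. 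Combined with Theorem~\ref{T:Kcores} and Remark~\ref{R:ideal}, this gives, for such $t$, that $\ca(t)$ is exact if and only if $[\ca(t)]_{e_P}$ is exact, if and only if every $\bo{K}_{\Bx,t_\ast}$ is exact. In particular (i)$\Leftrightarrow$(ii) and (v)$\Leftrightarrow$(vi).

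Next we link the cores of different representations. The canonical map $\T_{\cov}^{\fock}(X)\to\T_\la(X)$ is Fock covariant, so if $t$ is any injective Fock covariant representation then Proposition~\ref{P:Kcoresiso}, applied to both $\la$ and $t$, gives $\bo{K}_{\Bx,t_\ast}\simeq \bo{K}_{\Bx,\T^{\fock}_{\cov}}\simeq\bo{K}_{\Bx,\la_\ast}$ for every $\Bx\in\J$. This yields (ii)$\Leftrightarrow$(vi) for any such $t$.

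For a quotient $\ca(t)$ of $\T_\la(X)$ that is injective on $A$, the representation $t$ factors through $\T_\la(X)$ and hence through $\T_{\cov}^{\fock}(X)$. Therefore $t$ is an injective Fock covariant representation, and again (ii)$\Leftrightarrow$(iv). The remaining implications are as follows:
\begin{enumerate}
\item (i)$\Rightarrow$(iii) holds because exactness passes to quotients.
\item (iii)$\Rightarrow$(iv) holds because exactness passes to C*-subalgebras.
\item (v)$\Rightarrow$(vi) holds for the same reason.
\end{enumerate}
The equivalences for $\la$ close the cycle. Since the equivalences (ii)$\Leftrightarrow$(iv) and (ii)$\Leftrightarrow$(vi) hold for every $t$ with the stated properties, the final sentence of the statement follows.

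The main obstacle is the Fell bundle step: showing that exactness of the unit fibre implies exactness of the reduced cross-sectional algebra when $G$ is exact. I would try to cite it directly from \cite{Exe17}. Failing that, I would embed $\ca_\la(\B)$ into $(\ca_\la(\B)\otimes\K(\ell^2(G)))$, realised as a crossed product $\ca_\la(\B)\rtimes_{\widehat{\de}} \hat G$-type object via the normal coaction, in a way whose exactness reduces to that of $\B_e$ and of $G$. The converse direction is trivial, since $[\ca(t)]_{e_P}$ is a C*-subalgebra of $\ca(t)$.
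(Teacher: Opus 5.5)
Your proof is correct and follows essentially the paper's route. Theorem~\ref{T:Kcores} together with Exel's exactness result over exact groups \cite[Proposition 25.12]{Exe17} gives (i)$\Leftrightarrow$(ii), Proposition~\ref{P:Kcoresiso} identifies the $\bo{K}$-cores of any injective Fock covariant $t$ with those of $\la$, and permanence properties handle the rest. The only minor difference is for item (v): you obtain (vi)$\Rightarrow$(v) by applying Exel's result directly to $\ca(t) \simeq \ca_\la(\{[\ca(t)]_g\}_{g \in G})$, which you get from normality. The paper instead obtains (i)$\Rightarrow$(v) by noting that such a $\ca(t)$ is a quotient of $\T_\la(X)$. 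Both work.
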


\begin{proof}
\noindent
[(i) $\Rightarrow$ (ii)].
Each $\bo{K}_{\Bx, \la_\ast}$ is a C*-subalgebra of $\T_\la(X)$ and exactness passes to C*-subalgebras.

\smallskip
\noindent
[(ii) $\Rightarrow$ (i)].
The Fock representation is equivariant, and thus the fixed point algebra $[\T_\la(X)]_{e_P}$ is the inductive limit of the $\bo{B}_{\F, \la_\ast}$ over the finite $\cap$-closed $\mt \in \F \subseteq \J$.
Hence Theorem~\ref{T:Kcores} yields that $[\T_\la(X)]_{e_P}$ is exact.
Exactness of the group and \cite[Proposition 25.12]{Exe17} (which applies due to normality of the coaction on $\T_\la(X)$) then give that $\T_\la(X)$ is exact.

\smallskip
\noindent
[(ii) $\Leftrightarrow$ (iv)].
This follows from Proposition~\ref{P:Kcoresiso} due to injectivity of $t$.

\smallskip
\noindent
[(ii) $\Leftrightarrow$ (vi)].
Same as [(ii) $\Leftrightarrow$ (iv)].

\smallskip
\noindent
[(i) $\Rightarrow$ (iii)].
This is immediate as exactness is preserved under quotients.

\smallskip
\noindent
[(iii) $\Rightarrow$ (iv)].
Every $\bo{K}_{\Bx, t_\ast}$ is a C*-subalgebra of $\ca(t)$ and exactness passes to C*-subalgebras.

\smallskip
\noindent
[(i) $\Rightarrow$ (v)].
If $t$ is an equivariant Fock covariant representation of $X$ that admits a normal coaction, then $\ca(t)$ is a quotient of $\T_\la(X)$, see for example \cite[comments prior to Proposition~2.1]{KP25}.
Hence this follows as exactness is preserved under quotients.

\smallskip
\noindent
[(v) $\Rightarrow$ (vi)].
Same as [(iii) $\Rightarrow$ (iv)].
\end{proof}

When the semigroup is embedded in an amenable discrete group we have the following two corollaries.

\begin{corollary}\label{C:Excores-1}
Let $P$ be a unital subsemigroup of an amenable discrete group $G$ and let $X$ be a product system over $P$.
Let $t$ be an equivariant representation of $X$.
Then the following are equivalent:
\begin{enumerate}
\item $\bo{K}_{\Bx,t_\ast}$ is exact for every $\Bx \in \J$.
\item $\bo{B}_{\F,t_\ast}$ is exact for every finite and $\cap$-closed $\F \subseteq \J$.
\item $[\ca(t)]_{e_P}$ is exact.
\item $\ca(t)$ is exact.
\end{enumerate}
\end{corollary}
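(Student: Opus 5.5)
The first three items are already handled by Theorem~\ref{T:Kcores}, so the plan is to feed that theorem into the Fell bundle machinery available for amenable groups. The equivalence [(i) $\Leftrightarrow$ (ii)] is exactly [(i) $\Leftrightarrow$ (ii)] of Theorem~\ref{T:Kcores}; no amenability is needed there. For [(ii) $\Leftrightarrow$ (iii)] we will use that $t$ is equivariant. By Remark~\ref{R:ideal}, $[\ca(t)]_{e_P}$ is then the inductive limit of the $\bo{B}_{\F,t_\ast}$ over the finite $\cap$-closed $\mt \in \F \subseteq \J$. Hence item (ii) gives (iii), because exactness passes to inductive limits. Conversely, (iii) gives (i), because each $\bo{K}_{\Bx,t_\ast}$ is a C*-subalgebra of $[\ca(t)]_{e_P}$ and exactness passes to C*-subalgebras; this closes the loop with (ii).

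For [(iv) $\Rightarrow$ (iii)] we simply note that $[\ca(t)]_{e_P}$ is a C*-subalgebra of $\ca(t)$.

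The only substantive step is [(iii) $\Rightarrow$ (iv)], and I would argue it through the Fell bundle $\{[\ca(t)]_g\}_{g\in G}$. Since $t$ is equivariant, this collection is a Fell bundle over $G$ whose unit fibre is $[\ca(t)]_{e_P}$. Moreover $\ca(t)$ is a C*-completion of its cross-sectional $*$-algebra: it is generated by the fibres, and the coaction gives the conditional expectation onto the fibres. It therefore sits between the full and the reduced cross-sectional C*-algebras of the bundle. Because $G$ is amenable, the bundle has the approximation property, so by \cite[Theorem 20.7 and Proposition 20.2]{Exe17} the full and reduced cross-sectional C*-algebras coincide. Consequently $\ca(t)$ is $*$-isomorphic to the reduced cross-sectional C*-algebra. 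In particular the coaction on $\ca(t)$ is normal, which is needed to apply \cite[Proposition 25.12]{Exe17}.

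Next we apply \cite[Proposition 25.12]{Exe17}, as in the proof of Corollary~\ref{C:Excores-3}. That result says that if the unit fibre is exact and the group is exact, then the reduced cross-sectional C*-algebra is exact. Amenable groups are exact, so exactness of $[\ca(t)]_{e_P}$ yields exactness of $\ca(t)$.

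The main point to check carefully is this identification of $\ca(t)$ with the reduced cross-sectional C*-algebra. Concretely, one has to verify that the equivariant representation gives a genuine, nondegenerate topological grading of $\ca(t)$, so that Exel's results apply. I expect this to follow directly from the existence of the coaction $\de$ and from amenability. Alternatively, if one prefers not to identify $\ca(t)$ with the reduced algebra, one can use that $\ca(t)$ is a quotient of the full cross-sectional C*-algebra; the full algebra is exact by the same argument, and exactness passes to quotients.
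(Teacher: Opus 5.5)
Your proposal is correct and follows the paper's proof essentially step for step. Items (i)--(iii) come from Theorem~\ref{T:Kcores} together with Remark~\ref{R:ideal}, (iv)$\Rightarrow$(iii) holds because exactness passes to C*-subalgebras, and (iii)$\Rightarrow$(iv) identifies $\ca(t)$ with the reduced cross-sectional algebra of $\{[\ca(t)]_g\}_{g \in G}$ via \cite[Theorem 20.7 and Proposition 20.2]{Exe17} and then applies \cite[Proposition 25.12]{Exe17}, using that amenable groups are exact.
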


\begin{proof}
\noindent
[(i) $\Leftrightarrow$ (ii) $\Leftrightarrow$ (iii)].
This follows by Theorem~\ref{T:Kcores} as equivariance implies that the inductive limit over the $\bo{B}$-cores is the fixed point algebra.

\smallskip
\noindent
[(iv) $\Rightarrow$ (iii)].
This follows as exactness passes to C*-subalgebras.

\smallskip
\noindent
[(iii) $\Rightarrow$ (iv)].
By \cite[Theorem 20.7 and Proposition 20.2]{Exe17} we have 
\[
\ca(t) \simeq \ca_\la(\{[\ca(t)]_g\}_g),
\]
and as amenable groups are exact applying \cite[Proposition 25.12]{Exe17} yields the required result.
\end{proof}

If $G$ is assumed to be amenable, then $\T_\la(X)$ is universal with respect to the Fock covariant representations of $X$ and by Proposition~\ref{P:Kcoresiso} and Corollary~\ref{C:Excores-3} we obtain the following corollary.

\begin{corollary}\label{C:Excores-2}
Let $P$ be a unital subsemigroup of an amenable discrete group $G$ and let $X$ be a product system over $P$.
Then $\T_\la(X)$ is exact if and only if $\ca(t)$ is exact for some (and thus for all) injective Fock covariant representation $t$ of $X$.
\end{corollary}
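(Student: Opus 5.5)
We prove the two implications separately; one direction is formal and the other runs through universality.

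\emph{Exactness of $\T_\la(X)$ implies exactness of $\ca(t)$.} Since $G$ is amenable, \cite[Theorem 20.7 and Proposition 20.2]{Exe17} identify $\T_{\cov}^{\fock}(X)$ with $\T_\la(X)$. Hence, for every Fock covariant representation $t$, the canonical map $t_\ast$ factors through a $*$-epimorphism $\T_\la(X) \to \ca(t)$. Exactness passes to quotients, so exactness of $\T_\la(X)$ yields exactness of $\ca(t)$ for every Fock covariant $t$, in particular for every injective one. This also gives the ``and thus for all'' clause once the converse is proved.

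\emph{Exactness of $\ca(t)$ for one injective Fock covariant $t$ implies exactness of $\T_\la(X)$.} Each $\bo{K}_{\Bx, t_\ast}$ is a C*-subalgebra of $\ca(t)$, hence exact. By Proposition~\ref{P:Kcoresiso}, the canonical quotient map $\T_{\cov}^{\fock}(X) \simeq \T_\la(X) \to \ca(t)$ restricts to a $*$-isomorphism on each $\bo{K}$-core. One point needs checking: this map sends $\bo{K}_{\Bx, \la_\ast}$ onto $\bo{K}_{\Bx, t_\ast}$. This holds because it maps generators $\la_p(\xi_p)$ to $t_p(\xi_p)$, and hence maps the spanning words of one core onto those of the other. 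Therefore $\bo{K}_{\Bx, \la_\ast} \simeq \bo{K}_{\Bx, t_\ast}$ is exact for every $\Bx \in \J$.

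Now the Fock representation is equivariant, since $\T_\la(X)$ carries the normal coaction $\de_\la$. Corollary~\ref{C:Excores-1}, applied to $\la$ with the implication (i) $\Rightarrow$ (iv), gives that $\T_\la(X)$ is exact. Alternatively, amenable groups are exact, so Corollary~\ref{C:Excores-3} [(ii) $\Rightarrow$ (i)] applies directly.

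The only step needing care is the identification of cores under the quotient map, and Proposition~\ref{P:Kcoresiso} supplies it. Note that $t$ itself need not be equivariant: we pass through $\la$, which is equivariant, rather than applying Corollary~\ref{C:Excores-1} to $t$.
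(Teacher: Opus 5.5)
Your proof is correct and follows essentially the same route as the paper. One direction uses universality of $\T_\la(X)$ over Fock covariant representations (available by amenability of $G$). The other uses Proposition~\ref{P:Kcoresiso} to transfer exactness of the $\bo{K}$-cores from $t$ to $\la$, followed by Corollary~\ref{C:Excores-3}, or equivalently Corollary~\ref{C:Excores-1} applied to the equivariant $\la$. Your remark that $t$ need not be equivariant, so that one must pass through $\la$, is exactly the point requiring care.
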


\begin{remark}
The use of Theorem~\ref{T:Kcores} allows us to remove the assumption of linear independence of the $\bo{K}$-cores from \cite[Theorem 5.6]{KLS25} for the product systems of \cite[Theorem 5.4]{KLS25}, and show that exactness of $A$ is equivalent to exactness of $\ca(t)$ for any injective Fock covariant representation $t$ in these results.
\end{remark}

As an application to product systems with one-dimensional fibres we retrieve the following corollary.

\begin{corollary}\label{C:exsem}
Let $P$ be a unital subsemigroup of an exact discrete group $G$.
Then the reduced C*-algebra $\ca_\la(P)$ of the semigroup $P$ is exact.
\end{corollary}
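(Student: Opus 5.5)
We will deduce Corollary~\ref{C:exsem} from Corollary~\ref{C:Excores-3} applied to the product system $X_P$ of Example~\ref{E:semig}, for which $\T_\la(X_P) \simeq \ca_\la(P)$. The equivalence [(i) $\Leftrightarrow$ (ii)] of Corollary~\ref{C:Excores-3} holds for any unital subsemigroup $P$ of an exact discrete group $G$ and any product system, so it suffices to show that every core $\bo{K}_{\Bx, \la_\ast}$ of the Fock representation of $X_P$ is exact.

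The core description comes from Example~\ref{E:semig}. The Fock representation $\la$ of $X_P$ corresponds to the left regular representation $V$, which satisfies (T1)--(T4), and $\la$ is a Fock covariant representation. Hence $\bo{K}_{\mt,\la_\ast} = (0)$, by \eqref{Eq:faKempty} or by (T2). For $\Bx \neq \mt$, choose a neutral word $\al$ with $K(\al) = \Bx$; condition (T3) then gives $\bo{K}_{\Bx,\la_\ast} = \bC \dot{V}_\al$.

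Two points need a brief check here.

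\begin{enumerate}
\item \emph{Applicability of the example.} The example describes $\bo{K}$-cores for unital representations. The spanning words in the definition of $\bo{K}_{\Bx,\la_\ast}$ include those with $\eps, \eps' \in \{0,1\}$. Since $X_P$ is saturated, because $v_pv_q = v_{pq}$, Proposition~\ref{P:fanoeps} lets us restrict to words of the form $\la_{p_1}(X_{p_1})^*\la_{q_1}(X_{q_1})\cdots$ with neutral indices and $K(\al)=\Bx$. Each such word is a scalar multiple of $\dot{V}_\al$, and (T3) identifies all of them.
\item \emph{Exactness of the core.} Since $\bo{K}_{\Bx,\la_\ast}$ is a C*-algebra spanned by the single element $\dot{V}_\al$, it has dimension at most one. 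It is therefore either $(0)$ or $\bC$, both of which are finite-dimensional and hence exact. Concretely, $\dot{V}_\al$ is the projection onto $\ell^2(\Bx)$, which is consistent with this.
\end{enumerate}

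Thus (ii) of Corollary~\ref{C:Excores-3} holds, and (i) gives that $\T_\la(X_P) \simeq \ca_\la(P)$ is exact. By the final clause of Corollary~\ref{C:Excores-3}, every quotient of $\ca_\la(P)$ that is injective on $A = \bC$ is also exact, which covers every nonzero quotient.

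The only step that needs care is the identification $\bo{K}_{\Bx,\la_\ast} = \bC\dot V_\al$, in particular the reduction from words with $\eps,\eps'$ to neutral words. For $X_P$ this is direct: $v_p = v_{e_P}^* v_p$ and $v_{e_P} = 1$, so the reduction is clear even without invoking the Cohen factorisation argument of Proposition~\ref{P:fanoeps}.
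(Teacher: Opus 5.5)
Your proposal is correct and follows the paper's proof. You identify $\ca_\la(P)$ with $\T_\la(X_P)$ via Example~\ref{E:semig}, note that every core $\bo{K}_{\Bx,\la_\ast}$ is at most one-dimensional and hence exact, and conclude by the implication [(ii) $\Rightarrow$ (i)] of Corollary~\ref{C:Excores-3}; your extra checks on reducing to neutral words and on quotients are sound but go beyond what the paper needs.
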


\begin{proof}
By Example~\ref{E:semig} we have that $\ca_\la(P)\simeq \T_\la(X_P)$, and that $\bo{K}_{\Bx, \la_\ast}$ is a finite-dimensional C*-algebra for every $\Bx\in \J$, and hence exact. Corollary~\ref{C:Excores-3} then yields that $\T_\la(X_P)$ is exact.
\end{proof}

\subsection{Nuclear embeddability of the $\bo{K}$-cores}

We now pass to nuclearity.
No assumption is made on the product system or on the semigroup beyond the standing hypothesis that $P$ sits in an amenable discrete group.
This extends the analogous results in \cite{Kak20, KKLL23, Kat04, LS22}.

\begin{theorem}\label{T:Nuccores}
Let $P$ be a unital subsemigroup of an amenable discrete group $G$ and let $X$ be a product system over $P$.
Let $t$ be an equivariant representation of $X$.
Then the following are equivalent:
\begin{enumerate}
\item $\bo{K}_{\Bx,t_\ast}$ is nuclearly embedded in $[\ca(t)]_{e_P}$ for every $\Bx \in \J$.
\item $\bo{B}_{\F,t_\ast}$ is nuclearly embedded in $[\ca(t)]_{e_P}$ for every finite and $\cap$-closed $\F \subseteq \J$.
\item $[\ca(t)]_{e_P}$ is nuclear.
\item $\ca(t)$ is nuclear.
\end{enumerate}
\end{theorem}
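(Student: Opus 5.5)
I will prove the cycle of implications (i) $\Rightarrow$ (ii) $\Rightarrow$ (iii) $\Rightarrow$ (i), and separately (iii) $\Leftrightarrow$ (iv). The structure mirrors Theorem~\ref{T:Kcores} and Corollary~\ref{C:Excores-1}, with Theorem~\ref{T:Nucsum} playing the role of Theorem~\ref{T:Exsum}.

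For [(i) $\Rightarrow$ (ii)] I argue by induction on $|\F|$, taking $\C := [\ca(t)]_{e_P}$ as the ambient algebra throughout. When $|\F| = 1$ the claim is just the hypothesis. For the inductive step, choose $\Bx \in \F$ maximal and set $\F' := \F \setminus \{\Bx\}$. By Remark~\ref{R:ideal}, $\F'$ is $\cap$-closed and $\bo{B}_{\F',t_\ast}$ is an ideal in $\bo{B}_{\F,t_\ast} = \bo{B}_{\F',t_\ast} + \bo{K}_{\Bx,t_\ast}$, which is a C*-subalgebra of $\C$. By the induction hypothesis $\bo{B}_{\F',t_\ast}$ is nuclearly embedded in $\C$, and by (i) so is $\bo{K}_{\Bx,t_\ast}$. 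Theorem~\ref{T:Nucsum}, applied with $\J := \bo{B}_{\F',t_\ast}$ and $\A := \bo{K}_{\Bx,t_\ast}$, then shows that $\bo{B}_{\F,t_\ast}$ is nuclearly embedded in $\C$.

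For [(ii) $\Rightarrow$ (iii)] I use equivariance: by Remark~\ref{R:ideal}, $[\ca(t)]_{e_P}$ is the inductive limit of the increasing net $\bo{B}_{\F,t_\ast}$ over the finite $\cap$-closed $\mt \in \F \subseteq \J$. The net is directed, since the $\cap$-closure of a union of two finite $\cap$-closed families is again finite. By (ii) each member of the net is nuclearly embedded, and the inductive limit criterion recalled before Theorem~\ref{T:Nucsum} gives that $[\ca(t)]_{e_P}$ is nuclear. For [(iii) $\Rightarrow$ (i)], if $[\ca(t)]_{e_P}$ is nuclear then its identity map is nuclear, so its restriction to any C*-subalgebra, in particular to $\bo{K}_{\Bx,t_\ast}$, is nuclear. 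Here one only needs the observation that a nuclear map composed with the inclusion map (which is completely contractive and completely positive) remains nuclear.

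For [(iii) $\Leftrightarrow$ (iv)], I realise $\ca(t)$ as a Fell bundle algebra. Since $G$ is amenable, \cite[Theorem 20.7 and Proposition 20.2]{Exe17} give $\ca(t) \simeq \ca_\la(\{[\ca(t)]_g\}_g) \simeq \ca_{\max}(\{[\ca(t)]_g\}_g)$. The known fact that the cross-sectional algebra of a Fell bundle over an amenable group is nuclear if and only if its unit fibre is nuclear (\cite[Proposition 25.10]{Exe17}, or the corresponding statement in that chapter) then yields both directions. Alternatively, for (iv) $\Rightarrow$ (iii) I can use the faithful conditional expectation onto $[\ca(t)]_{e_P}$, which exists because the coaction is normal in the amenable case: nuclearity passes to the range of a conditional expectation.

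I expect the main obstacle to be locating the precise reference for the Fell bundle nuclearity statement (iii) $\Rightarrow$ (iv). If no clean citation is available, I would argue directly: compose the faithful conditional expectation with amenability approximations, that is, positive definite finitely supported functions on $G$ acting by Schur multipliers. This reduces the problem to the nuclearity of finitely many fibres, each of which is a Hilbert bimodule over the nuclear algebra $[\ca(t)]_{e_P}$. The remaining steps are formal once Theorem~\ref{T:Nucsum} is available.
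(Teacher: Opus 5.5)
Your proposal is correct and is essentially the paper's proof. It uses the same four ingredients: induction on $|\F|$ via Theorem~\ref{T:Nucsum} with ambient algebra $[\ca(t)]_{e_P}$; the inductive-limit criterion for (ii)$\Rightarrow$(iii); the Fell bundle identification together with Exel's nuclearity result for the unit fibre in (iii)$\Rightarrow$(iv); and $\id_{[\ca(t)]_{e_P}} = \mathcal{E}_\delta \circ \iota$ for (iv)$\Rightarrow$(iii). The conditional expectation $\mathcal{E}_\delta$ exists for any coaction, and normality only makes it faithful, which is not needed here, so your Schur-multiplier fallback is unnecessary.
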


\begin{proof}
\noindent
[(i) $\Rightarrow$ (ii)].
Let $\F \subseteq \J$ be finite and $\cap$-closed.
We show that $\bo{B}_{\F,t_\ast}$ is nuclearly embedded in $[\ca(t)]_{e_P}$ by induction on $|\F|$.
If $|\F| = 1$, then $\bo{B}_{\F,t_\ast} = \bo{K}_{\Bx,t_\ast}$ for some $\Bx \in \F$, and the claim holds by hypothesis.
Now suppose that $|\F| = k+1$ and that the claim holds for all finite and $\cap$-closed families of cardinality at most $k$.
Write $\F'=\F \setminus \{\Bx\}$ with $\Bx$ maximal in $\F$, so that $\F'$ is $\cap$-closed and, by Remark~\ref{R:ideal}, the C*-algebra $\bo{B}_{\F',t_\ast}$ is an ideal in $\bo{B}_{\F,t_\ast} = \bo{B}_{\F',t_\ast} + \bo{K}_{\Bx,t_\ast}$.
By the inductive hypothesis $\bo{B}_{\F',t_\ast}$ is nuclearly embedded in $[\ca(t)]_{e_P}$, and by our hypothesis so is $\bo{K}_{\Bx,t_\ast}$.
An application of Theorem~\ref{T:Nucsum}
yields that $\bo{B}_{\F,t_\ast}$ is nuclearly embedded in $[\ca(t)]_{e_P}$, completing the induction.

\smallskip
\noindent
[(ii) $\Rightarrow$ (i)].
This is immediate by applying item (ii) to $\F = \{\Bx\}$.

\smallskip
\noindent
[(ii) $\Rightarrow$ (iii)].
The fixed point algebra $[\ca(t)]_{e_P}$ is the inductive limit of the C*-subalgebras $\bo{B}_{\F,t_\ast}$ over the $\cap$-closed and finite $\mt \in \F \subseteq \J$.
Since the inclusions $\bo{B}_{\F,t_\ast} \hookrightarrow [\ca(t)]_{e_P}$ are nuclear we obtain that $[\ca(t)]_{e_P}$ is nuclear.

\smallskip
\noindent
[(iii) $\Rightarrow$ (ii)].
This is immediate.

\smallskip
\noindent
[(iii) $\Rightarrow$ (iv)].
By \cite[Theorem 20.7 and Proposition 20.2]{Exe17} amenability of $G$ gives
\[
\ca(t) \simeq \ca_\la(\{[\ca(t)]_g\}_g) \simeq \ca_{\max}(\{[\ca(t)]_g\}_g),
\]
and the nuclear C*-algebra $[\ca(t)]_{e_P}$ is the unit fibre of this Fell bundle.
Hence $\ca(t)$ is nuclear by \cite[Proposition~25.10]{Exe17}.

\smallskip
\noindent
[(iv) $\Rightarrow$ (iii)].
Let $\de$ be the coaction by $G$ on $\ca(t)$ and let $\E_\de\colon \ca(t) \to [\ca(t)]_{e_P}$ be the induced conditional expectation.
If $\ca(t)$ is nuclear, then the inclusion $\io \colon [\ca(t)]_{e_P} \to \ca(t)$ is nuclear, and therefore so is $\id_{[\ca(t)]_{e_P}} = \E_\de \circ \io$,
showing that $[\ca(t)]_{e_P}$ is nuclear.
\end{proof}

We now obtain a direct proof of the fact that the reduced C*-algebra of a unital subsemigroup of an amenable discrete group is nuclear.
This is done in \cite[Corollary 4.10]{LS22} with the use of partial crossed products. See also \cite[Theorem 5.6.44]{CELY17}.

\begin{corollary}\cite[Theorem 5.6.44]{CELY17}, \cite[Corollary 4.10]{LS22}\label{C:nucsem}
Let $P$ be a unital subsemigroup of an amenable discrete group $G$.
Then $\ca_\la(P)$ is nuclear.
\end{corollary}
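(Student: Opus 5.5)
We follow the same route as Corollary~\ref{C:exsem}, now feeding the finite-dimensional cores into Theorem~\ref{T:Nuccores} instead of Corollary~\ref{C:Excores-3}.

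By Example~\ref{E:semig} there is a canonical $*$-isomorphism $\ca_\la(P) \simeq \T_\la(X_P)$, and this isomorphism matches the left regular representation $V$ with the Fock representation $\la$ of $X_P$. It therefore suffices to show that $\T_\la(X_P)$ is nuclear. The Fock representation is equivariant, as recalled after the definition of equivariance, and $G$ is amenable. Hence Theorem~\ref{T:Nuccores} applies to $t = \la$, and we only need to verify item (i): each core $\bo{K}_{\Bx,\la_\ast}$ is nuclearly embedded in $[\T_\la(X_P)]_{e_P}$.

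To identify the cores, recall that $\la$ corresponds to $V$, which satisfies (T1)--(T4). Example~\ref{E:semig} then gives:
\begin{itemize}
\item $\bo{K}_{\mt,\la_\ast} = (0)$, which also follows from \eqref{Eq:faKempty};
\item $\bo{K}_{\Bx,\la_\ast} = \bC \dot{V}_\al$ for any neutral word $\al$ with $K(\al) = \Bx \neq \mt$.
\end{itemize}
So every core is a C*-algebra of dimension at most one, and in particular finite-dimensional. A finite-dimensional C*-algebra is nuclear, and an injective $*$-homomorphism out of a nuclear C*-algebra is a nuclear map. Therefore each inclusion $\bo{K}_{\Bx,\la_\ast} \hookrightarrow [\T_\la(X_P)]_{e_P}$ is nuclear.

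Theorem~\ref{T:Nuccores}, [(i) $\Rightarrow$ (iv)], then yields that $\T_\la(X_P) \simeq \ca_\la(P)$ is nuclear. There is no real obstacle. The only point to check is that the identification of the cores in Example~\ref{E:semig} applies to $\la$ itself. It does, because that identification uses only (T2) and (T3), and both hold for $V$.
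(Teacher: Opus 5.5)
Your proof is correct and matches the paper's argument: identify $\ca_\la(P)$ with $\T_\la(X_P)$ via Example~\ref{E:semig}, note that every $\bo{K}$-core of $\la$ is finite-dimensional and hence nuclearly embedded, and apply Theorem~\ref{T:Nuccores}. Your check that the core identification of Example~\ref{E:semig} applies to $\la$ itself, because the left regular representation $V$ satisfies (T2) and (T3), is a detail the paper leaves implicit.
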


\begin{proof}
By Example~\ref{E:semig} we have $\ca_\la(P)\simeq \T_\la(X_P)$ and that $\bo{K}_{\Bx, \la_\ast}$ is a finite-dimensional C*-algebra for every $\Bx \in \J$, and hence nuclear.
Theorem~\ref{T:Nuccores} then yields that $\T_\la(X_P)$ is nuclear.
\end{proof}

\begin{remark}
Unlike Corollary~\ref{C:exsem}, we recall that Corollary~\ref{C:nucsem} does not hold if $G$ is just assumed exact.
For an example, let $G = P = \bF_2$ and consider the product system $X$ over $P$ with $X_p = \bC$ for all $p \in P$.
Every non-empty constructible ideal of a group equals the group itself, so that $\J = \{\mt, P\}$, and the Fock representation gives
\[
\T_\la(X) = \ca_\la(\bF_2),
\quad
\bo{K}_{\mt,\la_\ast}=(0) \qand
\bo{K}_{P, \la_\ast} = [\T_\la(X)]_{e_P} = \bC.
\]
While the $\bo{K}$-cores are nuclear, $\ca_\la(\bF_2)$ is not nuclear.
We note that $\ca_\la(\bF_2)$ is exact, so this example is consistent with Corollary~\ref{C:Excores-3}.

Moreover the converse of Corollary~\ref{C:nucsem} does not hold, i.e., it may be that $\ca_\la(P)$ is nuclear but $P$ cannot be embedded in any amenable group.
A counterexample was provided recently by Dor-On--Li \cite{DL26}.
\end{remark}

\section{Compactly aligned product systems}\label{S:capsfa}

We next consider the problem of obtaining exactness or nuclear embeddability of the $\bo{K}$-cores from that of the coefficient C*-algebra $A$.
We obtain such results with additional hypotheses on the semigroup and the structure of the product systems.

\subsection{Finitely aligned semigroups}

Let $P$ be a unital subsemigroup of a discrete group $G$.
A right ideal $\Bx$ is called \emph{finitely generated} if there are $p_1, \dots, p_m \in P$ with $\Bx = \bigcup_{i=1}^m p_i P$.
We regard the empty ideal as finitely generated by the empty family.

A generating family $\{p_1, \dots, p_m\}$ is called \emph{irredundant} if $p_i P \not\subseteq p_j P$ whenever $i \neq j$.
Every finite generating family contains an irredundant one, obtained by discarding redundant elements one at a time.

We gather below some elementary results from \cite{CG21, LV20, Li23, Spi12, Spi14}.
For the convenience of the reader we provide the proofs.

\begin{lemma} \label{L:famax}
Let $P$ be a unital subsemigroup of a discrete group $G$.
Let $\Bx$ be a right ideal of $P$ and let $\{p_1, \dots, p_m\}$ be an irredundant generating family for $\Bx$.
If $q \in \Bx$ satisfies $p_i \in q P$ for some $i$, then $q P = p_i P$ and $p_i = q x$ for a unique $x \in P^*$.
\end{lemma}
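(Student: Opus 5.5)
The plan is a direct argument using only that $P$ embeds in the group $G$, so left cancellation holds and inverses make sense in $G$.

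First, unwind the hypotheses. Since $q \in \Bx = \bigcup_{j=1}^m p_j P$, we can pick $j$ and $y \in P$ with $q = p_j y$. Since $p_i \in qP$, we can write $p_i = q x$ with $x \in P$. Combining these gives $p_i = p_j y x$, so $p_i P \subseteq p_j P$. Irredundancy then forces $j = i$. Hence $q = p_i y$ and $p_i = p_i y x$.

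Next, cancel. In $G$, the equation $p_i = p_i y x$ gives $yx = e_G = e_P$. Since $G$ is a group, a one-sided inverse is two-sided, so also $xy = e_P$. As $x, y \in P$, this means $x \in P^*$ with inverse $y$.

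It follows that $qP = p_i y P = p_i P$, because $yP = P$ for the unit $y$; this is also immediate from $qP \subseteq p_i P \subseteq qP$. Uniqueness of $x$ comes from left cancellation in $G$: if $p_i = q x = q x'$, then $x = q^{-1} p_i = x'$.

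The only point needing care is that the generator containing $q$ might a priori be a different $p_j$. Irredundancy is exactly what rules this out. Everything else is routine group cancellation.
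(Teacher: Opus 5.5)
Your proof is correct and takes essentially the same approach as the paper's: you locate the generator $p_j$ containing $q$, use $p_i P \subseteq qP \subseteq p_j P$ and irredundancy to force $j=i$, and then cancel in $G$ to see that $x$ is a unit with inverse $y$, with uniqueness coming from left cancellation. The only cosmetic difference is that you obtain $qP = p_iP$ after showing $x \in P^*$, whereas the paper reads it off directly from the chain of inclusions before doing the cancellation.
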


\begin{proof}
Since $q \in \Bx$ we have $q \in p_j P$ for some $j$, and therefore $p_i P \subseteq q P \subseteq p_j P$.
Irredundancy forces $j = i$, and thus $q P = p_i P$.
Write $p_i = q x$ and $q = p_i y$ with $x, y \in P$.
Then 
\[
p_i e_P = p_i = p_i (y x),
\]
and left cancellation gives $y x = e_P$.
Since this identity takes place in $G$ we get $x^{-1} = y \in P$, and hence $x \in P^*$.
Uniqueness follows from left cancellation.
\end{proof}

We will use the following lemma.

\begin{lemma} \label{L:fairr}
Let $P$ be a unital subsemigroup of a discrete group $G$.
Let $\Bx$ be a right ideal of $P$ admitting a finite generating family, and let 
\[
\{s_1, \dots, s_m\} \AND \{q_1, \dots, q_k\}
\]
be irredundant generating families for $\Bx$.
Then $m = k$ and there is a bijection $\si$ of $\{1, \dots, m\}$ and elements $x_i \in P^*$ such that $s_i = q_{\si(i)} x_i$ for all $i$.
Consequently, $\Bx$ is a disjoint union of finitely many principal right ideals if and only if $s_i P \cap s_j P = \mt$ for all $i \neq j$.
\end{lemma}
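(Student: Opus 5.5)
The plan is to apply Lemma~\ref{L:famax} to each generator of one family in turn, using the other family as the irredundant generating family.

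First I build the map $\si$. Fix $i$. Since $s_i \in \Bx = \bigcup_{j} q_j P$, there is some $j$ with $s_i \in q_j P$.

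Now apply Lemma~\ref{L:famax} to the irredundant family $\{s_1,\dots,s_m\}$ with $q := q_j \in \Bx$. The relation $s_i \in q_j P$ gives $q_j P = s_i P$ and $s_i = q_j x_i$ for a unique $x_i \in P^*$. Set $\si(i) := j$.

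This $j$ is uniquely determined. If also $s_i \in q_{j'} P$, the same argument gives $q_{j'}P = s_iP = q_jP$. Irredundancy of $\{q_1,\dots,q_k\}$ then forces $j' = j$.

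Next I show $\si$ is a bijection. For injectivity, suppose $\si(i) = \si(i')$. Then $s_iP = q_{\si(i)}P = s_{i'}P$, and irredundancy of the $s$-family forces $i = i'$.

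The symmetric argument, with the roles of the two families swapped, produces an injection $\{1,\dots,k\} \to \{1,\dots,m\}$. Hence $m = k$, and $\si$ is a bijection.

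Now the consequence. Suppose $\Bx = \bigsqcup_{j} q_j P$ is a disjoint union of principal right ideals. I may assume each $q_jP$ is nonempty, which is automatic since $q_j \in q_jP$.

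Distinct disjoint ideals cannot be contained in one another, so $\{q_j\}$ is already irredundant. By the first part, the $s_iP$ are exactly the $q_{\si(i)}P$, since $s_i = q_{\si(i)}x_i$ with $x_i$ invertible gives $s_iP = q_{\si(i)}P$. They are therefore pairwise disjoint.

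The converse is immediate: if $s_iP \cap s_jP = \mt$ for $i \ne j$, then $\Bx = \bigcup_i s_iP$ is itself such a disjoint union.

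The only delicate point is the uniqueness of $j$ in the first step, which is needed for $\si$ to be well defined. It relies on irredundancy of the $q$-family rather than the $s$-family, and I expect that to be the main place needing care. Everything else is bookkeeping with Lemma~\ref{L:famax}.
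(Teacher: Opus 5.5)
Your proof is correct and is essentially the paper's argument. Both rely on Lemma~\ref{L:famax} and irredundancy; the paper composes two choice maps $\si,\tau$ and uses $s_iP \subseteq q_{\si(i)}P \subseteq s_{\tau\si(i)}P$ to see they are mutually inverse, whereas you obtain $s_iP = q_{\si(i)}P$ first and conclude by counting injections in both directions. Note that the uniqueness of $j$ is not actually needed for $\si$ to be well defined: any choice of $j$ works, and your injectivity argument never uses uniqueness.
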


\begin{proof}
Since $s_i \in \Bx = \bigcup_j q_j P$ we may pick $\si(i)$ with $s_i \in q_{\si(i)} P$, and likewise we may pick $\tau(j)$ with $q_j \in s_{\tau(j)} P$.
Then
\[
s_i P \subseteq q_{\si(i)} P \subseteq s_{\tau \si (i)} P,
\]
and irredundancy of $\{s_1, \dots, s_m\}$ forces $\tau \si (i) = i$;
symmetrically $\si \tau (j) = j$.
Hence $\si$ and $\tau$ are mutually inverse bijections and $m = k$.
Moreover $s_i P = q_{\si(i)} P$, and Lemma~\ref{L:famax} applied to $q := q_{\si(i)} \in \Bx$ and the irredundant family $\{s_1, \dots, s_m\}$ gives $s_i = q_{\si(i)} x_i$ with $x_i \in P^*$.

For the last statement, suppose $\Bx = \bigcup_{j=1}^k q_j P$ with the union disjoint.
Then $\{q_1, \dots, q_k\}$ is irredundant.
Indeed, if $q_j P \subseteq q_\ell P$ with $j \neq \ell$, then $q_j \in q_j P \cap q_\ell P = \mt$, which is absurd.
By the first part $s_i P = q_{\si(i)} P$ for all $i$, and therefore the family $\{ s_i P \}_{i=1}^m$ is pairwise disjoint.
The converse is trivial.
\end{proof}

We obtain the following proposition.

\begin{proposition} \label{P:fafibres}
Let $P$ be a unital subsemigroup of a discrete group $G$ and let $X$ be a saturated product system over $P$.
If $\mt \neq \Bx \in \J$ is a finitely generated constructible ideal with irredundant generating family $\{p_1, \dots, p_m\}$, then the map
\[
\Phi_{\Bx} \colon \bo{K}_{\Bx, \la_\ast} \to \bigoplus_{i=1}^m \L(X_{p_i});\
b \mapsto \left( \ev_{p_1}(b), \dots, \ev_{p_m}(b) \right),
\]
is an injective $*$-homomorphism.
\end{proposition}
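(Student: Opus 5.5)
Each coordinate of $\Phi_{\Bx}$ is already a $*$-homomorphism by Lemma~\ref{L:fadiag}(i), because every $p_i$ lies in $\Bx$. Hence $\Phi_{\Bx}$ is a $*$-homomorphism into the direct sum, and the real content of the statement is injectivity.

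For injectivity I will use Lemma~\ref{L:fadiag}(i), which says that $b \in \bo{K}_{\Bx, \la_\ast}$ is determined by the family $(\ev_r(b))_{r \in \Bx}$, since $b$ annihilates $X_r$ for $r \notin \Bx$ and leaves $X_r$ invariant for $r \in \Bx$. So suppose $\ev_{p_i}(b) = 0$ for all $i = 1, \dots, m$, and fix $r \in \Bx$. As $\Bx = \bigcup_{i=1}^m p_i P$, there are an index $i$ and some $s \in P$ with $r = p_i s$. Lemma~\ref{L:fadiag}(ii) then gives
\[
\ev_r(b) = \ev_{p_i s}(b) = i_{p_i}^{p_i s}(\ev_{p_i}(b)) = 0.
\]
Therefore $b X_r = (0)$ for every $r \in P$. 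Since $\F X$ is the closed span of its fibres $X_r$ and $b$ is adjointable (hence bounded), it follows that $b = 0$.

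There is no real obstacle here. The only point to check is that Lemma~\ref{L:fadiag}(ii) applies, and it does: it needs $r \in \Bx$, which holds for $p_i$, and it needs saturation, which is assumed. Irredundancy of the generating family plays no role in injectivity; any finite generating family works. Irredundancy only matters for making the target canonical up to the unit twists of Lemma~\ref{L:fairr}, which is presumably where it is used later.
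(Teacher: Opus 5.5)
Your proof is correct and follows essentially the same route as the paper. Item (i) of Lemma~\ref{L:fadiag} gives the $*$-homomorphism property, item (ii) propagates the vanishing of $\ev_{p_i}(b)$ to every $r \in \Bx$, and item (i) then forces $b=0$; your side remark that irredundancy plays no role in injectivity is also accurate.
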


\begin{proof}
By item (i) of Lemma~\ref{L:fadiag} we have that $\Phi_\Bx$ is a $*$-homomorphism.
Suppose that $\Phi_\Bx(b) = 0$ and let $r$ be in $\Bx$.
Then $r = p_i s$ for some $i$ and some $s \in P$, and item (ii) of Lemma~\ref{L:fadiag} gives 
\[
\ev_r(b)=\ev_{p_i s}(b) = i_{p_i}^{p_i s}(\ev_{p_i}(b)) = 0.
\]
By item (i) of Lemma~\ref{L:fadiag} we conclude that $b = 0$.
\end{proof}

We next pass to finitely aligned semigroups.

\begin{definition} \label{D:fa}
A unital subsemigroup $P$ of a discrete group $G$ is called \emph{finitely aligned} if $pP \cap qP$ is a finitely generated right ideal for all $p, q \in P$.
\end{definition}

As $P$ is unital, this is the \emph{right ideal Howson property} of \cite{CG21} and the finite alignment of \cite[Section 8]{ES18}.
Spielberg \cite{Spi12, Spi14, Spi20} identified the finite alignment property for categories of paths and, more generally, for left cancellative small categories.
This class contains the right LCM semigroups, which are the \emph{singly aligned} semigroups.

\begin{example} \label{Ex:fa}
Finitely aligned semigroups that are not right LCM semigroups are abundant.
For example, the numerical semigroup $P = \{0, 2, 3, 4, \dots\} \subseteq \bZ$ is finitely aligned, while
\[
(2 + P) \cap (3 + P) = \{5, 6, 7, \dots\} = (5+P) \cup (6+P)
\]
is not principal, and hence $P$ is not a right LCM semigroup.
\end{example}

One of the key properties of finite alignment is that it implies finite generation for constructible ideals.
This is known, see for example \cite{Li23}, but we include a proof in our setting for the convenience of the reader.

\begin{proposition}\label{P:fajfg}
Let $P$ be a unital subsemigroup of a discrete group $G$.
Then the following are equivalent:
\begin{enumerate}
\item $P$ is finitely aligned.
\item Every constructible ideal $\Bx\in \J$ is finitely generated.
\end{enumerate}
\end{proposition}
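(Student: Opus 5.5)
The converse [(ii) $\Rightarrow$ (i)] is immediate: for $p, q \in P$ the set $pP \cap qP$ is constructible, since $\J$ contains $P$ and is closed under left multiplication and finite intersections. Hence it is finitely generated, which is exactly Definition~\ref{D:fa}.

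For [(i) $\Rightarrow$ (ii)] we work with the reduced form of \cite[Lemma 3.3]{Li12}. Every non-empty $\Bx \in \J$ can be written as $q_n^{-1} p_n \dots q_1^{-1} p_1 P$, and $\mt$ is finitely generated by convention. We argue by induction on $n$. The induction step rests on two observations about a finitely generated right ideal $\By = \bigcup_{j=1}^m s_j P$.

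\emph{Left multiplication preserves finite generation.} For $p \in P$ we have $p\By = \bigcup_j (ps_j) P$.

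\emph{Pre-images preserve finite generation.} For $q \in P$ we have $q^{-1}\By = \bigcup_j q^{-1}(s_j P)$, so it suffices to treat a single principal ideal $q^{-1}(sP)$. An element $r \in P$ lies in $q^{-1}(sP)$ precisely when $qr \in qP \cap sP$. By finite alignment,
\[
qP \cap sP = \bigcup_{i=1}^k t_i P
\]
for some $t_i \in P$, or the intersection is empty; in the latter case $q^{-1}(sP) = \mt$. Each $t_i$ lies in $qP$, so we may write $t_i = q u_i$ with $u_i \in P$. Then
\[
qr \in \bigcup_i q u_i P
\]
holds, by left cancellation in $G$, if and only if $r \in \bigcup_i u_i P$. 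Therefore $q^{-1}(sP) = \bigcup_i u_i P$ is finitely generated.

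Starting from $P = e_P P$, we apply these two operations alternately: first $p_1$, then $q_1^{-1}$, then $p_2$, and so on. After $2n$ steps this shows that $q_n^{-1} p_n \dots q_1^{-1} p_1 P$ is finitely generated.

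The only step with real content is the pre-image computation. The care needed there is to use $t_i \in qP$ to factor $t_i = q u_i$, and to handle the empty intersection separately. Everything else is bookkeeping.
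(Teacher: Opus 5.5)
Your proof is correct and follows the paper's argument. The key step is identical: you compute $q^{-1}(sP) = \bigcup_i u_i P$ from a finite generating family $\{q u_i\}$ of $qP \cap sP$. The one difference is organisational. You use the reduced form of $\J$ to build each constructible ideal by alternating left multiplications and pre-images. The paper instead shows that the finitely generated right ideals form a family with all the closure properties defining $\J$, so it must also check finite intersections, via $\Bx \cap \By = \bigcup_{i,j} (r_i P \cap s_j P)$ and finite alignment; your use of the reduced form makes that check unnecessary.
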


\begin{proof}
\noindent
[(ii) $\Rightarrow$ (i)].
This is immediate, since $pP \cap qP \in \J$.

\smallskip
\noindent
[(i) $\Rightarrow$ (ii)].
Write $\J_{\rm fg}$ for the family of finitely generated right ideals of $P$.
Clearly $P = e_P P \in \J_{\rm fg}$ and $\mt \in \J_{\rm fg}$.
Since $\J$ is the smallest family of right ideals of $P$ that contains $P$ and $\mt$ and is closed under the operations $\Bx \mapsto p \Bx$ and $\Bx \mapsto p^{-1}\Bx$ for $p \in P$, and under finite intersections, it suffices to check that $\J_{\rm fg}$ shares the same closure properties.

Fix $\Bx = \bigcup_{i=1}^m r_i P \in \J_{\rm fg}$ and $p \in P$.
Then $p \Bx = \bigcup_{i=1}^m (p r_i) P \in \J_{\rm fg}$.
For the second operation, note that $p^{-1} \Bx = \bigcup_{i=1}^m p^{-1}(r_i P)$, and since the union of finitely generated right ideals is finitely generated, we may assume that $\Bx = r P$ is principal.
By finite alignment write $p P \cap r P = \bigcup_{j=1}^n w_j P$, and for each $j$ let $u_j \in P$ be the unique element with $w_j = p u_j$.
For $x \in P$ we have
\[
p x \in r P
\iff
p x \in pP \cap rP
\iff
p x \in p u_j P \text{ for some } j
\iff
x \in u_j P \text{ for some } j.
\]
Hence $p^{-1}(rP) = \bigcup_{j=1}^n u_j P \in \J_{\rm fg}$.
Finally, if $\Bx = \bigcup_{i=1}^m r_i P$ and $\By = \bigcup_{j=1}^n s_j P$ are in $\J_{\rm fg}$, then $\Bx \cap \By = \bigcup_{i, j} \left( r_i P \cap s_j P \right)$, and each $r_i P \cap s_j P$ is finitely generated by finite alignment.
Thus $\Bx \cap \By \in \J_{\rm fg}$, and the proof is complete.
\end{proof}

\subsection{Compactly aligned product systems over finitely aligned semigroups}

We now bring in compact alignment.
This additional property for a product system was identified by Fowler in the setting of quasi-lattice ordered groups \cite{Fow02}; for the product system of a higher-rank graph it is equivalent to finite alignment of the graph in the sense of Raeburn--Sims--Yeend \cite{RSY04}, see \cite[Theorem~5.4]{RS05}.
This condition allows for the Fock representation to have a tractable description, and in particular a Wick ordering.
There has been extensive work on compactly aligned product systems over quasi-lattice ordered groups or more generally over right LCM semigroups.

When $P$ is a right LCM semigroup, the intersection $pP \cap qP$ is either empty or principal.
Following the work of Kwa\'{s}niewski--Larsen \cite{KL19a, KL19b}, a saturated product system $X$ over a right LCM semigroup $P$ is called \emph{compactly aligned} if
\[
i_p^w(\K(X_p)) i_q^w(\K(X_q)) \subseteq \K(X_w),
\text{ if } 
 pP \cap qP = wP.
 \]
This is independent of the choice of $w$, see the discussion after \cite[Definition 2.9]{DKKLL22}.
In \cite[Proposition 3.6]{KP25} it is shown that for a compactly aligned saturated product system, an injective representation of $X$ is Fock covariant if and only if it is Nica covariant in the sense of \cite{KL19a, KL19b}.
In a somewhat converse direction, if the Fock representation of a saturated product system is Nica covariant, then the product system is compactly aligned, see \cite[Proposition 3.4]{KP25}.

For a finitely aligned $P$ the analogous requirement used in \cite{Spi20} is that the condition be imposed at each \emph{minimal} common right multiple.

\begin{definition} \label{D:mcm}
Let $P$ be a unital subsemigroup of a discrete group $G$ and let $p, q$ be in $P$.
An element $w \in pP \cap qP$ is a \emph{minimal common right multiple} of $p$ and $q$ if $w \in zP$ for $z \in pP \cap qP$ implies $w P = z P$ (and so $w=zx$ for some $x \in P^\ast$).
\end{definition}

The following extends the notion of compact alignment from the right LCM semigroups to finitely aligned semigroups.

\begin{definition} \label{D:famin}
Let $P$ be a unital finitely aligned subsemigroup of a discrete group $G$.
A saturated product system $X$ over $P$ is called \emph{compactly aligned} if
\[
i_p^w(k_p) \, i_q^w(k_q) \in \K(X_w)
\foral k_p \in \K(X_p), k_q \in \K(X_q),
\]
for all $p, q \in P$ and every minimal common right multiple $w$ of $p$ and $q$.
\end{definition}

Note that, if the left action on each fibre is by compacts, then compact alignment is automatically satisfied by a standard argument, see \cite[Proposition 4.7]{Lan95}.

\begin{remark} \label{R:famin}
By the discussion after \cite[Definition 2.9]{DKKLL22} we have that, if $w = zx$ for $x \in P^\ast$, then
\[
i_p^w(k_p) \, i_q^w(k_q) \in \K(X_w) 
\quad \Longleftrightarrow \quad
i_p^z(k_p) \, i_q^z(k_q) \in \K(X_z)
\]
for all $k_p \in \K(X_p)$ and $k_q \in \K(X_q)$.
Let $P$ be finitely aligned and let $\{w_1, \dots, w_m\}$ be an irredundant generating family for $pP \cap qP$.
By Lemma~\ref{L:famax} each $w_j$ is a minimal common right multiple of $p$ and $q$, and every element of $pP \cap qP$ lies in $w_j P$ for some $j$; hence every minimal common right multiple of $p$ and $q$ equals $w_j x$ for some $j$ and $x \in P^\ast$.
Therefore Definition~\ref{D:famin} reduces to the conditions at $w_1, \dots, w_m$, which by Lemma~\ref{L:fairr} do not depend on the choice of the irredundant family.
\end{remark}

The next lemma isolates the place where compact alignment enters.

\begin{lemma} \label{L:facomp}
Let $P$ be a unital finitely aligned subsemigroup of a discrete group $G$ and let $X$ be a compactly aligned saturated product system over $P$.
Let $u, v \in P$, let $w$ be a minimal common right multiple of $u$ and $v$, and write $w = u y_u = v y_v$ with $y_u, y_v \in P$.
Then
\[
i^{y_v}_{p,v} \left( \K(X_v, X_p) \right)
i^{y_u}_{u,q} \left( \K(X_q, X_u) \right)
\subseteq
\K \left( X_{q y_u}, X_{p y_v} \right)
\foral p, q \in P.
\]
\end{lemma}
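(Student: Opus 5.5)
By linearity and continuity of the maps $i^r_{p,q}$ (Lemma~\ref{L:faj}(i)), it suffices to treat rank-one operators. Take $S = \theta^{X_v, X_p}_{\xi_p, \eta_v}$ and $T = \theta^{X_q, X_u}_{\ze_u, \ka_q}$, and show that $i^{y_v}_{p,v}(S)\, i^{y_u}_{u,q}(T)$ is compact.

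First we factor each rank-one operator through $\K(X_v)$ and $\K(X_u)$. By the Cohen Factorisation Theorem we may write $\eta_v = \eta'_v a$ and $\ze_u = \ze'_u c$ with $a, c \in A$. Then
\[
S = \theta^{X_v,X_p}_{\xi_p,\eta'_v a}
\qand
T = \theta^{X_q,X_u}_{\ze'_u c, \ka_q},
\]
and each can be written as a product of an adjointable operator with a compact one on the relevant fibre. Concretely,
\[
\theta^{X_v,X_p}_{\xi_p, \eta_v} = \theta^{X_v,X_p}_{\xi_p, \eta'_v}\,\theta^{X_v}_{\eta'_v a', \eta'_v a''} \ \text{(suitably)},
\]
but a cleaner route is the following. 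Take an approximate unit $(e_\mu)$ of $\K(X_v)$. Then $S = \lim_\mu S e_\mu$, since $\theta_{\xi,\eta} e_\mu = \theta_{\xi, e_\mu \eta} \to \theta_{\xi,\eta}$. Similarly $T = \lim_\nu f_\nu T$ for an approximate unit $(f_\nu)$ of $\K(X_u)$. Contractivity of the $i$-maps lets us pass to the limit, so it suffices to handle $S k_v$ and $k_u T$ with $k_v \in \K(X_v)$ and $k_u \in \K(X_u)$.

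By Lemma~\ref{L:faj}(iv),
\[
i^{y_v}_{p,v}(S k_v) = i^{y_v}_{p,v}(S)\, i_v^{w}(k_v)
\qand
i^{y_u}_{u,q}(k_u T) = i_u^{w}(k_u)\, i^{y_u}_{u,q}(T),
\]
using $v y_v = w = u y_u$. The product therefore equals
\[
i^{y_v}_{p,v}(S)\,\big(i_v^w(k_v)\, i_u^w(k_u)\big)\, i^{y_u}_{u,q}(T).
\]
By Definition~\ref{D:famin}, compact alignment at the minimal common right multiple $w$ of $v$ and $u$ puts the middle factor in $\K(X_w)$. The outer factors are adjointable, lying in $\L(X_w, X_{py_v})$ and $\L(X_{qy_u}, X_w)$ respectively. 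Since compacts form an ideal under composition with adjointable maps between modules, the product lies in $\K(X_{qy_u}, X_{py_v})$. Closedness of the compacts then finishes the argument.

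The only delicate point is checking the domains and codomains of the $i$-maps. Here $i^{y_v}_{p,v}(S)$ maps $X_{vy_v} = X_w$ to $X_{py_v}$, and $i^{y_u}_{u,q}(T)$ maps $X_{qy_u}$ to $X_{uy_u} = X_w$, so the composition is well defined. We also need Lemma~\ref{L:faj}(iv) applied with $r = y_v$ and $r = y_u$, which gives exactly $i_v^{vy_v} = i_v^w$ and $i_u^{uy_u} = i_u^w$.

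Finally, the symmetry of Definition~\ref{D:famin} in $p$ and $q$ is harmless. Taking adjoints, $(i_u^w(k_u) i_v^w(k_v))^* = i_v^w(k_v^*) i_u^w(k_u^*)$, so compactness holds in either order. Moreover, the definition quantifies over all pairs, so we may apply it with the pair $(v,u)$ directly.
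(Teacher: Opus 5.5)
Your argument is correct and is essentially the paper's proof: approximate units of $\K(X_v)$ and $\K(X_u)$, together with item (iv) of Lemma~\ref{L:faj}, produce a middle factor $i_v^w(k_v)\, i_u^w(k_u)$ that is compact by compact alignment, and closedness of the compacts finishes. The reduction to rank-one operators and the abandoned Cohen factorisation display (the one marked ``suitably'') are unnecessary and should be deleted, since the approximate-unit step works directly for arbitrary compact $S$ and $T$.
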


\begin{proof}
Consider $k_1 \in \K(X_q, X_u)$ and $k_2 \in \K(X_v, X_p)$, and set
\[
S_1 := i^{y_u}_{u,q}(k_1) \in \L(X_{q y_u}, X_w)
\qand
S_2 := i^{y_v}_{p,v}(k_2) \in \L(X_w, X_{p y_v}),
\]
where we used that $u y_u = w = v y_v$.
Let $(e_\io)_\io$ and $(e'_\ka)_\ka$ be contractive approximate units for $\K(X_u)$ and $\K(X_v)$ respectively.
Then we have $\lim_\io e_\io k_1 = k_1$ and $\lim_\ka k_2 e'_\ka = k_2$ in norm, and therefore, by items (i) and (iv) of Lemma~\ref{L:faj}, we get
\[
S_1 = \lim_\io i^{y_u}_{u,q}(e_\io k_1) = \lim_\io i_u^{w}(e_\io) \, S_1
\qand
S_2 = \lim_\ka i^{y_v}_{p,v}(k_2 e'_\ka) = \lim_\ka S_2 \, i_v^{w}(e'_\ka).
\]
We thus obtain
\[
S_2 S_1 = \lim_\ka \lim_\io S_2 \, i_v^w(e'_\ka) \, i_u^w(e_\io) \, S_1.
\]
Since $w$ is a minimal common right multiple of $u$ and $v$, by compact alignment of $X$ we obtain that $i_v^w(e'_\ka) i_u^w(e_\io) \in \K(X_w)$, and therefore
\[
S_2 \, i_v^w(e'_\ka) \, i_u^w(e_\io) \, S_1 \in \K(X_{q y_u}, X_{p y_v}) \foral \io, \ka.
\]
Since $\K(X_{q y_u}, X_{p y_v})$ is norm closed the same holds for $S_2 S_1$.
\end{proof}

We come to the key technical result.
On each fibre of the Fock space, a word in the creation operators is the amplification of a single compact operator between two fibres, and the index of that fibre can be taken inside the constructible ideal attached to the word.

\begin{proposition} \label{P:fanormal}
Let $P$ be a unital finitely aligned subsemigroup of a discrete group $G$ and let $X$ be a compactly aligned saturated product system over $P$.
Let $p_1, q_1, \dots, p_n, q_n \in P$ and $\xi_{p_i} \in X_{p_i}, \xi_{q_i} \in X_{q_i}$ for $i = 1, \dots, n$, and consider 
\[
g := p_1^{-1} q_1 \cdots p_n^{-1} q_n \in G 
\ \text{ and } \
\Bx := q_n^{-1} p_n \dots q_1^{-1} p_1 P \in \J,
\]
and an element of the form
\[
b := \la_{p_1}(\xi_{p_1})^* \la_{q_1}(\xi_{q_1}) \cdots
\la_{p_n}(\xi_{p_n})^* \la_{q_n}(\xi_{q_n}) \in \T_\la(X).
\]
Then for every $r \in \Bx$ there exist $s \in \Bx$ and $z \in P$ with $r = s z$, and $S \in \K(X_s, X_{gs})$, such that
\[
b|_{X_r} = i^z_{gs, s}(S).
\]
\end{proposition}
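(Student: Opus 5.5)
We argue by induction on $n$, the number of pairs in the word, peeling off the innermost pair $\la_{p_n}(\xi_{p_n})^*\la_{q_n}(\xi_{q_n})$ and keeping track of the fibre on which the restriction of $b$ is an amplified compact. Throughout we identify restrictions to fibres via Lemma~\ref{L:facreate}. That is, $\la_q(\xi_q)|_{X_r} = i^r_{q,e_P}(M^{e_P,q}_{\xi_q})$ and $\la_p(\xi_p)^*|_{X_{pr}} = i^r_{e_P,p}((M^{e_P,p}_{\xi_p})^*)$, where $M^{e_P,q}_{\xi_q}\in\K(A,X_q)$ by Lemma~\ref{L:famult}. We manipulate these amplifications with the functoriality rules of Lemma~\ref{L:faj}.

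\emph{Base case $n=1$.} Here $\Bx = q^{-1}pP$ and $g = p^{-1}q$, writing $p=p_1$, $q=q_1$. Let $r\in\Bx$, so $qr\in pP\cap qP$. By finite alignment and Remark~\ref{R:famin}, $qr\in wP$ for a minimal common right multiple $w = p y_p = q y_q$. Writing $qr = wz$ and using left cancellation, $r = y_q z$. Set $s := y_q$; then $qs = w\in pP$, so $s\in\Bx$, and $gs = p^{-1}qy_q = y_p$. On $X_s$ we have
\[
b|_{X_s} = i^{y_p}_{e_P,p}\big((M^{e_P,p}_{\xi_p})^*\big)\, i^{y_q}_{q,e_P}\big(M^{e_P,q}_{\xi_q}\big).
\]
Applying Lemma~\ref{L:facomp} with $u=q$, $v=p$, the indices $p,q$ of that lemma equal to $e_P$, and $k_1 = M^{e_P,q}_{\xi_q}$, $k_2 = (M^{e_P,p}_{\xi_p})^*$, this operator lies in $\K(X_{y_q},X_{y_p}) = \K(X_s,X_{gs})$. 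Finally, item (iii) of Lemma~\ref{L:faj} gives $b|_{X_r} = i^z_{gs,s}(b|_{X_s})$.

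\emph{Inductive step.} Write $b = b' c$ with $c = \la_{p_n}(\xi_{p_n})^*\la_{q_n}(\xi_{q_n})$, and let $r\in\Bx$. By the base case, $c|_{X_r} = i^{z_1}_{h s_1,s_1}(S_1)$ with $r = s_1 z_1$, $s_1\in q_n^{-1}p_nP$, $h = p_n^{-1}q_n$ and $S_1\in\K(X_{s_1},X_{hs_1})$. Then $hr = hs_1z_1$ lies in the ideal $\Bx' := q_{n-1}^{-1}p_{n-1}\dots q_1^{-1}p_1P$ of the shorter word $b'$, by \eqref{Eq:faintersect}. The inductive hypothesis gives $hr = s'z'$ with $s'\in\Bx'$ and $b'|_{X_{hr}} = i^{z'}_{g's',s'}(S')$ for a compact $S'$, where $g' = gh^{-1}$. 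We thus need to combine two amplified compacts,
\[
i^{z'}_{g's',s'}(S')\; i^{z_1}_{hs_1,s_1}(S_1), \qquad s'z' = hs_1 z_1,
\]
into a single one. Both $s'$ and $hs_1$ lie in $s'P\cap hs_1P$. Choose a minimal common right multiple $w = s'y' = hs_1 y_1$ with $hr\in wP$, say $hr = wz$. Item (iii) of Lemma~\ref{L:faj} lets us rewrite each factor as an amplification from $w$ (on the left) and from $s_1y_1$ (on the right), further amplified by $z$. Lemma~\ref{L:facomp}, applied with $u = hs_1$, $v = s'$, $k_1 = S_1$ and $k_2 = S'$, shows that the product at level $w$ is compact in $\K(X_{s_1y_1}, X_{g's'y'})$. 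Setting $s := s_1y_1$ we get $r = sz$ and $g's'y' = g'hs = gs$, so item (ii) of Lemma~\ref{L:faj} combines the product into $i^z_{gs,s}(S)$ with $S$ compact.

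It remains to check that $s\in\Bx$. We have $s_1y_1\in q_n^{-1}p_nP$ because that set is a right ideal, and $hs = w\in s'P\subseteq\Bx'$. Hence \eqref{Eq:faintersect} gives $s\in\Bx$.

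\emph{Main obstacle.} The delicate point is this recombination: aligning the two amplification indices, and checking that the cancellations $s'z' = wz$ and $r = s_1y_1z$ hold in $G$ by left cancellation. Once the indices match, Lemma~\ref{L:facomp} is exactly the tool that makes the product compact.
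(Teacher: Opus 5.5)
Your proposal is correct and follows essentially the same route as the paper. It proceeds by induction on $n$, peeling off the innermost pair; the base case uses a minimal common right multiple of $p_1,q_1$ together with Lemma~\ref{L:facomp} at $p=q=e_P$, and the inductive step recombines the two amplified compacts at a minimal common right multiple of $hs_1$ and $s'$ via Lemma~\ref{L:facomp}, with exactly the parameters the paper uses.

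There are two small slips. First, it is $hr$, not $s'$ and $hs_1$ themselves, that lies in $s'P\cap hs_1P$. Second, the base-case identity $b|_{X_r}=i^z_{gs,s}(b|_{X_s})$ needs item (ii) of Lemma~\ref{L:faj} as well as item (iii).
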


\begin{proof}
Given $r \in \Bx$ we shall produce $s$, $z$ and $S$, and verify the identity.
Note that once $s \in \Bx$ and $r = s z$ are given, we have $gs, gr \in P$ by Proposition~\ref{P:mult} and $gr = (gs)z$ in $G$, so that $i^z_{gs,s}$ maps $\L(X_s, X_{gs})$ into $\L(X_r, X_{gr})$, which is where $b|_{X_r}$ lives.
We induct on $n$.

\smallskip
\noindent
{\it Base case $n = 1$.}
Here $b = \la_{p_1}(\xi_{p_1})^* \la_{q_1}(\xi_{q_1})$, $g = p_1^{-1} q_1$ and $\Bx = q_1^{-1} p_1 P = \{ r \in P \mid q_1 r \in p_1 P \}$.
Fix $r \in \Bx$, so that $q_1 r \in p_1 P \cap q_1 P$.
By Remark~\ref{R:famin} we may choose a minimal common right multiple $w$ of $p_1$ and $q_1$ with $q_1 r \in wP$, and we write
\[
w = p_1 y_1 = q_1 y_2
\qand
q_1 r = w z
\text{ for } y_1, y_2, z \in P.
\]
From $q_1 r = q_1 y_2 z$ and left cancellation we get $r = y_2 z$.
Set $s := y_2$, so that $r = s z$ and $g s = p_1^{-1} q_1 y_2 = p_1^{-1} w = y_1$.
Moreover, we have $q_1 s = w \in p_1 P$ and hence $s \in \Bx$.
Set
\[
S := i^{y_1}_{e_P, p_1}
\left( \left( M^{e_P, p_1}_{\xi_{p_1}} \right)^* \right)
\, i^{y_2}_{q_1, e_P} \left( M^{e_P, q_1}_{\xi_{q_1}} \right),
\]
which lies in $\K(X_{y_2}, X_{y_1}) = \K(X_s, X_{gs})$ by Lemma~\ref{L:famult} and Lemma~\ref{L:facomp}, the latter applied with $u := q_1$, $v := p_1$, $y_u := y_2$, $y_v := y_1$ and $p = q := e_P$.
By Lemma~\ref{L:facreate} and item (iii) of Lemma~\ref{L:faj} we have
\[
\la_{q_1}(\xi_{q_1})|_{X_r}
=
i^{y_2 z}_{q_1, e_P} \left( M^{e_P,q_1}_{\xi_{q_1}} \right)
=
i^{z}_{w, y_2} \left( i^{y_2}_{q_1, e_P}
\left( M^{e_P,q_1}_{\xi_{q_1}} \right) \right),
\]
since $r = y_2 z$ and $q_1 y_2 = w$, and likewise
\[
\la_{p_1}(\xi_{p_1})^*|_{X_{q_1 r}}
=
i^{y_1 z}_{e_P, p_1}
\left( \left( M^{e_P,p_1}_{\xi_{p_1}} \right)^* \right)
=
i^{z}_{y_1, w} \left( i^{y_1}_{e_P, p_1}
\left( \left( M^{e_P,p_1}_{\xi_{p_1}} \right)^* \right) \right),
\]
since $q_1 r = w z = p_1 y_1 z$.
Composing and using item (ii) of Lemma~\ref{L:faj} we obtain
\[
b|_{X_r} = i^{z}_{y_1, y_2}(S) = i^z_{gs, s}(S).
\]

\smallskip
\noindent
{\it Inductive step.}
Let $n \geq 2$ and write $b = b' c$, where $c := \la_{p_n}(\xi_{p_n})^* \la_{q_n}(\xi_{q_n})$ and $b'$ is the word with parameters $p_1, q_1, \dots, p_{n-1}, q_{n-1}$.
Write
\[
h := p_n^{-1} q_n, \quad \By := q_n^{-1} p_n P
\qand
g' := p_1^{-1} q_1 \cdots p_{n-1}^{-1} q_{n-1}, \quad
\Bx' := q_{n-1}^{-1} p_{n-1} \dots q_1^{-1} p_1 P,
\]
for the data attached to $c$ and to $b'$, so that $g = g' h$ and
\[
\Bx = q_n^{-1} p_n \Bx' = \{ r \in \By \mid h r \in \Bx' \}.
\]
Fix $r \in \Bx$.
By the base case applied to $c$ at $r$ there are $s_1 \in \By$ and $z_1 \in P$ with $r = s_1 z_1$, and $S_1 \in \K(X_{s_1}, X_{u})$ with $u := h s_1$, such that $c|_{X_r} = i^{z_1}_{u, s_1}(S_1)$.
By the inductive hypothesis applied to $b'$ at $h r \in \Bx'$ there are $v \in \Bx'$ and $z' \in P$ with $h r = v z'$, and $S' \in \K(X_v, X_{g'v})$, such that $b'|_{X_{hr}} = i^{z'}_{g'v, v}(S')$.

Since $h r = u z_1 = v z'$ we have $h r \in u P \cap v P$, and by Remark~\ref{R:famin} we may choose a minimal common right multiple $w$ of $u$ and $v$ with $hr \in wP$.
Write $w = u y_u = v y_v$ and $h r = w z$ with $y_u, y_v, z \in P$.
From $u z_1 = u y_u z$ and left cancellation we get $z_1 = y_u z$, and likewise $z' = y_v z$.
Set
\[
s := s_1 y_u
\qand
S := i^{y_v}_{g'v, v}(S') \; i^{y_u}_{u, s_1}(S_1),
\]
so that 
\[
r = s_1 y_u z = s z
\AND 
g s = g' h s_1 y_u = g' u y_u = g' w = g' v y_v,
\]
while $S \in \K(X_{s}, X_{gs})$ by Lemma~\ref{L:facomp} applied with $p := g'v$ and $q := s_1$.
By items (iii) and (ii) of Lemma~\ref{L:faj} we obtain
\begin{align*}
b|_{X_r}
&=
b'|_{X_{hr}} \circ c|_{X_r}
=
i^{z'}_{g'v, v}(S')
i^{z_1}_{u, s_1}(S_1)\\
&=
i^{y_v z}_{g'v, v}(S')
i^{y_u z}_{u, s_1}(S_1)
=
i^{z}_{g'w, w}\left( i^{y_v}_{g'v, v}(S') \right)
i^{z}_{w, s}\left( i^{y_u}_{u, s_1}(S_1) \right)
=
i^{z}_{gs, s}(S).
\end{align*}
Finally $s = s_1 y_u \in \By$, since $s_1 \in \By$ and $\By$ is a right ideal, while $h s = u y_u = w \in v P \subseteq \Bx'$, since $v \in \Bx'$ and $\Bx'$ is a right ideal.
Hence $s \in \Bx$ and the induction is complete.
\end{proof}

In order to pass from Proposition~\ref{P:fanormal} to the $\bo{K}$-cores we make use of Lemma~\ref{L:faunits}.

\begin{corollary} \label{C:facompact}
Let $P$ be a unital finitely aligned subsemigroup of a discrete group $G$ and let $X$ be a compactly aligned saturated product system over $P$.
Let $\mt \neq \Bx \in \J$ with irredundant generating family $\{s_1, \dots, s_m\}$.
Then
\[
\ev_{s_i} \left( \bo{K}_{\Bx, \la_\ast} \right) \subseteq \K(X_{s_i})
\foral i = 1, \dots, m.
\]
\end{corollary}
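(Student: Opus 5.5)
By linearity and continuity of the $*$-homomorphism $\ev_{s_i}$ on $\bo{K}_{\Bx,\la_\ast}$ (item (i) of Lemma~\ref{L:fadiag}), and since $\K(X_{s_i})$ is norm closed, it suffices to treat the spanning words. By Proposition~\ref{P:fanoeps} (saturation gives non-degeneracy) these are the products $b = \la_{p_1}(\xi_{p_1})^* \la_{q_1}(\xi_{q_1}) \cdots \la_{p_n}(\xi_{p_n})^* \la_{q_n}(\xi_{q_n})$ with $g = e_P$ and $q_n^{-1} p_n \dots q_1^{-1} p_1 P = \Bx$. For such a word, $\ev_{s_i}(b) = b|_{X_{s_i}}$ because $X_{s_i}$ is reducing.

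The main input is Proposition~\ref{P:fanormal} applied at $r := s_i \in \Bx$. It provides $s \in \Bx$ and $z \in P$ with $s_i = s z$, together with $S \in \K(X_s, X_{gs}) = \K(X_s)$ (as $g = e_P$), such that $b|_{X_{s_i}} = i^z_{s,s}(S) = i_s^{s_i}(S)$. Amplifying a compact operator does not in general give a compact one, so we must show that $z$ is invertible. Here irredundancy enters. Since $s \in \Bx$ and $s_i = sz \in sP$, Lemma~\ref{L:famax} (applied with $q := s$ and the index $i$) gives $sP = s_i P$ and $s_i = s x$ for a unique $x \in P^*$. Left cancellation in $G$ from $s z = s_i = s x$ yields $z = x \in P^*$.

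Finally, Lemma~\ref{L:faunits}(ii) with $p = q = s$ and the unit $z$ gives $i^z_{s,s}(\K(X_s)) \subseteq \K(X_{sz}) = \K(X_{s_i})$. Hence $\ev_{s_i}(b) \in \K(X_{s_i})$ for every spanning word, and the claim follows by linearity and closure.

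The main difficulty is the invertibility of $z$. It depends on Proposition~\ref{P:fanormal} placing the intermediate index $s$ inside $\Bx$, which is exactly what makes Lemma~\ref{L:famax} applicable. Everything else is bookkeeping.
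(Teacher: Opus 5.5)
Your proposal is correct and follows the paper's proof essentially step for step. You reduce to the spanning words via Proposition~\ref{P:fanoeps}, apply Proposition~\ref{P:fanormal} at $s_i$, and use Lemma~\ref{L:famax} to get $z \in P^*$ (your explicit left-cancellation step $sz = sx \Rightarrow z = x$ is left implicit in the paper). You then conclude with item (ii) of Lemma~\ref{L:faunits}, contractivity of $\ev_{s_i}$, and closedness of $\K(X_{s_i})$.
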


\begin{proof}
Fix $i \in \{1, \dots, m\}$.
By Proposition~\ref{P:fanoeps} it suffices to consider an element $b\in \bo{K}_{\Bx,\la_\ast}$ of the form 
\[
b:=\la_{p_1}(\xi_{p_1})^*\la_{q_1}(\xi_{q_1}) \cdots \la_{p_n}(\xi_{p_n})^*\la_{q_n}(\xi_{q_n}) \text{ where } n\in \bN, p_j,q_j \in P \foral j=1,\dots,n,
\]
such that $p_1^{-1} q_1\cdots p_n^{-1} q_n= e_P$ and $q_n^{-1}p_n\dots q_1^{-1}p_1P=\Bx$, since the linear span of such elements is dense in $\bo{K}_{\Bx, \la_\ast}$.
Proposition~\ref{P:fanormal} applied at $s_i \in \Bx$ provides $s \in \Bx$ and $z \in P$ with $s_i = s z$, and $S \in \K(X_s)$, such that $\ev_{s_i}(b) = i^z_{s,s}(S)$.
Since $s_i \in s P$ and $s \in \Bx$, Lemma~\ref{L:famax} gives $z \in P^*$, and therefore
\[
\ev_{s_i}(b) = i^z_{s,s}(S) \in \K(X_{sz}) = \K(X_{s_i}),
\]
by item (ii) of Lemma~\ref{L:faunits}.
The claim follows, since $\ev_{s_i}$ is contractive and $\K(X_{s_i})$ is closed.
\end{proof}

\begin{remark} \label{R:faneed}
Compact alignment is used in an essential way in Proposition~\ref{P:fanormal}, as the conclusion of Corollary~\ref{C:facompact} fails without this assumption.

For an example, consider the finitely aligned semigroup $P = \{0, 2, 3, 4, \dots\} \subseteq \bZ$ and let $H$ be an infinite-dimensional Hilbert space.
We may regard $Y:=H$ as a non-degenerate C*-correspondence over $A:=\bC$.
Setting $X_p := Y^{\otimes p}$ for $p \in P$ defines a saturated product system over $P$ with connecting maps $u_{p,q}$ given by the associativity of the stabilised tensor product.
Take 
\[
\Bx := (2+P) \cap (3+P) = \{5,6,7,\dots\},
\]
so that $\{5,6\}$ is an irredundant generating family for $\Bx$ by Example~\ref{Ex:fa}, and consider
\[
b
:=
\la^{(2)}(\theta^{X_2}_{\xi, \eta}) \,
\la^{(3)}(\theta^{X_3}_{\ze, \om})
\in \bo{K}_{\Bx, \la_\ast}
\textup{ for }
\xi, \eta \in X_2, \ \ze, \om \in X_3.
\]
Since $5 = 2 + 3 = 3 + 2$, Lemma~\ref{L:fadiag} and Lemma~\ref{L:facptfibre} give
\[
\ev_5(b) =\ev_5(\la^{(2)}(\theta^{X_2}_{\xi, \eta})) \ev_5(\la^{(3)}(\theta^{X_3}_{\ze, \om}))= i_2^5 \left( \theta^{X_2}_{\xi,\eta} \right) \, i_3^5 \left( \theta^{X_3}_{\ze,\om} \right).
\]
Consider $\ze = \ze_1 \otimes \ze_2$ with $\ze_1 \in Y^{\otimes 2}$ and $\ze_2 \in Y$, and set $\mu := \xi \sca{\eta, \ze_1} \otimes \ze_2 \in X_3$.
We will show that
\begin{equation}\label{Eq:comp5}
\ev_5(b) \nu = i_3^5 \left( \theta^{X_3}_{\mu, \om} \right) \nu
\foral \nu \in X_5.
\end{equation}
For $\mu \neq 0$ (for example by choosing $\xi=\eta=\ze_1 \neq 0$ and $\ze_2 \neq 0$) and $\om \neq 0$, we will get 
\[
\ev_5(b) 
= 
i_3^5 \left( \theta^{X_3}_{\mu, \om} \right)
=
\theta^{X_3}_{\mu, \om} \otimes \id_{Y^{\otimes 2}}
\notin \K(X_5),
\]
as $Y^{\otimes 2}$ is infinite-dimensional.
Hence the conclusion of Corollary~\ref{C:facompact} fails for this product system.
Moreover we will get that $X$ is not compactly aligned since
\[
i_2^5(\K(X_2)) \, i_3^5(\K(X_3)) \not\subseteq \K(X_5).
\]

For \eqref{Eq:comp5}, it suffices to consider $\nu = \nu_1 \otimes \nu_2$ with $\nu_1 \in X_3$ and $\nu_2 \in X_2$.
In this case we have
\begin{align*}
\ev_5(b) \nu
& =
i_2^5 \left( \theta^{X_2}_{\xi,\eta} \right) \, i_3^5 \left( \theta^{X_3}_{\ze,\om} \right) (\nu_1 \otimes \nu_2)
=
i_2^5 \left( \theta^{X_2}_{\xi,\eta} \right) \left(\ze_1 \otimes (\ze_2 \sca{\om,\nu_1}) \otimes \nu_2\right) \\
& =
(\xi \sca{\eta, \ze_1}) \otimes (\ze_2 \sca{\om,\nu_1}) \otimes \nu_2
=
(\mu \sca{\om, \nu_1}) \otimes \nu_2
=
i_3^5 \left( \theta^{X_3}_{\mu, \om} \right) \nu,
\end{align*}
as required.
\end{remark}

\subsection{Exactness}

We now provide the relevant results on exactness.

\begin{proposition} \label{P:famain}
Let $P$ be a unital finitely aligned subsemigroup of a discrete group $G$ and let $X$ be a compactly aligned saturated product system over $P$ with coefficient C*-algebra $A$.
If $A$ is exact, then $\bo{K}_{\Bx, \la_\ast}$ is exact for every $\Bx \in \J$.
\end{proposition}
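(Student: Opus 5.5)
The plan is to embed each $\bo{K}$-core into a finite direct sum of compact-operator algebras and then use that exactness passes to subalgebras. The case $\Bx=\mt$ is immediate, since $\bo{K}_{\mt,\la_\ast}=(0)$ by \eqref{Eq:faKempty}. So fix $\mt \neq \Bx \in \J$.

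First, Proposition~\ref{P:fajfg} shows that $\Bx$ is finitely generated. Discarding redundant elements, we fix an irredundant generating family $\{s_1,\dots,s_m\}$. Proposition~\ref{P:fafibres} then gives an injective $*$-homomorphism
\[
\Phi_\Bx \colon \bo{K}_{\Bx,\la_\ast} \to \bigoplus_{i=1}^m \L(X_{s_i}).
\]
By Corollary~\ref{C:facompact}, which is where compact alignment is used, each component $\ev_{s_i}$ maps $\bo{K}_{\Bx,\la_\ast}$ into $\K(X_{s_i})$. Hence $\bo{K}_{\Bx,\la_\ast}$ is $*$-isomorphic to a C*-subalgebra of $\bigoplus_{i=1}^m \K(X_{s_i})$.

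It then remains to show that each $\K(X_p)$ is exact when $A$ is exact, because exactness is preserved by finite direct sums and passes to C*-subalgebras. For this we use the standard linking-algebra argument from \cite{Kat04}. The algebra $\K(X_p)$ is Morita equivalent to the ideal $I_p := [\sca{X_p,X_p}] \lhd A$, which is exact as an ideal of an exact C*-algebra. Concretely, $\K(X_p)$ is a full corner of the linking algebra
\[
L(X_p) = \begin{pmatrix} \K(X_p) & X_p \\ X_p^* & I_p \end{pmatrix},
\]
and $I_p$ is the complementary full corner. We would argue exactness of $L(X_p)$ in one of two ways. One route is that $L(X_p)$ embeds in $\K(X_p\oplus I_p)$, and Brown's theorem gives a stable isomorphism $L(X_p)\otimes\K \simeq I_p\otimes \K$ in the $\sigma$-unital case. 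The cleaner route, avoiding separability, is that exactness is preserved under Morita equivalence; this is standard, e.g.\ it follows because $\K(X_p)$ sits as a hereditary subalgebra of $L(X_p)$, and $L(X_p)$ is the closed span of products through the full corner $I_p$. Alternatively, we could invoke \cite[Proposition B.8]{Kat04}-type arguments: an exact faithful $*$-representation of $A$ induces a nuclear faithful representation of $\K(X_p)$. Once $L(X_p)$ is exact, its corner $\K(X_p)$ is exact.

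The main obstacle is this exactness transfer from $A$ to $\K(X_p)$ in full generality, without separability. My first choice is the induced-representation route. Take a faithful nuclear representation $\pi$ of $A$ and form $X_p\otimes_\pi H$. Then $\K(X_p)$ is represented faithfully on $X_p\otimes_\pi H$, and the nuclear maps factoring through matrix algebras can be transported using finitely many vectors of $X_p$. If this gets technical, I would fall back on citing that exactness is invariant under Morita equivalence (equivalently, hereditary subalgebras and the full-corner linking algebra), as in \cite{Kat04}.
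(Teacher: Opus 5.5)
Your proposal is correct and follows the paper's proof essentially verbatim: the empty case via \eqref{Eq:faKempty}, then Proposition~\ref{P:fajfg}, the injective $*$-homomorphism $\Phi_\Bx$ of Proposition~\ref{P:fafibres}, Corollary~\ref{C:facompact} to land in $\bigoplus_{i=1}^m \K(X_{s_i})$, and permanence of exactness under finite direct sums and C*-subalgebras. The only difference is that the paper simply cites \cite[Proposition B.7]{Kat04} for exactness of each $\K(X_{s_i})$, so that citation can replace your linking-algebra/Morita discussion, whose ``hereditary subalgebra'' justification is its least precise part.
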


\begin{proof}
For $\Bx = \mt$ we have $\bo{K}_{\mt, \la_\ast} = (0)$ by \eqref{Eq:faKempty}, and there is nothing to prove.
So consider $\mt \neq \Bx \in \J$.
By Proposition~\ref{P:fajfg} we may fix an irredundant generating family $\{p_1, \dots, p_m\}$ for $\Bx$.
By Proposition~\ref{P:fafibres} the map 
\[
\Phi_{\Bx} \colon \bo{K}_{\Bx, \la_\ast} \to \bigoplus_{i=1}^m \L(X_{p_i});
\
b \mapsto \left( \ev_{p_1}(b), \dots, \ev_{p_m}(b) \right),
\]
is an injective $*$-homomorphism, and by Corollary~\ref{C:facompact} we obtain that its range is actually contained in $\bigoplus_{i=1}^m \K(X_{p_i})$.
Hence $\bo{K}_{\Bx, \la_\ast}$ is $*$-isomorphic to a C*-subalgebra of $\bigoplus_{i=1}^m \K(X_{p_i})$.
Each $\K(X_{p_i})$ is exact by \cite[Proposition B.7]{Kat04}, a finite direct sum of exact C*-algebras is exact, and exactness passes to C*-subalgebras.
Therefore $\bo{K}_{\Bx, \la_\ast}$ is exact.
\end{proof}

\begin{theorem} \label{T:faexact}
Let $P$ be a unital finitely aligned subsemigroup of an exact discrete group $G$ and let $X$ be a compactly aligned saturated product system over $P$ with coefficient C*-algebra $A$.
Then the following are equivalent:
\begin{enumerate}
\item $A$ is exact.
\item $\T_\la(X)$ is exact.
\end{enumerate}
If in addition $G$ is amenable, then any of the items above is equivalent to the following:
\begin{enumerate}[resume]
\item $\ca(t)$ is exact for an injective Fock covariant representation $t$ of $X$.
\end{enumerate}
\end{theorem}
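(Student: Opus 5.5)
The plan is to go through the earlier reductions and prove the equivalences in turn.

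For [(i) $\Rightarrow$ (ii)], Proposition~\ref{P:famain} shows that $\bo{K}_{\Bx,\la_\ast}$ is exact for every $\Bx \in \J$ whenever $A$ is exact. Since $G$ is exact, the implication [(ii) $\Rightarrow$ (i)] of Corollary~\ref{C:Excores-3} then gives that $\T_\la(X)$ is exact. For [(ii) $\Rightarrow$ (i)] we note that $A = X_{e_P}$ sits inside $\T_\la(X)$ as $\la_{e_P}(A)$. The Fock representation is injective, so $A$ is $*$-isomorphic to a C*-subalgebra of $\T_\la(X)$. Equivalently, $\la_{e_P}(A) \subseteq \bo{K}_{P,\la_\ast}$, since $t_{e_P}(a) = t_{e_P}(a_1)^* t_{e_P}(a_2)$ by Cohen factorisation. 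Exactness passes to C*-subalgebras, so $A$ is exact.

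Now assume that $G$ is amenable. Then Corollary~\ref{C:Excores-2} says that $\T_\la(X)$ is exact if and only if $\ca(t)$ is exact for some, and hence every, injective Fock covariant representation $t$ of $X$. This gives [(ii) $\Leftrightarrow$ (iii)] directly. If one prefers to argue through the cores: for (iii) $\Rightarrow$ (i), the injectivity of $t$ gives $A \simeq t_{e_P}(A) \subseteq \ca(t)$, so $A$ is exact as a subalgebra. For (i) $\Rightarrow$ (iii), Proposition~\ref{P:Kcoresiso} identifies each $\bo{K}_{\Bx,t_\ast}$ with $\bo{K}_{\Bx,\la_\ast}$ through $\T_{\cov}^{\fock}(X)\simeq \T_\la(X)$, so all of them are exact. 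The issue is that Corollary~\ref{C:Excores-1} requires $t$ to be equivariant. To avoid this, I would use that $\ca(t)$ is a quotient of $\T_\la(X)$ by the universal property under amenability, and exactness passes to quotients.

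The only point needing care is that (iii) as stated involves a single, not necessarily equivariant, representation. The quotient argument (ii) $\Rightarrow$ (iii) and the subalgebra argument (iii) $\Rightarrow$ (i) together settle this without any equivariance hypothesis, so no real obstacle remains. The substantive work was already done in Proposition~\ref{P:famain}, through Corollary~\ref{C:facompact}.
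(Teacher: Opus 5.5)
Your proposal is correct and follows the paper's proof. It proves (i)$\Rightarrow$(ii) via Proposition~\ref{P:famain} and Corollary~\ref{C:Excores-3}, (ii)$\Rightarrow$(i) by injectivity of $\la$, and (ii)$\Leftrightarrow$(iii) via Corollary~\ref{C:Excores-2}; your supplementary direct argument for (ii)$\Leftrightarrow$(iii), using $\ca(t)$ as a quotient of $\T_\la(X)$ in one direction and $t_{e_P}(A) \subseteq \ca(t)$ in the other, is also valid and sidesteps any equivariance concerns.
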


\begin{proof}
\noindent
[(i) $\Rightarrow$ (ii)].
Proposition~\ref{P:famain} implies that $\bo{K}_{\Bx, \la_\ast}$ is exact for every $\Bx \in \J$ and hence $\T_\la(X)$ is exact by applying Corollary~\ref{C:Excores-3}.

\smallskip
\noindent
[(ii) $\Rightarrow$ (i)].
This is immediate as $\la$ is injective and exactness passes to C*-subalgebras.

\smallskip
\noindent
[(ii) $\Leftrightarrow$ (iii)].
This is Corollary~\ref{C:Excores-2}.
\end{proof}

\subsection{Compactly aligned product systems over strongly finitely aligned semigroups}

We next consider the question of nuclearity.
This will apply to a subclass of finitely aligned semigroups considered also in \cite{ES18}.

\begin{definition}
A unital subsemigroup $P$ of a discrete group $G$ is called \emph{strongly finitely aligned} if for every $p,q \in P$ the constructible ideal $pP\cap qP$ can be written as a disjoint union of finitely many principal right ideals of $P$.
\end{definition}

In particular a strongly finitely aligned semigroup is finitely aligned.
The next proposition shows that, as in the case of finite alignment, this stronger condition of disjoint unions passes to the constructible ideals.

\begin{proposition} \label{P:sfa}
Let $P$ be a unital subsemigroup of a discrete group $G$.
Then the following are equivalent:
\begin{enumerate}
\item $P$ is strongly finitely aligned.
\item Every $\Bx\in \J$ is a disjoint union of finitely many principal right ideals.
\end{enumerate}
\end{proposition}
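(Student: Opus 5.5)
The implication [(ii) $\Rightarrow$ (i)] is immediate, since $pP \cap qP \in \J$ for all $p,q \in P$. For [(i) $\Rightarrow$ (ii)] we follow the scheme of Proposition~\ref{P:fajfg}. Write $\J_{\rm d}$ for the family of right ideals of $P$ that are disjoint unions of finitely many principal right ideals, with $\mt$ the empty union. Clearly $P = e_P P$ and $\mt$ lie in $\J_{\rm d}$. By minimality of $\J$ it then suffices to show that $\J_{\rm d}$ is closed under $\Bx \mapsto p\Bx$, under $\Bx \mapsto p^{-1}\Bx$, and under finite intersections.

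Left multiplication is easy. If $\Bx = \bigsqcup_i r_i P$, then $p\Bx = \bigcup_i pr_iP$. Left cancellation in $G$ gives $pr_iP \cap pr_jP = p(r_iP \cap r_jP) = \mt$ for $i \neq j$, so the union is still disjoint.

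Pre-images are the next step. Let $\Bx = \bigsqcup_i r_i P$. Then $p^{-1}\Bx = \bigcup_i p^{-1}(r_iP)$, and these pieces are pairwise disjoint: if $px \in r_iP \cap r_jP$ then $i=j$. So it suffices to treat $\Bx = rP$. By strong finite alignment write $pP \cap rP = \bigsqcup_j w_j P$ and set $w_j = pu_j$. Exactly as in the proof of Proposition~\ref{P:fajfg} we get $p^{-1}(rP) = \bigcup_j u_jP$. This union is disjoint, because $u_jP \cap u_kP \neq \mt$ would give $pu_jP \cap pu_kP \neq \mt$, which forces $j = k$.

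Finite intersections need the most care. Take $\Bx = \bigsqcup_i r_iP$ and $\By = \bigsqcup_j s_jP$. Then $\Bx \cap \By = \bigcup_{i,j} (r_iP \cap s_jP)$, and each $r_iP \cap s_jP$ is a disjoint finite union $\bigsqcup_k w_{ijk}P$ by hypothesis. Pieces with different index pairs $(i,j) \neq (i',j')$ are disjoint, because $w_{ijk}P \subseteq r_iP \cap s_jP$ and either $r_iP \cap r_{i'}P = \mt$ or $s_jP \cap s_{j'}P = \mt$. So the whole family $\{w_{ijk}P\}$ is pairwise disjoint, and $\Bx \cap \By \in \J_{\rm d}$.

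The main point to watch is this last disjointness bookkeeping, which is simple. No appeal to Lemma~\ref{L:fairr} is needed, because we always exhibit one explicit disjoint decomposition. Lemma~\ref{L:fairr} would only be needed to say that every irredundant generating family of an element of $\J$ is then automatically disjoint, which we may record as a consequence.
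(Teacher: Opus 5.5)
Your proof is correct, and it takes a different route from the paper's.

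The paper does not verify closure properties. It invokes the reduced form $\Bx = q_n^{-1}p_n \dots q_1^{-1}p_1 P$ from \cite[Lemma 3.3]{Li12} and inducts on $n$.
\begin{itemize}
\item The base case $q_1^{-1}p_1P = q_1^{-1}(p_1P \cap q_1P)$ is exactly your pre-image step for a principal ideal.
\item The inductive step $\Bx_k = q_k^{-1}(p_k \Bx_{k-1})$ needs only two facts: left multiplication preserves disjoint decompositions (by left cancellation), and pre-images of disjoint sets are disjoint.
\end{itemize}
Because the reduced form already absorbs the finite intersections, the paper never checks closure under $\cap$. Its argument is therefore shorter, at the price of relying on Li's lemma.

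You instead check all three closure properties directly against the minimality definition of $\J$, mirroring Proposition~\ref{P:fajfg}. This costs you the extra bookkeeping for $\Bx \cap \By$, where strong finite alignment is used a second time on each $r_iP \cap s_jP$. Your handling of that step is correct: pieces from distinct index pairs are disjoint because either $r_iP \cap r_{i'}P = \mt$ or $s_jP \cap s_{j'}P = \mt$. In return, your argument is self-contained, independent of the reduced form, and treats the finitely aligned and strongly finitely aligned cases uniformly.
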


\begin{proof}
\noindent
[(ii) $\Rightarrow$ (i)].
This is immediate, since $pP \cap qP \in \J$.

\smallskip
\noindent
[(i) $\Rightarrow$ (ii)].
If $\Bx=\mt$ the conclusion holds trivially, so fix a non-empty $\Bx \in \J$.
In this case $\Bx=q_n^{-1}p_n \dots q_1^{-1}p_1 P$ for some $n \geq 1$ and $p_i, q_i \in P$.
We proceed by induction on $n \geq 1$.
If $n=1$, then $\Bx=q_1^{-1}p_1 P=q_1^{-1}(p_1P \cap q_1P)$.
Since $\Bx \neq \mt$ we get that $p_1P \cap q_1P \neq \mt$ and hence item (i) implies that
\[
p_1P \cap q_1P = \bigcup_{j=1}^{m} s_jP \text{ for some } m \in \bN, s_1, \dots, s_m \in P, \textup{ such that } s_i P \cap s_j P = \mt \text{ for } i \neq j.
\]
Since $P$ is unital we have $s_j \in q_1P$, say $s_j = q_1 t_j$ with $t_j \in P$.
It follows that
\[
t_iP \cap t_jP = q_1^{-1}(s_iP \cap s_j P) = \mt, \textup{ when } i \neq j,
\]
and moreover we get
\[
q_1^{-1}(p_1P) = q_1^{-1}(p_1P \cap q_1 P) = \bigcup_{j=1}^{m} t_jP.
\]
Now set $\Bx_0 := P$ and $\Bx_k := q_k^{-1}p_k \Bx_{k-1}$ for $k = 1, \dots, n$, so that $\Bx = \Bx_n$.
Suppose that
$\Bx_{k-1} = \bigcup_{i=1}^{m} r_iP$ for some $r_1, \dots, r_m \in P$ where the union is disjoint.
Left cancellation gives $p_k\Bx_{k-1} = \bigcup_{i=1}^{m} p_kr_iP$.
Since preimages of disjoint sets are disjoint, we obtain
\[
\Bx_k = q_k^{-1}(p_k\Bx_{k-1}) = \bigcup_{i=1}^{m} q_k^{-1}(p_kr_iP),
\]
where the union is disjoint.
The case $n=1$ applied to $q_k$ and $p_kr_i$ yields that each $q_k^{-1}(p_kr_iP)$ is a disjoint union of finitely many principal right ideals.
Hence so is $\Bx_k$, which completes the induction.
\end{proof}

\begin{remark}
If $P$ is a semigroup such that any two principal right ideals of $P$ intersect, then a disjoint decomposition of $\mt \neq \Bx \in \J$ into principal right ideals must consist of a single ideal.
Therefore in this case $P$ is strongly finitely aligned if and only if it is a right LCM semigroup.
This occurs for example when $P$ is commutative, or right Ore.

Another example is provided by semigroups with an independent set of constructible ideals.
Recall that $\J$ is called \emph{independent} if $\Bx = \bigcup_{i=1}^n \Bx_i$ with $\Bx, \Bx_1, \dots, \Bx_n \in \J$ implies $\Bx = \Bx_i$ for some $i$, see \cite{Li12}.
If $P$ is strongly finitely aligned and $\J$ is independent, then $P$ is again a right LCM semigroup.
Indeed, writing $\Bx \in \J$ as a disjoint union $\bigcup_{i=1}^n p_i P$ with $n \geq 2$ would give $\Bx = p_i P$ for some $i$ by independence, contradicting the disjointness (as $p_j \in \Bx \setminus p_i P$ for $j \neq i$).
Hence every $\Bx \in \J$ is empty or principal.
\end{remark}

\begin{example}\label{E:BS}
For an example of a strongly finitely aligned semigroup that is not a right LCM semigroup and is embeddable in an amenable discrete group consider first the Baumslag--Solitar group $\mathrm{BS}(1,2)$.
This group has an affine realisation generated by
\[
a(s)=2s
\qand
b(s)=s+1,
\]
from which it follows that $\mathrm{BS}(1,2)\simeq \bZ[1/2]\rtimes\bZ$, and thus it is amenable.

Moreover, it is well known that the semigroup homomorphism
\[
\rho \colon \bF_2^+ \to \mathrm{BS}(1,2); \ x \mapsto a, y \mapsto ba,
\]
is injective.
Indeed, set $t_0 := \rho(x) = a$ and $t_1 := \rho(y) = ba$, so that $t_0(s) = 2s$ and $t_1(s) = 2s+1$, and note that $\rho(w_1 \cdots w_n) = \rho(w_1) \circ \cdots \circ \rho(w_n)$ for $w_i \in \{x,y\}$.
For $\eps_1,\ldots,\eps_n\in\{0,1\}$, a product of these elements in the affine realisation has the form
\[
t_{\eps_1}\cdots t_{\eps_n}(s)
=2^ns+k,
\]
where $k = \sum_{i=1}^{n} \eps_i 2^{i-1}$, as follows by induction on $n$.
Since $0 \leq k < 2^n$, the uniqueness of the binary expansion shows that the affine map $s \mapsto 2^n s + k$ determines both $n$ and the word $\eps_1\cdots\eps_n$.
Hence $\bF_2^+ \hookrightarrow \mathrm{BS}(1,2)$.

It remains to identify a strongly finitely aligned subsemigroup of $\bF_2^+$ that is not a right LCM semigroup.
Towards this end, let $P$ be the unital subsemigroup of $\bF_2^+$ generated by $\{x, xy, yx, yyx\}$.
We record an observation that we will use repeatedly.
The generating set is not a prefix code, as $x$ is a prefix of $xy$, and so factorisations in $P$ need not be unique.
However, if a non-empty $w \in P$ begins with $y$, then the leading generator of any factorisation of $w$ is $yx$ or $yyx$, and it is determined by the second letter of $w$; consequently
\begin{equation} \label{Eq:BSlead}
w \in P \text{ begins with } y
\quad \Longrightarrow \quad
w \in yxP \cup yyxP,
\end{equation}
and no element of $P$ begins with $yyy$.
In particular, since $yx \in P$, for every $u \in \bF_2^+$ we have
\begin{equation} \label{Eq:BScancel}
yxu \in P
\quad \Longleftrightarrow \quad
u \in P,
\end{equation}
where the forward implication follows from \eqref{Eq:BSlead} and left cancellation.

First we see that $P$ is finitely aligned.
For $w \in \bF_2^+$ set
\[
P_w := \{ v \in P \mid w v \in P \}.
\]
Let $p,q\in P$.
We say that $p$ and $q$ are \emph{prefix-comparable} if there is a $w \in \bF_2^+$ such that $q=pw$ or $p=qw$.
Since $P$ is contained in the free semigroup $\bF_2^+$, if $p$ and $q$ are prefix-incomparable, then no word can have both $p$ and $q$ as prefixes, and hence $pP\cap qP=\mt$.
Suppose that $p$ and $q$ are prefix-comparable and, without loss of generality, that $q = pw$ for the unique $w \in \bF_2^+$ with this property.
An element of $qP$ has the form $qv = pwv$ with $v \in P$, and it lies in $pP$ if and only if $wv \in P$.
Therefore
\begin{equation} \label{Eq:BSint}
pP \cap qP = q P_w,
\end{equation}
so that $pP \cap qP$ is determined by $q$ and by the word $w$ alone.
We claim that
\[
P_w = P
\quad \OR \quad
P_w = \mt
\quad \OR \quad
P_w = xP \cup yxP.
\]
In particular, the conclusion does not depend on the factorisations of $p$ and $q$ in $P$, which enter only in locating $w$.
Indeed, fix a factorisation $q = g_1 \cdots g_n$ for some $n \in \bN$ and $g_i \in \{x, xy, yx, yyx\}$.
We have the following cases.

\smallskip
\noindent
{\it Case 1.}
If $p = g_1 \cdots g_m$ for some $m \leq n$, then $w = g_{m+1} \cdots g_n \in P$ and thus $P_w = P$.

\smallskip
\noindent
{\it Case 2.} Otherwise $p = g_1 \cdots g_m \al$ for some $m < n$, where $g_{m+1} = \al \be$ with $\al, \be \in \bF_2^+$ non-empty, and $w = \be g_{m+2} \cdots g_n$.
In particular, we get that $\be \in \{x, y, yx\}$.

\begin{list}{}{%
  \setlength{\leftmargin}{.5em}%
  \setlength{\itemindent}{0pt}%
  \setlength{\listparindent}{0pt}%
  \setlength{\parsep}{\smallskipamount}%
  \setlength{\itemsep}{\smallskipamount}%
  \setlength{\topsep}{\smallskipamount}}
\item[--] {\it Case 2a.}
If $\be \in \{x, yx\}$, then $w \in P$ and thus $P_w = P$.

\item[--] {\it Case 2b.}
If $\be = y$, then $w = y z_1 \cdots z_\ell$, for $\ell := n-m-1$ and $z_i := g_{m+1+i}$.
We show by induction on $\ell \geq 0$ that $P_{y z_1 \cdots z_\ell}$ is one of $P$, $\mt$ or $xP \cup yxP$, for all $z_1, \dots, z_\ell \in \{x, xy, yx, yyx\}$.

For $\ell = 0$ we have $w = y$.
If $v \in P_y$, then $yv \in P$ begins with $y$, and so $yv \in yxP \cup yyxP$ by \eqref{Eq:BSlead}; cancelling $y$ gives $v \in xP \cup yxP$.
Conversely $y \cdot xP = yxP \subseteq P$ and $y \cdot yxP = yyxP \subseteq P$.
Hence $P_y = xP \cup yxP$.

For $\ell \geq 1$ we distinguish according to $z_1$.
If $z_1 \in \{x, yx\}$, then $y z_1 \in \{yx, yyx\}$ is a generator, so $w = (y z_1) z_2 \cdots z_\ell \in P$ and thus $P_w = P$.
If $z_1 = yyx$, then $w$ begins with $yyy$, and so does $wv$ for every $v \in P$; since no element of $P$ begins with $yyy$, we get $P_w = \mt$.
If $z_1 = xy$, then $w = yx \cdot w'$ for $w' := y z_2 \cdots z_\ell$, and \eqref{Eq:BScancel} gives
\[
v \in P_w
\iff
w' v \in P
\iff
v \in P_{w'},
\]
that is $P_w = P_{w'}$, and the inductive hypothesis applies to $w'$.
\end{list}

For strong finite alignment, it remains only to check that the unions $qxP \cup qyxP$ are disjoint.
Indeed, if $qxu=qyxv$ for some $u,v\in P$, cancellation of $q$ in the free semigroup gives $xu=yxv$, which is impossible because the word on the left begins with $x$ and the word on the right begins with $y$.
Thus $qxP\cap qyxP=\mt$, as required.

In order to see that $P$ is not a right LCM semigroup, we apply the above to $p := x$ and $q := xy$, so that $w = y$ and
\[
xP\cap xyP
=
xyxP \cup xyyxP.
\]
This intersection is not principal.
Indeed, if it equalled $zP$ for some $z \in P$, then $z \in zP$ would give $z \in xyxP \cup xyyxP$ and hence $|z| \geq 3$, while $xyx, xyyx \in zP$ would force $z$ to be a prefix of both $xyx$ and $xyyx$, whose longest common prefix is $xy$; thus $|z| \leq 2$, a contradiction.
Hence the semigroup $P$ is not a right LCM semigroup, as required.
\end{example}

\begin{proposition} \label{P:facore}
Let $P$ be a unital finitely aligned subsemigroup of a discrete group $G$ and let $X$ be a compactly aligned saturated product system over $P$.
Let $\mt \neq \Bx \in \J$ and suppose that $\Bx = \bigcup_{i=1}^m p_i P$ with $p_i P \cap p_j P = \mt$ for all $i \neq j$.
Then
\[
\bo{K}_{\Bx, \la_\ast}
\subseteq
\sum_{i=1}^m \la^{(p_i)}(\K(X_{p_i})).
\]
If in addition $t$ is a Fock covariant representation of $X$, then
\[
\bo{K}_{\Bx, t_\ast}
\subseteq
\sum_{i=1}^m t^{(p_i)}(\K(X_{p_i})).
\]
\end{proposition}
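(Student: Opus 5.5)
Fix $b \in \bo{K}_{\Bx, \la_\ast}$ and set $k_i := \ev_{p_i}(b)$ for $i = 1, \dots, m$. The plan is to show that $b$ equals the sum $\sum_{i=1}^m \la^{(p_i)}(k_i)$, and then to transport this identity from $\la$ to $t$.

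Since disjoint generators are irredundant, Corollary~\ref{C:facompact} gives $k_i \in \K(X_{p_i})$. Put
\[
c := \sum_{i=1}^m \la^{(p_i)}(k_i),
\]
and compare $b$ and $c$ fibrewise on $\F X$. By Lemma~\ref{L:fadiag}, $b$ annihilates $X_r$ for $r \notin \Bx$ and leaves $X_r$ invariant for $r \in \Bx$. By Lemma~\ref{L:facptfibre}, each $\la^{(p_i)}(k_i)$ annihilates $X_r$ for $r \notin p_iP$ and satisfies $\ev_{p_i q}(\la^{(p_i)}(k_i)) = i_{p_i}^{p_iq}(k_i)$. Now take $r \in \Bx$. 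Disjointness means $r = p_i q$ for exactly one $i$, and $q$ is unique by left cancellation. On $X_r$ only the $i$-th summand of $c$ survives, and it acts as $i_{p_i}^{r}(k_i)$. By Lemma~\ref{L:fadiag}(ii) this equals $\ev_r(b)$. For $r \notin \Bx$ both $b$ and $c$ vanish on $X_r$, because every $p_iP$ is contained in $\Bx$. Both operators are diagonal with respect to the decomposition of $\F X$, so $b = c$. This proves the first inclusion. The disjointness hypothesis is used exactly here: without it, a fibre in $p_iP \cap p_jP$ would be counted twice.

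For the Fock covariant representation $t$, let $\pi \colon \T_{\cov}^{\fock}(X) \to \ca(t)$ be the canonical map and let $\la' \colon \T_{\cov}^{\fock}(X) \to \T_\la(X)$ be the canonical map. Both are equivariant, and both restrict to the cores: $\bo{K}_{\Bx}$ is sent to $\bo{K}_{\Bx}$, and $t^{(p)}(\K(X_p))$ corresponds to $\la^{(p)}(\K(X_p))$. The idea is to lift the identity $b = \sum_i \la^{(p_i)}(k_i)$ to $\T_{\cov}^{\fock}(X)$ and then push it down by $\pi$.

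Take $\tilde b$ in the $\bo{K}_{\Bx}$-core of $\T_{\cov}^{\fock}(X)$, and let $\tilde c := \sum_i \wh{t}^{(p_i)}(k_i)$ computed in $\T_{\cov}^{\fock}(X)$, with $k_i$ defined from $\la'(\tilde b)$ as above. Then $\tilde b - \tilde c$ lies in the fixed point algebra and is killed by $\la'$. The kernel of $\la'$ meets the fixed point algebra trivially, because Fock covariance is defined by quotienting exactly by $\ker\la_\ast \cap [\T(X)]_{e_P}$. Concretely, $\T_{\cov}^{\fock}(X)$ is the full cross-sectional algebra of $\F\C X$ and $\la'$ is the regular representation, which is faithful on the unit fibre. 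Hence $\tilde b = \tilde c$. Applying $\pi$ gives $\pi(\tilde b) = \sum_i t^{(p_i)}(k_i)$, and the images $\pi(\tilde b)$ span $\bo{K}_{\Bx, t_\ast}$, which proves the second inclusion.

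The main subtlety is this last transfer step. One must check that $t^{(p)}$ is compatible with $\pi$ at the level of $\T(X)$, that is, $t_\ast \circ \wh{t}^{(p)} = t^{(p)}$. This holds on rank-one operators, and then on all of $\K(X_p)$ by linearity and continuity. One must also check that the fixed point algebra injects under $\la'$; I would cite \cite{KP25} or the reduced/full Fell bundle theory in \cite{Exe17} for this.
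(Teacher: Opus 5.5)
Your proposal is correct and follows the paper's proof: the fibrewise comparison of $b$ with $\sum_i \la^{(p_i)}(\ev_{p_i}(b))$ is exactly the paper's argument. Your lift to $\T_{\cov}^{\fock}(X)$, using that $\la'$ is faithful on the unit fibre, is just an unpacked form of the paper's application of the induced $*$-homomorphism $\dot{t}\colon [\F\C X]_{e_P} \to \ca(t)$. The only slip is calling $\pi$ equivariant, which need not hold since $t$ is not assumed equivariant here, but you never use it.
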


\begin{proof}
Disjointness of the union implies that the family $\{p_1, \dots, p_m\}$ is irredundant.
Indeed, if $p_i P \subseteq p_j P$ for some $i,j$, then $p_i \in p_i P \cap p_j P $ and thus $i=j$.
Hence Corollary~\ref{C:facompact} yields
\[
k_i := \ev_{p_i}(b) \in \K(X_{p_i})
\foral b \in \bo{K}_{\Bx, \la_\ast}, \ i = 1, \dots, m.
\]

Fix $b \in \bo{K}_{\Bx, \la_\ast}$ and set $c := \sum_{i=1}^m \la^{(p_i)}(k_i)$.
By item (i) of Lemma~\ref{L:fadiag} and Lemma~\ref{L:facptfibre}, both $b$ and $c$ leave every fibre $X_r$ invariant. Since $\F X = \sum^\oplus_{r \in P} X_r$ it therefore suffices to show that $\ev_r(b) = \ev_r(c)$ for every $r \in P$.

If $r \notin \Bx$, then $\ev_r(b) = 0$ by item (i) of Lemma~\ref{L:fadiag}, while $r \notin p_i P$ for every $i$ and hence $\ev_r(c) = 0$ by Lemma~\ref{L:facptfibre}.
If $r \in \Bx$, then by disjointness there is exactly one index $i$ with $r \in p_i P$.
Writing $r = p_i z$ with $z \in P$ we obtain
\[
\ev_r(c)
=
i_{p_i}^{p_i z} (k_i)
=
i_{p_i}^{p_i z} \left( \ev_{p_i}(b) \right)
=
\ev_{p_i z}(b)
=
\ev_r(b),
\]
where the first equality follows from Lemma~\ref{L:facptfibre} together with $\ev_r(\la^{(p_j)}(k_j)) = 0$ for $j \neq i$, and the third equality is item (ii) of Lemma~\ref{L:fadiag}.
Hence $b = c$, which proves the first assertion.

For the second assertion, let $\dot{t} \colon [\F\C X]_{e_P} \to \ca(t)$ be the canonically induced $*$-homomorphism by the Fock covariance of $t$.
Since $\la^{(p)}(\K (X_p))\subseteq [\F\C X]_{e_P}$ and $\bo{K}_{\bo{x}, \la_\ast} \subseteq [\F\C X]_{e_P}$, it follows that $\dot{t} \circ \la^{(p)} = t^{(p)}$ and $\dot{t}(\bo{K}_{\Bx, \la_\ast}) = \bo{K}_{\Bx, t_\ast}$.
Applying $\dot{t}$ to the first assertion finishes the proof.
\end{proof}

We will also need the following proposition.

\begin{proposition} \label{P:fadirect}
Let $P$ be a unital subsemigroup of a discrete group $G$ and let $X$ be a saturated product system over $P$.
Let $p_1, \dots, p_m \in P$ satisfy $p_i P \cap p_j P = \mt$ for $i \neq j$.
Then
\[
\la^{(p_i)}(\K(X_{p_i})) \cdot \la^{(p_j)}(\K(X_{p_j})) = (0)
\foral i \neq j.
\]
If in addition $t$ is an equivariant Fock covariant representation of $X$, then 
\[
t^{(p_i)}(\K(X_{p_i})) \cdot t^{(p_j)}(\K(X_{p_j})) = (0) 
\foral i \neq j,
\]
and consequently
\[
\sum_{i=1}^m t^{(p_i)}(\K(X_{p_i}))
=
\bigoplus_{i=1}^m t^{(p_i)}(\K(X_{p_i}))
\]
is a C*-subalgebra of $[\ca(t)]_{e_P}$.
\end{proposition}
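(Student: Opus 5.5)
The plan is to settle the Fock case first by a support computation on the fibres of $\F X$, and then transfer it to $t$ through the Fell bundle map $\dot{t}$ used in the proof of Proposition~\ref{P:facore}.

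\textbf{Fock case.} By linearity and continuity it suffices to treat rank-one operators $k_i = \theta^{X_{p_i}}_{\xi,\eta}$ and $k_j = \theta^{X_{p_j}}_{\ze,\om}$. Then
\[
\la^{(p_i)}(k_i)\,\la^{(p_j)}(k_j) = \la_{p_i}(\xi)\la_{p_i}(\eta)^*\la_{p_j}(\ze)\la_{p_j}(\om)^*.
\]
The middle factor $\la_{p_i}(\eta)^*\la_{p_j}(\ze)$ is a word whose constructible ideal is $p_j^{-1}p_iP = p_j^{-1}(p_iP\cap p_jP) = \mt$. By Proposition~\ref{P:fcmt} it therefore annihilates every $X_r$, so it is zero. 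Alternatively, Lemma~\ref{L:facptfibre} gives the same conclusion: $\la^{(p_j)}(k_j)$ maps $X_r$ into $X_r$ and vanishes off $p_jP$, while $\la^{(p_i)}(k_i)$ vanishes off $p_iP$. Since $p_iP\cap p_jP=\mt$, the product kills every fibre.

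\textbf{Passing to $t$.} Each $\la^{(p)}(\K(X_p))$ lies in $[\F\C X]_{e_P}$, the unit fibre of the Fock covariant bundle. Hence, exactly as in the proof of Proposition~\ref{P:facore}, Fock covariance gives a $*$-homomorphism $\dot t\colon[\F\C X]_{e_P}\to\ca(t)$ with $\dot t\circ\la^{(p)} = t^{(p)}$. This identification uses that $[\F\C X]_{e_P}$ is the fixed point algebra of $\T^{\fock}_{\cov}(X)$, and that the latter maps isometrically onto the fixed point algebra of its reduced C*-algebra $\T_\la(X)$. Applying $\dot t$ to the Fock identity yields $t^{(p_i)}(k_i)\,t^{(p_j)}(k_j) = 0$ for all $i\neq j$.

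A more robust route is to avoid $\la$ altogether. From the Toeplitz relations, $t_{p_i}(\eta)^*t_{p_j}(\ze)$ lies in $\bo{K}_{\mt,t_\ast}$, which is $(0)$ by \cite[Theorem 3.2]{KP25}. This even dispenses with equivariance for the orthogonality statement.

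\textbf{Direct sum.} Each $t^{(p_i)}(\K(X_{p_i}))$ is a C*-algebra, being the image of a C*-algebra under a $*$-homomorphism. It lies in $[\ca(t)]_{e_P}$ because its generators $t_{p_i}(\xi)t_{p_i}(\eta)^*$ have degree $p_ip_i^{-1}=e_P$, and equivariance guarantees that this fibre makes sense. The summands are pairwise orthogonal, that is $C_iC_j=0=C_jC_i$ for $i\ne j$ (the second identity by taking adjoints). Consequently
\[
\Big\|\sum_i c_i\Big\|^2 = \Big\|\Big(\sum_i c_i\Big)^*\Big(\sum_i c_i\Big)\Big\| = \Big\|\sum_i c_i^*c_i\Big\| = \max_i\|c_i\|^2,
\]
where the last equality holds because the $c_i^*c_i$ are mutually orthogonal positive elements. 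The map $\bigoplus_i C_i\to\sum_i C_i$ is therefore an isometric $*$-homomorphism, so the sum is closed and is a C*-subalgebra.

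The main obstacle is the middle step, namely making precise the vanishing $t_{p_i}(X_{p_i})^*t_{p_j}(X_{p_j})=0$ for a general Fock covariant $t$. I would rely on \cite[Theorem 3.2]{KP25}, which gives $\bo{K}_{\mt,t_\ast}=(0)$.
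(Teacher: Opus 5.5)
Your main argument is the paper's proof: the fibrewise support computation from Lemma~\ref{L:facptfibre}, the transfer to $t$ through $\dot t\colon[\F\C X]_{e_P}\to\ca(t)$ with $\dot t\circ\la^{(p)}=t^{(p)}$, and then orthogonality giving the direct sum. Your variant via Proposition~\ref{P:fcmt} on the word $\la_{p_i}(\eta)^*\la_{p_j}(\ze)$ is also fine, since that proposition holds for arbitrary words and not only neutral ones. Your norm computation for the sum just spells out what the paper leaves implicit. Because $\dot t$ already gives the vanishing for every Fock covariant $t$, there is no remaining ``main obstacle''.

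The step that is wrong as written is your ``more robust route''. The element $t_{p_i}(\eta)^*t_{p_j}(\ze)$ does not lie in $\bo{K}_{\mt,t_\ast}$. The $\bo{K}$-cores are spanned only by neutral words, and this word has degree $p_i^{-1}p_j\neq e_G$; disjointness of $p_iP$ and $p_jP$ forces $p_i\neq p_j$.

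The fix is to apply \cite[Theorem 3.2]{KP25} to a neutral word instead. The product $t_{p_i}(\xi)t_{p_i}(\eta)^*t_{p_j}(\ze)t_{p_j}(\om)^*$ itself is a word with $\eps=\eps'=0$ and degree $e_P$. Its ideal is $p_jp_j^{-1}p_ip_i^{-1}P=p_j\big(p_j^{-1}(p_iP)\big)=\mt$, so it lies in $\bo{K}_{\mt,t_\ast}=(0)$. Alternatively, for $x:=t_{p_i}(\eta)^*t_{p_j}(\ze)$ the neutral element $xx^*$ has ideal $p_i^{-1}p_jp_j^{-1}p_iP=\mt$. Hence $xx^*=0$, and so $x=0$.

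With this correction the route works and uses only Fock covariance. The same is true of the $\dot t$ argument, though: equivariance is needed only to place the sum inside $[\ca(t)]_{e_P}$, not for the orthogonality.
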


\begin{proof}
Fix $i \neq j$, and consider $k \in \K(X_{p_i})$ and $k' \in \K(X_{p_j})$.
Let also $\ze_r$ be in $X_r$ for some $r \in P$.
If $r \notin p_j P$, then Lemma~\ref{L:facptfibre} implies that $\la^{(p_j)}(k') \ze_r = 0$.
If $r \in p_j P$, then $\la^{(p_j)}(k') \ze_r \in X_r$ by the same lemma, while $r \notin p_i P$ by disjointness, and therefore $\la^{(p_i)}(k) \la^{(p_j)}(k') \ze_r = 0$.
Since $\F X = \sum^\oplus_{r \in P} X_r$ we conclude that $\la^{(p_i)}(k) \la^{(p_j)}(k') = 0$.

Now let $t$ be an equivariant Fock covariant representation of $X$ and let $\dot{t} \colon [\F\C X]_{e_P} \to \ca(t)$ be the canonically induced $*$-homomorphism so that $\dot{t} \circ \la^{(p)} = t^{(p)}$.
Applying $\dot{t}$ gives 
\[
t^{(p_i)}(k) t^{(p_j)}(k') = 0 \text{ for any }i \neq j.
\]
In particular the C*-algebras $t^{(p_i)}(\K(X_{p_i}))$ are mutually orthogonal C*-subalgebras of $\ca(t)$, and hence the sum $\sum_{i=1}^m t^{(p_i)}(\K(X_{p_i}))$ is a direct sum and a C*-subalgebra of $[\ca(t)]_{e_P}$ as each $t^{(p_i)}(\K(X_{p_i})) \subseteq [\ca(t)]_{e_P}$.
\end{proof}

\subsection{Nuclearity}

We now give the relevant results on nuclearity.

\begin{theorem} \label{T:fanuc}
Let $P$ be a unital strongly finitely aligned subsemigroup of an amenable discrete group $G$ and let $X$ be a compactly aligned saturated product system over $P$ with coefficient C*-algebra $A$.
Let $t$ be an injective equivariant Fock covariant representation of $X$.
Then the following are equivalent:
\begin{enumerate}
\item The embedding $A \hookrightarrow [\ca(t)]_{e_P}$ is nuclear.
\item The embedding $\bo{K}_{\Bx, t_\ast} \hookrightarrow [\ca(t)]_{e_P}$ is nuclear for every $\Bx \in \J$.
\item $\ca(t)$ is nuclear.
\end{enumerate}
\end{theorem}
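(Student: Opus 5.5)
The equivalence (ii) $\Leftrightarrow$ (iii) is already Theorem~\ref{T:Nuccores}, since $t$ is equivariant and $G$ is amenable. So the plan is to prove (iii) $\Rightarrow$ (i) and (i) $\Rightarrow$ (ii).

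For (iii) $\Rightarrow$ (i), Theorem~\ref{T:Nuccores} makes $[\ca(t)]_{e_P}$ nuclear. The inclusion $A \simeq t_{e_P}(A) \hookrightarrow [\ca(t)]_{e_P}$ is then the composition of a $*$-homomorphism with the nuclear map $\id_{[\ca(t)]_{e_P}}$, and hence is nuclear. Here $A$ is identified with $t_{e_P}(A)$ by injectivity of $t$.

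For (i) $\Rightarrow$ (ii), the case $\Bx = \mt$ is trivial, because $\bo{K}_{\mt,t_\ast} = (0)$ by Fock covariance \cite[Theorem 3.2]{KP25}. So fix $\mt \neq \Bx \in \J$. Proposition~\ref{P:sfa} lets us write $\Bx = \bigcup_{i=1}^m p_i P$ as a disjoint union. Proposition~\ref{P:facore} then gives
\[
\bo{K}_{\Bx,t_\ast} \subseteq \sum_{i=1}^m t^{(p_i)}(\K(X_{p_i})),
\]
and by Proposition~\ref{P:fadirect} the right-hand side is the orthogonal direct sum $\bigoplus_{i=1}^m t^{(p_i)}(\K(X_{p_i}))$, a C*-subalgebra $\D$ of $[\ca(t)]_{e_P}$. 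Restricting a nuclear map to a C*-subalgebra keeps it nuclear, and mutually orthogonal nuclear embeddings sum to a nuclear embedding. It therefore suffices to show that each $t^{(p)}(\K(X_p)) \hookrightarrow [\ca(t)]_{e_P}$ is nuclear.

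This last step is the main obstacle, because we only know that $t_{e_P}\colon A \to [\ca(t)]_{e_P}$ is nuclear as a map and not that $A$ is nuclear. The first thing I would try is the cited fact from \cite[Proposition B.8]{Kat04}: if $t_{e_P}$ is a nuclear map into $[\ca(t)]_{e_P}$, then so is $t^{(p)}\colon \K(X_p) \to [\ca(t)]_{e_P}$. The codomain is correct here, since $t_p(X_p)t_p(X_p)^* \subseteq [\ca(t)]_{e_P}$. Nuclearity of $t^{(p)}$ as a map out of $\K(X_p)$ then has to be converted into nuclearity of the inclusion of its image $t^{(p)}(\K(X_p))$. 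This is where injectivity enters: $t$ is injective, so $t^{(p)}$ is injective on $\K(X_p)$ and hence a $*$-isomorphism onto its image. Composing the nuclear $t^{(p)}$ with its inverse, the inclusion of the image is nuclear.

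If the appeal to \cite[Proposition B.8]{Kat04} needs to be checked directly, I would use a linking-algebra argument. Approximate $t_p(\xi)t_p(\eta)^*$ by writing $\xi = \xi' a$ via Cohen factorisation, so that $t^{(p)}(\theta_{\xi,\eta}) = t_p(\xi')\,t_{e_P}(a b^*)\,t_p(\eta')^*$. The factor $t_{e_P}$ is nuclear, with maps $\phi\colon A \to M_n$ and $\psi\colon M_n \to [\ca(t)]_{e_P}$, and these are then conjugated by rows of $t_p(\xi'_k)$. This compression remains inside $[\ca(t)]_{e_P}$ and gives the required finite-dimensional factorisation.
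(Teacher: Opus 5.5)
Your proposal is correct and follows the paper's proof of (i) $\Rightarrow$ (ii) essentially verbatim: the disjoint decomposition via Proposition~\ref{P:sfa}, containment in the orthogonal sum via Propositions~\ref{P:facore} and~\ref{P:fadirect}, and \cite[Proposition B.8]{Kat04} together with injectivity of $t^{(p)}$ for each summand, with (ii) $\Leftrightarrow$ (iii) taken from Theorem~\ref{T:Nuccores}. The only difference is that you close the cycle through (iii) $\Rightarrow$ (i) using nuclearity of $[\ca(t)]_{e_P}$, whereas the paper proves (ii) $\Rightarrow$ (i) directly from $t_{e_P}(A) \subseteq \bo{K}_{P,t_\ast}$; both steps are immediate.
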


\begin{proof}
\noindent 
[(ii) $\Leftrightarrow$ (iii)].
This follows from Theorem~\ref{T:Nuccores}.

\smallskip
\noindent
[(ii) $\Rightarrow$ (i)].
Taking $n = 1$, $p_1 = q_1 = e_P$ and $\eps = \eps' = 1$ in the definition of the $\bo{K}$-cores we get $p_1^{-1} q_1 = e_P$ and $q_1^{-1} p_1 P = P$, and therefore
\[
t_{e_P}(A) = \left[ t_{e_P}(A)^* t_{e_P}(A) \right] \subseteq
\bo{K}_{P, t_\ast}.
\]
Since $t$ is injective we may identify $A$ with $t_{e_P}(A)$, and the inclusion of $A$ in $[\ca(t)]_{e_P}$ is nuclear, since it factors as 
$A \hookrightarrow \bo{K}_{P, t_\ast} \hookrightarrow [\ca(t)]_{e_P}$.

\smallskip
\noindent
[(i) $\Rightarrow$ (ii)].
Fix $\Bx \in \J$.
If $\Bx=\mt$, then $\bo{K}_{\mt,t_\ast}=(0)$, and there is nothing to prove.
So suppose that $\Bx\neq \mt$.
Since $P$ is strongly finitely aligned, by Proposition~\ref{P:sfa} we may write
\[
\Bx = \bigcup_{i=1}^m p_i P \text{ with } p_i P \cap p_j P = \mt \foral i \neq j.
\]
Set $\C := \sum_{i=1}^m t^{(p_i)}(\K(X_{p_i}))$.
Since $t$ is injective, we get that $t^{(p)}$ is isometric for all $p\in P$, and hence nuclear embeddability of $t_{e_P}(A)$ in $[\ca(t)]_{e_P}$ implies nuclear embeddability of each $t^{(p_i)}(\K(X_{p_i}))$ in $[\ca(t)]_{e_P}$ by \cite[Proposition B.8]{Kat04}.
By Proposition~\ref{P:fadirect} the $t^{(p_i)}(\K(X_{p_i}))$ are mutually orthogonal C*-subalgebras of $[\ca(t)]_{e_P}$, and thus in turn $\C$ embeds nuclearly in $[\ca(t)]_{e_P}$.
By Proposition~\ref{P:facore} we have that $\bo{K}_{\Bx, t_\ast}$ is a C*-subalgebra of $\C$ and hence the inclusion map of $\bo{K}_{\Bx, t_\ast}$ in $[\ca(t)]_{e_P}$ is nuclear since it factors as
$\bo{K}_{\Bx, t_\ast} \hookrightarrow \C \hookrightarrow [\ca(t)]_{e_P}$.
\end{proof}

We obtain the following corollary.

\begin{corollary} \label{C:fanuc}
Let $P$ be a unital strongly finitely aligned subsemigroup of an amenable discrete group $G$ and let $X$ be a compactly aligned saturated product system over $P$ with coefficient C*-algebra $A$.
Then $\T_\la(X)$ is nuclear if and only if $A$ is nuclear.
\end{corollary}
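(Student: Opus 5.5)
We apply Theorem~\ref{T:fanuc} to the Fock representation $t = \la$. The Fock representation is injective, equivariant (with a normal coaction) and Fock covariant. Since $G$ is amenable, $\T_\la(X) \simeq \T_{\cov}^{\fock}(X)$, so $\la$ is a legitimate choice of $t$ in that theorem. Hence $\T_\la(X)$ is nuclear if and only if the embedding $A \hookrightarrow [\T_\la(X)]_{e_P}$ is nuclear. It therefore suffices to show that this embedding is nuclear exactly when $A$ is nuclear.

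\emph{If $A$ is nuclear.} The identity $\id_A$ is a nuclear map. Composing it with the injective $*$-homomorphism $\la_{e_P} \colon A \to [\T_\la(X)]_{e_P}$ gives a nuclear map, since the composition of a nuclear map with a completely contractive completely positive map is nuclear. Thus $A$ is nuclearly embedded, and Theorem~\ref{T:fanuc} gives that $\T_\la(X)$ is nuclear.

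\emph{If $\T_\la(X)$ is nuclear.} The converse needs a map back onto $A$, and this is the only step with real content. Consider the compression to the $e_P$-summand of the Fock space,
\[
\ev_{e_P} \colon \L(\F X) \to \L(X_{e_P}) = \L(A); \ T \mapsto Q_{e_P} T|_{X_{e_P}}.
\]
This map is completely positive and completely contractive. For $a \in A$ the operator $\la_{e_P}(a)$ leaves each fibre invariant and acts on $X_{e_P} = A$ by left multiplication. Hence $\ev_{e_P} \circ \la_{e_P}$ is the left-multiplication embedding $A \hookrightarrow \K(A) \subseteq \L(A)$.

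Now take nets $\phi_\la \colon \T_\la(X) \to M_{k_\la}(\bC)$ and $\psi_\la \colon M_{k_\la}(\bC) \to \T_\la(X)$ witnessing nuclearity of $\T_\la(X)$, and compose:
\[
\phi_\la \circ \la_{e_P} \qand \ev_{e_P} \circ \psi_\la .
\]
This shows that the embedding $A \hookrightarrow \L(A)$ is nuclear. The target is $\L(A) = M(A)$, not $A$ itself, so we cannot conclude directly.

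To bring the maps back into $A$, fix an approximate unit $(u_\mu)$ of $A$ and consider the completely positive contractions $T \mapsto u_\mu T u_\mu$, which map $\L(A)$ into $\K(A) \simeq A$. Composing $\ev_{e_P} \circ \psi_\la$ with these compressions and passing to a diagonal net over the pairs $(\la, \mu)$ gives completely contractive completely positive maps into $A$. Their composite with $\phi_\la \circ \la_{e_P}$ approximates $u_\mu a u_\mu$, which converges to $a$. Hence $\id_A$ is nuclear, and $A$ is nuclear.

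Alternatively, one can bypass the compression argument by citing the standard fact that $A$ is a hereditary subalgebra (a corner) of $\T_\la(X)$. Indeed, $\la_{e_P}(A)$ equals $Q_{e_P}$ times the algebra, up to the unitisation, and nuclearity passes to hereditary subalgebras. The compression argument above is the self-contained route.
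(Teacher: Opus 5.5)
Your proof is correct. The direction from nuclearity of $A$ to nuclearity of $\T_\la(X)$ is the paper's argument: apply Theorem~\ref{T:fanuc} to the Fock representation.

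For the converse the paper takes a shorter path. It observes that $\ev_{e_P}$ already maps $\T_\la(X)$ into $A$, identified with $\K(A)$ via left multiplication. Indeed, by Proposition~\ref{P:fcmt} a word in the generators of degree $g \in G$ sends $X_{e_P}$ into $X_g$. So $Q_{e_P}$ annihilates it unless $g = e_P$, in which case it acts on $X_{e_P} = A$ as left multiplication by an element of $A$. Hence $\id_A = \ev_{e_P}|_{\T_\la(X)} \circ \la_{e_P}$ factors through the nuclear map $\la_{e_P}$ and is nuclear at once.

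You do not use this range information. You only use that $\ev_{e_P}$ is a completely contractive completely positive map into $\L(A) = M(A)$ that restricts to the left-multiplication embedding on $A$. You then push the target back into $\K(A) \simeq A$ with compressions by an approximate unit and a diagonal net. That is valid. In effect you prove that nuclearity of the inclusion $A \hookrightarrow M(A)$ forces nuclearity of $A$. This saves the computation on words at the cost of one extra approximation.

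Drop the final ``alternative'' paragraph, since its claim is false. The operator $\la_{e_P}(a)$ acts by left multiplication on every fibre $X_r$, not only on $X_{e_P}$. So $\la_{e_P}(A)$ is not the compression of $\T_\la(X)$ to $X_{e_P}$, and it is in general not hereditary. For $P = \bZ_+$ and $X_n = \bC$ one gets $\la_0(A) = \bC I$ inside the Toeplitz algebra, while $0 \leq SS^* \leq I$ for the unilateral shift $S$ and $SS^* \notin \bC I$.

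What is true is that the compression of $\T_\la(X)$ to $X_{e_P}$ is $\ev_{e_P}(\T_\la(X)) = \K(A) \simeq A$. However, this is a corner of $\T_\la(X)$ only if the projection of the Fock space onto $X_{e_P}$ multiplies $\T_\la(X)$ into itself, and that can fail. Take $P = \bZ_+$, $A = \bC$ and $X_1$ an infinite-dimensional Hilbert space. Then $\T_\la(X) \simeq \mathcal{O}_\infty$ is simple and unital, so it contains no nonzero compact operator, and hence not the rank-one vacuum projection. Your compression argument does not rely on this remark, so the proof stands without it.
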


\begin{proof}
\noindent
The Fock representation is injective, equivariant and Fock covariant.
If $A$ is nuclear, then $\la_{e_P}(A)$ is a nuclear C*-subalgebra of $[\T_\la(X)]_{e_P}$, hence nuclearly embedded in it, and Theorem~\ref{T:fanuc} applies.

For the converse, consider the completely contractive completely positive map
\[
\ev_{e_P} \colon \L(\F X) \to \L(X_{e_P}) = \L(A);\
c \mapsto Q_{e_P} c |_{X_{e_P}}.
\]
It is easy to check that $\ev_{e_P}$ restricts to a completely contractive completely positive map $\ev_{e_P}|_{\T_\la(X)} \colon \T_\la(X) \to A$ satisfying $\ev_{e_P}|_{\T_\la(X)} \circ \la_{e_P} = \id_A$.
If $\T_\la(X)$ is nuclear, then evidently $\la_{e_P} \colon A \to \T_\la(X)$ is nuclear.
Hence we have that $\id_A = \ev_{e_P}|_{\T_\la(X)} \circ \la_{e_P}$ is nuclear, as required.
\end{proof}

\section{Beyond compactly aligned product systems}\label{S:beyond}

We will see that compact alignment cannot be dropped from Theorem~\ref{T:faexact} and Theorem~\ref{T:fanuc}, even when $P = \bZ_+^2$ (hence a right LCM semigroup) and $A = \bC$.
In particular, exactness or nuclear embeddability of the coefficient C*-algebra is not enough to guarantee exactness or nuclearity for an injective equivariant Fock covariant representation without extra assumptions on the product system or the semigroup.

We first recall the following construction of a product system over $\bZ_+^2$.
This is an instance of the construction of product systems over right-angled Artin semigroups by Fowler--Sims \cite{FS02}; for the convenience of the reader we include some of the details.
 
Let $E$ and $F$ be C*-correspondences over a C*-algebra $A$ with the property that there is a unitary map
\[
u \colon F \otimes_A E \to E \otimes_A F.
\]
Consider the C*-correspondences over $A$ given by
\[
X_{(n,m)}
=
\begin{cases}
A & \textup{if } n = 0 \textup{ and } m = 0,\\
E^{\otimes n} & \textup{if } n \geq 1 \textup{ and } m = 0,\\
F^{\otimes m} & \textup{if } n = 0 \textup{ and } m \geq 1,\\
E^{\otimes n} \otimes_A F^{\otimes m} & \textup{if } n \geq 1 \textup{ and } m \geq 1.
\end{cases}
\]
The unitary maps $u_{p,q}$ for $p,q \in \bZ_+^2$ are given as follows.
If $p=(0,0)$ or $q=(0,0)$, then the maps come from the left or the right action by $A$.
In all other cases we construct the maps iteratively to satisfy the associativity.
For the first step we let
\[
u_{(1,0), (0,1)} := \id_{E} \otimes \id_{F}
\AND
u_{(0,1), (1,0)} := u.
\]
For the next steps, first we set
\[
u_{(n,0), (0,m)} := \id_{E^{\otimes n}} \otimes \id_{F^{\otimes m}}, \foral n, m \geq 1.
\]
Furthermore, for $n \geq 1$ and the map
\[
u_{(0,1), (n,0)} \colon F \otimes_A E^{\otimes n} \to E^{\otimes n} \otimes_A F,
\]
we set
\[
u_{(0,1), (n,0)} := (\id_{E^{\otimes (n-1)}} \otimes u) \circ \cdots \circ (\id_E \otimes u \otimes \id_{E^{\otimes (n-2)}}) \circ (u \otimes \id_{E^{\otimes (n-1)}}),
\]
where the $j$-th factor from the right is $\id_{E^{\otimes (j-1)}} \otimes u \otimes \id_{E^{\otimes (n-j)}}$ for $j = 1, \dots, n$; for $n = 1$ this reads $u_{(0,1),(1,0)} = u$.
Next for $n, m \geq 1$ and the map
\[
u_{(0,m),(n,0)} \colon F^{\otimes m} \otimes_A E^{\otimes n} \to E^{\otimes n} \otimes_A F^{\otimes m}
\]
we set
\begin{align*}
u_{(0,m),(n,0)}
:= {} &
(u_{(0,1), (n,0)} \otimes \id_{F^{\otimes (m-1)}})
\circ \cdots \circ
(\id_{F^{\otimes (m-1)}} \otimes u_{(0,1), (n,0)}),
\end{align*}
where the $j$-th factor from the right is $\id_{F^{\otimes (m-j)}} \otimes u_{(0,1),(n,0)} \otimes \id_{F^{\otimes (j-1)}}$ for $j = 1, \dots, m$; for $m = 1$ this reads $u_{(0,1),(n,0)}$.
Finally, for $n_1, n_2, m_1, m_2 \geq 1$ and the maps
\[
u_{(n_1,m_1), (n_2,m_2)} \colon (E^{\otimes n_1} \otimes_A F^{\otimes m_1}) \otimes_A (E^{\otimes n_2} \otimes_A F^{\otimes m_2}) \to E^{\otimes (n_1 + n_2)} \otimes_A F^{\otimes (m_1+m_2)}
\]
we set
\[
u_{(n_1,m_1), (n_2,m_2)} := \id_{E^{\otimes n_1}} \otimes u_{(0,m_1),(n_2,0)} \otimes \id_{F^{\otimes m_2}}.
\]
In order to deduce that the diagrams
\[
\xymatrix{
X_p \otimes_A X_q \otimes_A X_r \ar[rr]^{\id_{X_p} \otimes u_{q,r}} \ar[d]_{u_{p,q} \otimes \id_{X_r}} & & X_p \otimes_A X_{q+r} \ar[d]^{u_{p,q+r}} \\
X_{p+q} \otimes_A X_r \ar[rr]^{u_{p+q,r}} & & X_{p+q+r}
}
\]
are commutative for all $p,q,r \in \bZ_+^2$, it suffices to show this for $p,q,r \in \{(1,0), (0,1)\}$ as by associativity of $\otimes_A$ each diagram breaks into smaller ones on the generators of $\bZ_+^2$.
This follows by the definition of the unitaries.
 
\begin{example}\label{E:nonexact}
Below we obtain a product system over $\bZ_+^2$ (hence over a right LCM semigroup) that is not compactly aligned, with $A = \bC$ (hence exact), and we will have an $\Bx \in \J$ so that $\bo{K}_{\Bx, \la_\ast}$ contains a copy of $\ca(\bF_2)$ (hence a non-exact C*-subalgebra).
This will show that compact alignment cannot be dropped from Theorem~\ref{T:faexact}, and as nuclearity implies exactness, nor can it be dropped from Theorem~\ref{T:fanuc}.
In addition, this example shows that the assumption of the left actions containing the compacts is also necessary for \cite[Theorem 5.4, Theorem 5.6]{KLS25}.

Let $H$ be a separable infinite-dimensional Hilbert space with orthonormal basis $\{h_j\}_{j \in \bN}$.
Set $E = F := H$ and let $u \colon F \otimes E \to E \otimes F$ be any unitary; then $u$ is automatically a unitary for the C*-correspondences over $\bC$, and $E$ and $F$ are non-degenerate.
We consider the saturated product system $X$ over $\bZ_+^2$ with coefficients in $\bC$, such that
\[
X_{(1,0)} = E, \quad X_{(0,1)} = F, \quad X_{(1,1)} = E \otimes F, \quad
u_{(1,0),(0,1)} = \id_{E \otimes F}, \quad u_{(0,1),(1,0)} = u.
\]
For $e \in E$ and $f \in F$ consider 
\[
L_e \colon F \to E \otimes F;\ L_e f' = e \otimes f'
\qand 
M_f \colon E \to F \otimes E ;\ M_f e' = f \otimes e'.
\]
By Lemma~\ref{L:facptfibre} we have
\[
\ev_{(1,1)}\big(\la^{(1,0)}(k)\big) = k \otimes \id_F
\foral k \in \K(E),
\]
and
\[
\ev_{(1,1)}\big(\la^{(0,1)}(\theta^{F}_{f,f'})\big) 
= u (\theta^F_{f,f'} \otimes \id_E) u^*
= u M_f M_{f'}^* u^*
\foral f,f' \in F.
\]
 
Consider the constructible ideal
\[
\Bx := (1,1) + \bZ^2_+ = ((1,0)+\bZ^2_+) \cap ((0,1)+\bZ^2_+).
\]
Since $\la$ is a representation of $X$ we have
\[
\la_{(1,0)}(h_1)^*\la_{(1,0)}(h_1)=\la_{(0,0)}(\sca{h_1,h_1})=1,
\]
and thus the element
\[
Q := \la^{(1,0)}(\theta^E_{h_1,h_1}) = \la_{(1,0)}(h_1) \la_{(1,0)}(h_1)^* \in \bo{K}_{(1,0)+\bZ^2_+, \la_\ast}
\]
is a projection.
For $h_i, h_j \in F$ we define
\[
c_{ij} := Q \, \la^{(0,1)}(\theta^F_{h_i,h_j}) \, Q,
\]
for which we get
\[
c_{ij} = Q \, \la^{(0,1)}(\theta^F_{h_i,h_j}) \, Q \in \bo{K}_{(1,0)+\bZ^2_+, \la_\ast} \cdot \bo{K}_{(0,1)+\bZ^2_+, \la_\ast} \cdot \bo{K}_{(1,0)+\bZ^2_+, \la_\ast}\subseteq \bo{K}_{\Bx,\la_\ast}.
\]
Since $Q$ is a projection we have
\[
c_{ij}=Qc_{ij}Q \in Q \bo{K}_{\Bx,\la_\ast} Q.
\]
Moreover, since $Q \in \bo{K}_{(1,0)+\bZ^2_+, \la_\ast}$ and $((1,0)+\bZ^2_+) \cap \Bx = \Bx$, we have
\[
Q \bo{K}_{\Bx,\la_\ast} Q
\subseteq 
\bo{K}_{(1,0)+\bZ^2_+, \la_\ast} \cdot \bo{K}_{\Bx,\la_\ast} \cdot \bo{K}_{(1,0)+\bZ^2_+, \la_\ast} \subseteq \bo{K}_{\Bx,\la_\ast},
\]
and therefore
\[
Q \bo{K}_{\Bx,\la_\ast} Q = \{ c \in \bo{K}_{\Bx,\la_\ast} \mid c = QcQ \}.
\]
It follows that $Q \bo{K}_{\Bx,\la_\ast} Q$ is a C*-subalgebra of $\bo{K}_{\Bx,\la_\ast}$.
Since $\Bx = (1,1)+\bZ^2_+$ is singly generated, Proposition~\ref{P:fafibres} implies that the map
\[
\ev_{(1,1)}|_{\bo{K}_{\Bx,\la_\ast}} \colon \bo{K}_{\Bx,\la_\ast} \to \L(X_{(1,1)}) = \L(E \otimes F),
\]
is an injective $*$-homomorphism.
 
Henceforth we fix the isometry
\[
V := L_{h_1} \colon F \to E \otimes F;\ Vh = h_1 \otimes h.
\]
For its range projection $VV^*$ we have
\[
V V^* = \theta^E_{h_1,h_1} \otimes \id_F= \ev_{(1,1)}(Q).
\]
Since $\ev_{(1,1)}$ is a unital completely positive map on $\L(\F X)$ and the operators $Q$ and $\ev_{(1,1)}(Q)$ are projections, we get
\[
\ev_{(1,1)}(Q^* Q)= \ev_{(1,1)}(Q) = \ev_{(1,1)}(Q)^* \ev_{(1,1)}(Q),
\]
and
\[
\ev_{(1,1)}(Q Q^*)= \ev_{(1,1)}(Q) = \ev_{(1,1)}(Q) \ev_{(1,1)}(Q)^*.
\]
Hence $Q$ lies in the multiplicative domain of $\ev_{(1,1)}$.
Therefore, for every $c \in Q \bo{K}_{\Bx,\la_\ast} Q$ we have
\[
\ev_{(1,1)}(c) =\ev_{(1,1)}(QcQ) = \ev_{(1,1)}(Q)\, \ev_{(1,1)}(c)\, \ev_{(1,1)}(Q) = V V^* \ev_{(1,1)}(c)\, V V^*,
\]
and so $\ev_{(1,1)}(c) = V \Psi(c) V^*$, where
\[
\Psi \colon Q \bo{K}_{\Bx,\la_\ast} Q \to \B(F);\ 
c \mapsto V^* \ev_{(1,1)}(c) V.
\]
A direct calculation for $c \in Q \bo{K}_{\Bx,\la_\ast} Q$ gives
\begin{align*}
VV^* \ev_{(1,1)}(c) 
& = \ev_{(1,1)}(Q) \ev_{(1,1)}(c) 
= \ev_{(1,1)}(c) \\
& = \ev_{(1,1)}(c) \ev_{(1,1)}(Q) 
= \ev_{(1,1)}(c) V V^*,
\end{align*}
and so $\Psi$ is a $*$-homomorphism.
Finally, $\Psi$ is injective since $\ev_{(1,1)}(c) = V \Psi(c) V^*$, and $\ev_{(1,1)}$ is injective on $\bo{K}_{\Bx,\la_\ast} \supseteq Q \bo{K}_{\Bx,\la_\ast} Q$ by Proposition~\ref{P:fafibres}.
We conclude that $\Psi$ is an injective $*$-homomor\-phism which satisfies
\begin{equation}\label{Eq:psi}
\Psi(c_{ij}) 
= V^* \ev_{(1,1)}(c_{ij})V
= V^* u M_{h_i} M_{h_j}^* u^* V
= (V^* u M_{h_i})(V^* u M_{h_j})^*.
\end{equation}
 
The above hold for any separable infinite-dimensional Hilbert space $H$ and any unitary operator $u \colon F \otimes E \to E \otimes F$, which we now specify.
Consider the group C*-algebra $\ca(\bF_2)$ of the free group on two generators.
Since $\ca(\bF_2)$ is separable we may pick a separable infinite-dimensional Hilbert space $H$ so that we get a unital and faithful $*$-representation $\ca(\bF_2) \subseteq \B(H)$.
 
Let the unitaries $W_2, W_3 \in \B(H)$ be the images of the free generators under this representation, and put $W_1 := I_H$ so that 
\[
\ca(W_2, W_3) = \ca(W_1,W_2, W_3) \simeq \ca(\bF_2).
\]
Set $E = F := H$ and define
\[
R := \tfrac{1}{\sqrt 3} \sum_{j=1}^3 W_j M_{h_j}^* \colon F \otimes E \to F.
\]
Since $M_{h_i}^* M_{h_j} = \de_{i,j} \id_E$, we get $RR^* = \id_F$ and 
\[
R M_{h_j} = W_j/\sqrt{3}
\textup{ for }
j = 1,2,3.
\]
As $R$ is a co-isometry we have 
\[
\ker R = (1 - R^*R)(F \otimes E).
\]
The spaces $\ker R$ (which contains the ranges of $M_{h_j}$ for $j \geq 4$) and $(V F)^\perp$ (which contains the ranges of $L_{h_j}$ for $j \geq 2$) are separable and infinite-dimensional.
Choose a unitary 
\[
Z_0 \colon \ker R \to (VF)^\perp
\]
and extend it to a partial isometry
\[
Z \colon F \otimes E \to E \otimes F;\ Z|_{(\ker R)^\perp} = 0 \textup{ and } Z|_{\ker R} = Z_0.
\]
Hence $Z$ satisfies
\begin{align*}
Z^*Z = P_{\ker R} = \id_{F \otimes E} - R^*R & \qand
RZ^* = R Z^*ZZ^* = R (\id_{F \otimes E} - R^*R) Z^* = 0, \\
ZZ^* = P_{(VF)^\perp} = \id_{E \otimes F} - VV^* & \qand
Z^*V = Z^* Z Z^* V = Z^* (\id_{E \otimes F} - VV^*) V = 0.
\end{align*}
 
We now define the operator
\[
u := V R + Z \colon F \otimes E \to E \otimes F.
\]
On the one hand we have
\begin{align*}
u^*u 
& = (R^*V^* + Z^*)(VR + Z) 
= R^*V^*VR + R^*V^*Z + Z^*VR + Z^*Z \\
& = R^*R + 0 + 0 + (\id_{F \otimes E} - R^*R) = \id_{F \otimes E}.
\end{align*}
On the other hand we have
\begin{align*}
uu^*
& = (VR + Z) (R^*V^* + Z^*) 
= VRR^*V^* + VRZ^* + ZR^*V^* + ZZ^* \\
& = 
VV^* + 0 + 0 + (\id_{E \otimes F} - VV^*) = \id_{E \otimes F},
\end{align*}
and thus $u$ is a unitary.
 
Moreover we have $V^*u = V^*VR + V^*Z = R$, and applying this to \eqref{Eq:psi} we get
\[
\Psi(c_{ij})
= (V^* u M_{h_i}) (V^* u M_{h_j})^*
= (R M_{h_i}) (R M_{h_j})^*.
\]
For $i,j \in \{1,2,3\}$ we obtain
\[
\Psi(c_{ij}) = \frac{1}{3} W_i W_j^*.
\]
Since $W_1 = I_H$ and $\Psi$ is linear, we conclude that
\[
\Psi(3 c_{21}) = W_2 \qand \Psi(3 c_{31}) = W_3.
\]
Since $\Psi$ is a $*$-homomorphism it has closed range, and hence 
\[
\Psi(Q \bo{K}_{\Bx,\la_\ast} Q) \supseteq \ca(W_2,W_3) \simeq \ca(\bF_2),
\]
which is not exact, see \cite{Was76, Was78, Was90}.
As $\Psi$ is injective and exactness passes to C*-subalgebras, $\bo{K}_{\Bx,\la_\ast}$ is not exact.
In particular, the Fock C*-algebra $\T_\la(X)$ is not exact, and hence also not nuclear, even though the coefficient C*-algebra $A=\bC$ is nuclear.
 
In accordance with Theorem~\ref{T:faexact} and Theorem~\ref{T:fanuc}, the product system $X$ is not compactly aligned.
The element $(1,1)$ is the unique minimal common right multiple of $(1,0)$ and $(0,1)$.
Since $u_{(1,0),(0,1)} = \id_{E \otimes F}$ and $u_{(0,1),(1,0)} = u$, we have
\[
i_{(1,0)}^{(1,1)}(\theta^E_{h_1,h_1}) = \theta^E_{h_1,h_1} \otimes \id_F = V V^*
\qand
i_{(0,1)}^{(1,1)}(\theta^F_{h_2,h_1}) = u M_{h_2} M_{h_1}^* u^*.
\]
Set 
\[
T := i_{(1,0)}^{(1,1)}(\theta^E_{h_1,h_1}) \, i_{(0,1)}^{(1,1)}(\theta^F_{h_2,h_1}).
\]
Using $V^* V = \id_F$, $V^* u = R$ and $R M_{h_j} = \frac{1}{\sqrt 3} W_j$, we obtain
\[
T V = V (R M_{h_2}) (R M_{h_1})^* = \tfrac{1}{3} V W_2 W_1^* = \tfrac{1}{3} V W_2.
\]
If $T \in \K(X_{(1,1)})$, then $T V \in \K(F, X_{(1,1)})$ and therefore
$(TV)^*(TV) \in \K(F)$.
However 
\[
(TV)^*(TV) = \frac{1}{9} W_2^* V^* V W_2 = \frac{1}{9} \id_F \notin \K(F),
\]
as $F$ is infinite-dimensional.
Hence $T \notin \K(X_{(1,1)})$, and so $X$ is not compactly aligned.
\end{example}

\subsection*{Acknowledgements}

The authors would like to thank Mihalis Anoussis and Aristides Katavolos for their comments and remarks on a draft of the manuscript.

The second author acknowledges that this research work was supported by the Hellenic Foundation for Research and Innovation (HFRI) under the 5th Call for HFRI PhD Fellowships (Fellowship Number: 19145).

This material is based upon work supported by the Swedish Research Council under grant no.\ 2021-06594 while the authors were in residence at Institut Mittag-Leffler in Djursholm, Sweden during the first semester of 2026.
The authors would like to thank the Institut Mittag-Leffler and the organisers of the program ``Operator Algebras and Quantum Information" for the hospitality.

\subsection*{Conflict of interest}

The authors declare that they have no conflict of interest.


\end{document}